\theoremstyle{plain}
\DeclareMathAlphabet{\mathpzc}{OT1}{pzc}{m}{it}
\newtheorem{theorem}{Theorem}
\newtheorem{prop}[theorem]{Proposition}
\newtheorem{lem}[theorem]{Lemma}
\newtheorem{conj}[theorem]{Conjecture}
\newtheorem{que}[theorem]{Question}
\newcommand{\hocolim}{\operatornamewithlimits{\mathrm{hocolim}}}
\newcommand{\colim}{\operatornamewithlimits{\mathrm{colim}}}
\title{Homotopy type of the independence complex of some categorical products of graphs}
\author{Omar Antolín Camarena, Andrés Carnero Bravo}
\begin{document}
\maketitle
\begin{abstract}
    It was conjectured by Goyal, Shukla and Singh 
    that the independence complex of the categorical product $K_2\times K_3\times K_n$ has 
    the homotopy type of a wedge of $(n-1)(3n-2)$ spheres of dimension $3$. Here we prove this conjecture by calculating the homotopy 
    type of the independence complex of the graphs $C_{3r}\times K_n$ and $K_2\times K_m\times K_n$. 
    For $C_m \times K_n$ when $m$ is not a multiple of $3$, we calculate the homotopy type for $m = 4, 5$ and show that for other values it has to have 
    the homotopy type of a wedge of spheres of at most $2$ consecutive dimensions and maybe some Moore spaces.
\end{abstract}
\tableofcontents
\section{Introduction}
The independence complex of a graph has been studied extensively and in particular for the family of graphs $K_n\oblong K_m$ there has been  
plenty of work on it without calling it the independence complex of this graph. 
This complex is know as the chessboard complex and it is know that for certain 
pairs $(n,m)$, the integral homology of $I(K_n\oblong K_m)$ has torsion (see\citep{MR2022345}). 
One can ask: what about the independence complex for other graphs products? 
In \citep{homotopygoyal} the authors prove that $I(K_n\times K_m)$ is homotopy equivalent to a wedge of circles and conjectured that
$$I\left(K_2\times K_3\times K_n\right)\simeq\bigvee_{(n-1)(3n-2)}\mathbb{S}^3$$
Here we prove and generalize this by calculating the homotopy type of the independence complex of the following families of graphs:
$C_4\times K_n$, $C_5\times K_n$ and  $C_{3r}\times K_n$ for all $r,n$; and
$K_2\times K_n\times K_m$ for all $n,m$.

For this we show a result which is of interest by itself:
We give the homotopy type of the union of complexes $K_1,\dots,K_n$ such that each $K_i$ is either contractible or homotopy equivalent to a wedge of spheres of dimension not less than 
$r$ and for any non-empty subset $S$ of $\underline{n}$, $\displaystyle\bigcap_{i\in S}K_i=K_S$ is either contractible or
homotopy equivalent to a wedge of spheres of some dimension $r_{|S|}$, where $r_2\leq r-1$ and $r_{i+1}=r_i-1$.
    
\section{Preliminaries}
All graphs are simple, no loops or multiedges. For a graph $G$, $V(G)$ is its vertex set and $E(G)$ 
its edge set. For a vertex $v$, $N_G(v)=\{u\in V(G):\;uv\in E(G)\}$ is its open neighbourhood 
and $N_G[v]=N_G(G)\cup\{v\}$ its closed neighborhood, 
we omit the subindex $G$ if there is no risk of confusion.
$K_n$ is the complete graph with vertex set $\{1,\dots,n\}$ and edge set $\{\{i,j\}:\; i\neq j\}$. 
$C_n$ is the cycle of length $n\geq3$ with vertex set $\{u_1,\dots,u_n\}$ and edge set 
$\{u_1u_2,\dots,u_{n-1}u_n,u_nu_1\}$.
The \textit{categorical product} of the graphs $G$ and $H$ is the graph $G\times H$ with vertex set 
$V(G)\times V(H)$ and edge set 
$$\{\{(u_1,v_1),(u_2,v_2)\}:\;\{u_1,u_2\}\in E(G)\wedge\{v_1,v_2\}\in E(H)\}$$

A simplicial complex $K$ is a family of non-empty subsets of an non-empty finite set $V(K)$, the vertices 
of the complex, such that:
\begin{enumerate}
\item for all $v\in V(K)$, $\{v\}\in K$;
\item if $\emptyset\neq \tau \subseteq \sigma$ and $\sigma\in K$, then $\tau\in K$.
\end{enumerate}
Given a simplicial complex $K$ and a simplex $\sigma$, the \textit{link} of $\sigma$ is the subcomplex 
$lk(\sigma)=\{\tau\in K:\;\tau\cap\sigma=\emptyset\;\wedge\;\tau\cup\sigma\in K\}$ and its \textit{star} is
$st(\sigma)=\{\tau\in K:\tau\cup\sigma\in K\}$. For a vertex we will write $lk(v)$ and $st(v)$ insted of $lk(\{v\})$ or 
$st(\{v\})$.

We will not distinguish between a complex and its geometric realization.

A non empty vertex set $S$ of a graph is independent if any two vertices are not adjacent. 
Give a graph $G$ its 
independence complex is the complex given by its independent sets
$$I(G)=\{\sigma\subseteq V(G):\;\sigma\mbox{ is independent}\}$$

\subsection{Homotopy theory tools}
Now we give the tools we will use throughout the paper for calculating homotopy types of independence complexes.
\begin{lem}\citep{engstrom09}\label{vecindad}
If $N(u)\subseteq N(v)$, then $I(G)\simeq I(G-v)$
\end{lem}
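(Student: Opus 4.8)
The plan is to run the standard decomposition of $I(G)$ into the deletion and the star of the vertex $v$, and to reduce everything to the contractibility of the link of $v$. First note the two trivial observations that make the statement sensible: we must have $u\neq v$ (otherwise the conclusion is empty of content), and $u$ cannot be adjacent to $v$, since $uv\in E(G)$ would give $v\in N(u)\subseteq N(v)$, i.e. the loop $vv\in E(G)$, which is forbidden. Hence $u\in V(G)\setminus N[v]$, and this is exactly the vertex we will use.

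For the decomposition, write $del(v)=\{\sigma\in I(G):\,v\notin\sigma\}$ and recall that the closed star is $st(v)=\{\sigma\in I(G):\,\sigma\cup\{v\}\in I(G)\}$. A set not containing $v$ is independent in $G$ if and only if it is independent in $G-v$, so $del(v)=I(G-v)$. Directly from the definitions, $I(G)=del(v)\cup st(v)$, the complex $st(v)$ is a cone with apex $v$ and is therefore contractible, and $del(v)\cap st(v)=lk(v)$.

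The heart of the argument is that $lk(v)$ is contractible, and this is the only place the hypothesis is used. By definition $lk(v)$ consists of the independent sets $\tau$ with $\tau\cap N[v]=\emptyset$, so $lk(v)=I(G-N[v])$. I claim $u$ is an isolated vertex of $G-N[v]$: it survives there because $u\notin N[v]$, while every neighbour $w$ of $u$ satisfies $w\in N(u)\subseteq N(v)\subseteq N[v]$ and is thus deleted. An isolated vertex can be adjoined to every independent set, so it is a cone point of the corresponding independence complex; hence $u$ is a cone point of $lk(v)=I(G-N[v])$ and $lk(v)$ is contractible.

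Finally, $I(G)=del(v)\cup st(v)$ is obtained from $del(v)=I(G-v)$ by gluing on the contractible complex $st(v)$ along the contractible complex $lk(v)=del(v)\cap st(v)$. Since this is an inclusion of simplicial subcomplexes, hence a cofibration, the gluing lemma applies: collapsing the contractible $st(v)$ gives $I(G)\simeq del(v)/lk(v)$, and collapsing the contractible subcomplex $lk(v)$ gives $del(v)/lk(v)\simeq del(v)=I(G-v)$. I expect the key step—and the only genuinely graph-theoretic one—to be recognizing that $N(u)\subseteq N(v)$ forces $u$ to be isolated in $G-N[v]$, turning $lk(v)$ into a cone; the surrounding pushout manipulation is a formal, standard fact about CW pairs. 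One could instead exhibit a direct deformation retraction of $I(G)$ onto $I(G-v)$ that pushes $v$ toward $u$, but the pushout formulation keeps the bookkeeping minimal and isolates precisely where the hypothesis is needed.
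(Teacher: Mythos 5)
Your proof is correct, and its graph-theoretic heart is exactly the one the paper relies on. The paper gives no argument of its own for this lemma --- it cites Engstr\"om and remarks that the statement is the special case of Proposition~\ref{cofseq}(a) in which $I(G-N_G[v])$ is contractible --- and your key observation is precisely what that derivation needs: $N(u)\subseteq N(v)$ forces $u\notin N[v]$ (no loops) and makes $u$ isolated in $G-N[v]$, so $lk(v)=I(G-N[v])$ is a cone, hence contractible. Where you differ is only in the topological bookkeeping: instead of quoting the cofibre sequence, you unfold it, writing $I(G)=I(G-v)\cup st(v)$ with $I(G-v)\cap st(v)=lk(v)$ and collapsing the two contractible pieces along cofibrations. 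This buys a self-contained, elementary proof (your quotient argument is essentially how the cofibre sequence of Proposition~\ref{cofseq} is established in the first place, since that sequence comes from this very pushout), at the cost only of brevity relative to invoking the proposition the paper already states. One small refinement: your route shows $I(G)\simeq I(G-v)$ via the common quotient $I(G-v)/lk(v)$; if you want the slightly stronger conclusion that the \emph{inclusion} $I(G-v)\hookrightarrow I(G)$ is itself a homotopy equivalence (as Proposition~\ref{cofseq}(a) asserts, and as later arguments in the paper implicitly use), note that the collapse maps $I(G)\to I(G)/st(v)$ and $I(G-v)\to I(G-v)/lk(v)$ land in the same space, the triangle commutes, and both collapses are homotopy equivalences, so the inclusion is one by two-out-of-three.
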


The last lemma can be seen as a particular case of part (a) of the following proposition.
\begin{prop}\label{cofseq}\citep{adamsplit}
There is always a cofibre sequence
\begin{equation*}
\xymatrix{
I(G-N_G[v]) \ar@{^(->}[r] & I(G-v) \ar@{^(->}[r] & I(G) \ar@{->}[r] & \Sigma I(G-N_G[v]) \ar@{->}[r] & \cdots  
}
\end{equation*}
In particular
\begin{itemize}
\item[a)] if $I(G-N_G[v])$ is contractible then the natural inclusion $I(G-v)\hookrightarrow I(G)$ is 
a homotopy equivalence,
\item[b)] if $I(G-N_G[v])\hookrightarrow I(G-v)$ is null-homotopic then there is a splitting
$$I(G)\simeq I(G-v)\vee\Sigma I(G-N_G[v]).$$
\end{itemize}
\end{prop}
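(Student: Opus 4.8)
The plan is to realize $I(G)$ as a mapping cone and then read off the whole statement from the Puppe sequence of a single inclusion. First I would record the combinatorics of the star and link of the chosen vertex $v$ inside $I(G)$. The star $st(v)=v*lk(v)$ is a cone with apex $v$, hence contractible, and a simplex $\sigma$ with $v\notin\sigma$ lies in $lk(v)$ exactly when $\sigma\cup\{v\}$ is independent, i.e.\ when $\sigma$ is an independent set disjoint from the closed neighbourhood $N_G[v]$. Thus $lk(v)=I(G-N_G[v])$. Since every independent set either contains $v$, and so lies in $st(v)$, or omits $v$, and so lies in $I(G-v)$, I get the decomposition
$$ I(G)=I(G-v)\cup st(v),\qquad I(G-v)\cap st(v)=lk(v)=I(G-N_G[v]). $$

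The key observation is that, because $st(v)$ is literally the cone on $lk(v)$, gluing it to $I(G-v)$ along $lk(v)$ exhibits $I(G)$ as the mapping cone of the inclusion $\iota\colon I(G-N_G[v])\hookrightarrow I(G-v)$. The Puppe sequence of $\iota$ is then exactly the asserted cofibre sequence
$$ I(G-N_G[v])\xrightarrow{\ \iota\ } I(G-v)\longrightarrow I(G)\longrightarrow \Sigma\, I(G-N_G[v])\longrightarrow\cdots. $$
I would note that the inclusions involved are inclusions of subcomplexes, hence cofibrations of CW complexes, so the mapping-cone description is strict and no point-set corrections are needed. Parts (a) and (b) then follow from standard facts about this cofibre sequence. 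For (a), if $I(G-N_G[v])$ is contractible then the cone $st(v)$ is being attached along a contractible subcomplex $lk(v)$; by the gluing lemma the inclusion $I(G-v)\hookrightarrow I(G)$ is a homotopy equivalence (equivalently, $\Sigma I(G-N_G[v])$ is contractible, so the map $I(G-v)\to I(G)$ in the sequence has contractible cofibre and is an equivalence). For (b), the mapping cone depends only on the homotopy class of $\iota$; if $\iota$ is null-homotopic it may be replaced by a constant map, whose mapping cone is $I(G-v)\vee\Sigma I(G-N_G[v])$, giving the claimed splitting.

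As a sanity check I would verify that this specializes to Lemma~\ref{vecindad}: if $N(u)\subseteq N(v)$ then $u$ is non-adjacent to $v$ (else $v\in N(u)\subseteq N(v)$, a contradiction), so $u$ survives in $G-N_G[v]$, and there it becomes isolated because $N(u)\setminus N_G[v]=\emptyset$; an isolated vertex makes $I(G-N_G[v])$ a cone, hence contractible, and part (a) yields $I(G)\simeq I(G-v)$.

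The genuinely delicate point is not any of the homotopy-theoretic formalities, which are routine once the cone is identified, but the bookkeeping that makes the mapping-cone identification literal: one must be sure that $st(v)$, $lk(v)$, and $I(G-v)$ assemble into all of $I(G)$ with the indicated intersection, and that attaching the contractible cone along its base via $\iota$ reproduces $I(G)$ on the nose. I would therefore spend the most care on this combinatorial step and on confirming that the relevant inclusions are cofibrations, after which the cofibre sequence and both corollaries are formal.
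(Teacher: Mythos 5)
Your proof is correct, and since the paper states this proposition without proof (citing Adamaszek), there is nothing in the paper to diverge from: your decomposition $I(G)=I(G-v)\cup st(v)$ with $I(G-v)\cap st(v)=lk(v)=I(G-N_G[v])$, the identification of $I(G)$ as the mapping cone of the inclusion $lk(v)\hookrightarrow I(G-v)$, and the Puppe sequence is precisely the standard argument in the cited source. Both corollaries (a) and (b), and your sanity check recovering Lemma~\ref{vecindad} via an isolated vertex, are handled correctly.
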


For a vertex $v$, we define its star cluster as the subcomplex of $I(G)$ given by
$$SC(v)=\bigcup_{u\in N(v)}st(u)$$

\begin{theorem}\label{barmak}\citep{barmak}
Let $G$ be a a graph and let $v$ be a non-isolated vertex of $G$ which is contained in no triangle. Then
$$I(G)\simeq\Sigma(st_{I(G)}(v)\cap SC(v))$$
\end{theorem}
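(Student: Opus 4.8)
The plan is to realize $I(G)$ as the union of two contractible subcomplexes and then invoke the elementary fact that the union of two contractible subcomplexes, glued along their intersection by cofibrations, is homotopy equivalent to the suspension of that intersection. Concretely, I would take the two pieces to be $st(v)$ and $SC(v)$, show that each is contractible, and identify $st(v)\cap SC(v)$ as the space being suspended.

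First I would record the consequence of the triangle-free hypothesis: if $v$ lies in no triangle, then no two neighbours of $v$ are adjacent, so $N(v)$ is an independent set and hence a simplex of $I(G)$; moreover every subset $T\subseteq N(v)$ is a simplex. Next I would check the set-theoretic identity $I(G)=st(v)\cup SC(v)$. Given an independent set $\sigma$, either $\sigma$ meets $N(v)$ or it does not. If $\sigma\cap N(v)=\emptyset$, then $\sigma\cup\{v\}$ is still independent, so $\sigma\in st(v)$; if some $u\in\sigma\cap N(v)$, then $\sigma\in st(u)\subseteq SC(v)$. Since $v$ is non-isolated we have $N(v)\neq\emptyset$, so $SC(v)$ is genuinely defined.

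It remains to see that both pieces are contractible. The star $st(v)$ is a cone with apex $v$, hence contractible. For $SC(v)=\bigcup_{u\in N(v)}st(u)$ I would apply the Nerve Lemma to the cover by the subcomplexes $st(u)$, $u\in N(v)$. The key computation is that for any $T\subseteq N(v)$ one has $\bigcap_{u\in T}st(u)=st(T)$: the condition that $\tau\cup\{u\}$ be independent for every $u\in T$ coincides with the condition that $\tau\cup T$ be independent, precisely because $T$ is itself independent. Each such $st(T)$ is a nonempty star, hence contractible, so every finite intersection arising in the cover is contractible and nonempty. Therefore the nerve is the full simplex on the vertex set $N(v)$, which is contractible, and the Nerve Lemma yields that $SC(v)$ is contractible.

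Finally, since $st(v)$ and $SC(v)$ are contractible subcomplexes of $I(G)$ whose union is all of $I(G)$ and whose inclusions are cofibrations, the homotopy pushout of $\ast\leftarrow\bigl(st(v)\cap SC(v)\bigr)\to\ast$ computes $I(G)$, giving $I(G)\simeq\Sigma\bigl(st(v)\cap SC(v)\bigr)$, as claimed. I expect the main obstacle to be the contractibility of $SC(v)$: the whole nerve argument rests on the intersection identity $\bigcap_{u\in T}st(u)=st(T)$, which fails without the independence of $N(v)$, so the triangle-free condition at $v$ is doing essential work and must be used carefully.
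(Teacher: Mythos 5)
Your proposal is correct, and it is essentially the original argument of Barmak, which the paper simply cites as Theorem~\ref{barmak} without reproving: the decomposition $I(G)=st(v)\cup SC(v)$, the contractibility of $SC(v)$ via the Nerve Lemma applied to the cover $\{st(u)\}_{u\in N(v)}$ using the identity $\bigcap_{u\in T}st(u)=st(T)$ (valid because the no-triangle hypothesis makes $N(v)$ independent), and the homotopy pushout of $\ast\leftarrow st(v)\cap SC(v)\rightarrow\ast$ are exactly the steps in the cited source. The only point worth a passing remark is the degenerate case $st(v)\cap SC(v)=\emptyset$ (e.g.\ $G=K_2$), where the conclusion still holds with the convention that the unreduced suspension of the empty space is $\mathbb{S}^0$.
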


Given the diagram 
\begin{equation*}
    \xymatrix{
    \mathcal{S}\colon & A \ar@{<-}[r]^{g} & C \ar@{->}[r]^{f} & B
    }
\end{equation*}
its pushout is the space 
$$\colim\left(\mathcal{S}\right)=A\sqcup B/\sim$$
where $g(c)\sim f(c)$ for $c\in C$.
The homotopy pushout of $\mathcal{S}$ is the space
$$\hocolim\left(\mathcal{S}\right)=A\sqcup (A\times I)\sqcup B/\sim$$
where $g(c)\sim (c,0)$ and $f(c)\sim (c,1)$.
If one of $g$ or $f$ is a cofibration, then $\colim(\mathcal{S})\simeq\hocolim(\mathcal{S})$ (see Proposition 3.6.17 
\citep{cubicalhomotopy}). 

\begin{theorem}(See 6.2.8 \citep{introhomo})\label{teohomocompush}
If the following diagram is homotopy commutative ($\alpha\circ f\simeq f'\circ\beta$ and 
$\gamma\circ g\simeq g'\circ\beta$)
\begin{equation*}
    \xymatrix{
    \mathcal{S}\colon & X \ar@{->}[d]^{\alpha} & Z \ar@{->}[l]_{f} \ar@{->}[r]^{g} \ar@{->}[d]^{\beta} & Y \ar@{->}[d]^{\gamma}\\
    \mathcal{S}'\colon & X' & Z' \ar@{->}[l]^{f'} \ar@{->}[r]_{g'}& Y'
    }
\end{equation*}
with  $\alpha,\beta,\gamma$ homotopy equivalences. Then $\hocolim(\mathcal{S})\simeq \hocolim(\mathcal{S}')$.
\end{theorem}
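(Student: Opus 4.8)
The plan is to work with the explicit double mapping cylinder model and to convert the homotopy-commutativity hypothesis into a strictly commutative ladder to which a gluing lemma applies. Write $\hocolim(\mathcal{S})=X\sqcup(Z\times I)\sqcup Y/\!\sim$ as in the definition, with $(z,0)\sim f(z)$ and $(z,1)\sim g(z)$, and similarly for $\mathcal{S}'$. Fix homotopies $H\colon Z\times I\to X'$ with $H_0=\alpha\circ f$, $H_1=f'\circ\beta$ and $K\colon Z\times I\to Y'$ with $K_0=\gamma\circ g$, $K_1=g'\circ\beta$, which exist by the hypothesis. First I would build an explicit map $\Phi\colon\hocolim(\mathcal{S})\to\hocolim(\mathcal{S}')$: set $\Phi=\alpha$ on $X$, $\Phi=\gamma$ on $Y$, and on the cylinder subdivide $I$ into $[0,\tfrac13]$, $[\tfrac13,\tfrac23]$, $[\tfrac23,1]$, letting $\Phi(z,t)=H(z,3t)$ on the first (landing in $X'$), $\Phi(z,t)=(\beta(z),3t-1)$ on the middle (landing in the cylinder of $\hocolim(\mathcal{S}')$), and $\Phi(z,t)=K(z,3-3t)$ on the last (landing in $Y'$). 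A direct check at $t=0,\tfrac13,\tfrac23,1$ shows the pieces agree on overlaps and respect the identifications $(z,0)\sim f(z)$, $(z,1)\sim g(z)$, so $\Phi$ is well defined and continuous.

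Second, to prove $\Phi$ is a homotopy equivalence I would reorganize the homotopy pushout as an ordinary pushout along cofibrations. Let $M_f=X\sqcup(Z\times[0,\tfrac12])/\!\sim$ be the mapping cylinder of $f\colon Z\to X$ and $M_g=Y\sqcup(Z\times[\tfrac12,1])/\!\sim$ that of $g$; the inclusions of the middle copy $Z=Z\times\{\tfrac12\}$ into $M_f$ and into $M_g$ are cofibrations, and $\hocolim(\mathcal{S})=M_f\cup_Z M_g$. By construction $\Phi$ restricts to maps $\Phi_f\colon M_f\to M_{f'}$ and $\Phi_g\colon M_g\to M_{g'}$, and on the middle copy it restricts exactly to $\beta$ (indeed $\Phi(z,\tfrac12)=(\beta(z),\tfrac12)$). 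Thus $\Phi$ is the map of pushouts induced by a \emph{strictly} commutative ladder whose vertical arrows are $\Phi_f$, $\beta$, $\Phi_g$ and whose horizontal arrows are the cofibrant inclusions of $Z$ and $Z'$.

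Third, I would verify the three vertical arrows are homotopy equivalences and then invoke the gluing lemma. The map $\beta$ is one by hypothesis. For $\Phi_f$, the standard deformation retractions $r\colon M_f\to X$ and $r'\colon M_{f'}\to X'$ satisfy $r'\circ\Phi_f\simeq\alpha\circ r$ (on $X$ both equal $\alpha$, and on the cylinder the homotopy $s\mapsto H(z,s\cdot\min(3t,1))$ slides $r'\circ\Phi_f$ down to the constant-in-$t$ map $\alpha f(z)=\alpha\circ r$); since $r$, $r'$ and $\alpha$ are homotopy equivalences, $\Phi_f$ is one by two-out-of-three, and the symmetric argument with $K$ handles $\Phi_g$. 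Now the gluing lemma for pushouts of homotopy equivalences of diagrams along cofibrations (available in \citep{cubicalhomotopy}) applies to the strict ladder above and yields that the induced map on pushouts, namely $\Phi$, is a homotopy equivalence. Since all spaces in sight are CW complexes, one could alternatively finish via Whitehead's theorem after a Mayer--Vietoris and van Kampen computation, but the gluing route is cleaner.

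The step I expect to be the main obstacle is precisely the passage from the homotopy-commutative squares to a strictly commutative ladder: the data provide only $\alpha\circ f\simeq f'\circ\beta$ and $\gamma\circ g\simeq g'\circ\beta$, so the naive map $\alpha\sqcup(\beta\times\mathrm{id})\sqcup\gamma$ fails to respect the gluings. The interpolation that spends the outer thirds of the cylinder running along $H$ and $K$ is exactly the device that repairs this, and it is in checking that $\Phi_f$, $\Phi_g$ are genuine equivalences (rather than merely continuous) that the homotopies $H$, $K$ must be used with care.
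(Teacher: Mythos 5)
Your proof is correct: the subdivided-cylinder map $\Phi$ is well defined (the seam checks at $t=0,\tfrac13,\tfrac23,1$ are exactly right), it restricts strictly to $\beta$ on the middle copy $Z\times\{\tfrac12\}$, the retraction argument $r'\circ\Phi_f\simeq\alpha\circ r$ plus two-out-of-three makes $\Phi_f,\Phi_g$ homotopy equivalences, and the gluing lemma along the cofibrant free ends of the mapping cylinders yields the conclusion. The paper offers no proof of its own here---it quotes the statement as Theorem 6.2.8 of \citep{introhomo}---and your double-mapping-cylinder argument is essentially the standard textbook proof of that result, so you have matched the intended approach.
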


For completness we give the proof of the following well-known fact.

\begin{lem}\label{homocolimpegado}
Let $X,Y,Z$ be spaces with maps $f\colon Z\longrightarrow X$ and $g\colon Z\longrightarrow Y$ such that both maps are null-homotopic. Then
$$\hocolim\left(\mathcal{S}\right)\simeq X\vee Y\vee \Sigma Z$$
where 
\begin{equation*}
    \xymatrix{
    \mathcal{S}\colon & Y \ar@{<-}[r]^{g} & Z \ar@{->}[r]^{f} & X
    }
\end{equation*}
\end{lem}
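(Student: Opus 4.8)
The plan is to realize $\hocolim(\mathcal{S})$ as a concrete quotient and simplify it using the hypothesis that both $f$ and $g$ are null-homotopic. Recall the given description
\[
\hocolim(\mathcal{S}) = X \sqcup (Z \times I) \sqcup Y / \sim,
\]
where $(z,0) \sim f(z)$ and $(z,1) \sim g(z)$ for each $z \in Z$; this is the double mapping cylinder of $f$ and $g$. Intuitively, we are gluing a cylinder $Z \times I$ onto $X$ along one end via $f$ and onto $Y$ along the other end via $g$. The strategy is: since $f$ is null-homotopic, deform the attaching map at the end $Z \times \{0\}$ to a constant map, and similarly for $g$ at $Z \times \{1\}$; when both ends of the cylinder are collapsed to points, the cylinder becomes a suspension $\Sigma Z$, and the two collapse points become wedge points joining $\Sigma Z$ to $X$ and to $Y$ respectively.

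First I would invoke Theorem~\ref{teohomocompush} to replace the diagram $\mathcal{S}$ by a homotopy-equivalent one in which the maps are literally constant. Let $z_0 \in Z$ be a basepoint and let $x_0 = f(z_0)$, $y_0 = g(z_0)$. Since $f \simeq \mathrm{const}_{x_0}$ and $g \simeq \mathrm{const}_{y_0}$, the square
\[
\xymatrix{
X \ar@{=}[d] & Z \ar@{->}[l]_{f} \ar@{->}[r]^{g} \ar@{=}[d] & Y \ar@{=}[d]\\
X & Z \ar@{->}[l]^{\mathrm{const}_{x_0}} \ar@{->}[r]_{\mathrm{const}_{y_0}} & Y
}
\]
is homotopy commutative with identity vertical maps (which are homotopy equivalences), so Theorem~\ref{teohomocompush} gives $\hocolim(\mathcal{S}) \simeq \hocolim(\mathcal{S}')$, where $\mathcal{S}'$ has the two constant maps.

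Next I would compute $\hocolim(\mathcal{S}')$ directly. With both maps constant, the double mapping cylinder is $X \sqcup (Z \times I) \sqcup Y$ with $(z,0) \sim x_0$ and $(z,1) \sim y_0$ for all $z$. Collapsing $Z \times \{0\}$ to the single point $x_0$ and $Z \times \{1\}$ to the single point $y_0$ turns $Z \times I$ precisely into the unreduced suspension $\Sigma Z$, with its two suspension vertices identified to $x_0 \in X$ and $y_0 \in Y$. Thus the quotient is $X$, $\Sigma Z$, and $Y$ glued at a pair of points, one vertex of $\Sigma Z$ sitting on $X$ and the other on $Y$. Passing to the homotopy category (using that all the relevant inclusions of the gluing points are cofibrations, so these wedge-style identifications are homotopy invariant), this space is homotopy equivalent to $X \vee \Sigma Z \vee Y$: one first contracts a path inside $\Sigma Z$ joining its two vertices to make all three gluings occur at a single point, which is legitimate up to homotopy.

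The main obstacle I anticipate is purely a matter of basepoint bookkeeping rather than deep topology: the naive quotient attaches $\Sigma Z$ to $X$ and $Y$ at \emph{two} distinct points, whereas a genuine wedge requires a \emph{single} common basepoint. The cleanest fix is to choose a path in $\Sigma Z$ from one suspension vertex to the other (for instance the arc $\{z_0\} \times I$), and contract it; since this arc is contractible and cofibrantly embedded, collapsing it is a homotopy equivalence that identifies $x_0$ with $y_0$ and merges the three pieces at one point, yielding $X \vee Y \vee \Sigma Z$ as claimed. I would make sure to note that all spaces involved (simplicial complexes, or CW complexes) are well-pointed so that these cofibration hypotheses hold and the wedge is well-defined up to homotopy.
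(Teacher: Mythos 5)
Your proof is correct, and its first half coincides with the paper's: both arguments replace $\mathcal{S}$ by the diagram with constant maps (your $\mathcal{S}'$, the paper's $\mathcal{D}_3$) via Theorem \ref{teohomocompush}, using that a null-homotopic map is homotopic to a constant map. The difference lies in how the homotopy pushout of the two constant maps is then evaluated. The paper stays inside the homotopy-colimit calculus: it assembles a $3\times3$ grid of homotopy pushout squares whose total pushout is $\hocolim(\mathcal{D}_1)$ for $\mathcal{D}_1\colon \Sigma Z\vee Y \hookleftarrow \Sigma Z \hookrightarrow \Sigma Z\vee X$, and invokes the pasting result (Proposition 3.7.26 of \citep{cubicalhomotopy}) to identify this with $\hocolim(\mathcal{D}_3)$; since both legs of $\mathcal{D}_1$ are cofibrations between wedges sharing the $\Sigma Z$ factor, the outer pushout is literally $X\vee Y\vee\Sigma Z$. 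You instead compute the double mapping cylinder of the constant maps by hand: collapsing both ends of the cylinder turns $Z\times I$ into the unreduced suspension attached to $X$ and $Y$ at its two cone points, and collapsing the cofibrantly embedded arc $\{z_0\}\times I$ merges the two attachment points into a single wedge point, with $\Sigma Z$ collapsed along the arc still homotopy equivalent to $\Sigma Z$. Both routes are sound; yours is more elementary and self-contained, at the cost of the point-set bookkeeping you correctly flag (well-pointedness of $Z$, the arc inclusion being a cofibration --- automatic in the simplicial/CW setting in which the paper applies the lemma), while the paper's grid argument outsources exactly those issues to the cited homotopy-invariance machinery.
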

\begin{proof}
We take the diagrams 
\begin{equation*}
    \xymatrix{
    \mathcal{D}_1\colon & \Sigma Z\vee Y \ar@{<-^)}[r] & \Sigma Z \ar@{^(->}[r] & \Sigma Z\vee X
    }
\end{equation*}
and 
\begin{equation*}
    \xymatrix{
    \mathcal{S}: & Y \ar@{<-}[r]^{g} & Z \ar@{->}[r]^{f} & X
    }
\end{equation*}
where the point of the wedge in $X$ and $Y$ are a point for which the constant map is homotopic to $f$ and $g$ respectively. 
Taking the next diagram where all squares are homotopy pushout squares:
\begin{equation*}
\xymatrix{
Z \ar@{->}[r] \ar@{->}[d] & \ast \ar@{->}[r] \ar@{->}[d]& X \ar@{->}[d] \\
\ast \ar@{->}[r] \ar@{->}[d]& \Sigma Z \ar@{^(->}[r] \ar@{_(->}[d]& \Sigma Z\vee X \ar@{->}[d]\\
Y \ar@{->}[r] & \Sigma Z\vee Y \ar@{->}[r] & \hocolim(\mathcal{D}_1)
}
\end{equation*}
Taking the diagram $\mathcal{D}_3$ given by the compositions 
\begin{equation*}
    \xymatrix{
    Z \ar@{->}[r] & \ast \ar@{->}[r]& X\\
    Z \ar@{->}[r] & \ast \ar@{->}[r]& Y
    }
\end{equation*}
the above $2 \times 2$ square diagram shows that $\hocolim(\mathcal{D}_3)\simeq \hocolim(\mathcal{D}_1)$ (see Proposition 3.7.26 \citep{cubicalhomotopy}) 
and by hypothesis we can construct an homotopy commutative 
diagram between $\mathcal{S}$ and $\mathcal{D}_3$ with the identities. Therefore, by Theorem \ref{teohomocompush} 
$\hocolim(\mathcal{D})\simeq X\vee Y\vee\Sigma Z$.
\end{proof}

\begin{prop}\label{homocolimpegadoesferas}
Let $K_1,\dots,K_n$ be $n\geq2$ CW-complexes such that each $K_i$ is contractible or is homotopy equivalent to a wedge of spheres of dimension not less 
than 
$r$ and for any non-empty subset $S$ of $\{1,\dots,n\}$, $K_S := \displaystyle\bigcap_{i\in S}K_i$ is either contractible or 
homotopy equivalent to a wedge of spheres of dimension $r_{|S|}$, where $r_2\leq r-1$ and $r_{i+1}=r_i-1$. Then
$$\bigcup_{i}^nK_n\simeq\bigvee_{i=1}^nX_i$$
where 
$$X_i=\bigvee_{\{j_1,\dots,j_i\}\in\mathcal{P}(\underline{n})}\Sigma^{i-1}(K_{j_1}\cap\cdots\cap K_{j_i})$$
\end{prop}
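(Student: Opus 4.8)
\section*{Proof proposal}

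The plan is to induct on $n$, adding one complex at a time and gluing with a homotopy pushout. Write $U_k=\bigcup_{i=1}^k K_i$ and $W=U_{n-1}\cap K_n=\bigcup_{i=1}^{n-1}(K_i\cap K_n)$. Because the $K_i$ are subcomplexes of their union, every inclusion in sight is a cofibration, so by the remark preceding Theorem~\ref{teohomocompush} the union $U_n=U_{n-1}\cup K_n$ agrees with the homotopy pushout of the span $U_{n-1}\longleftarrow W\longrightarrow K_n$. The base case $n=1$, where only $X_1=K_1$ survives, is immediate, and each inductive step will be carried by a single application of Lemma~\ref{homocolimpegado}.

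First I would arrange the induction so that it can be fed back to $W$. Set $L_i:=K_i\cap K_n$ for $1\le i\le n-1$. These $n-1$ complexes satisfy the hypotheses of the proposition with the shifted parameters $r'=r_2$ and $r'_j=r_{j+1}$: each $L_i$ is contractible or a wedge of $r_2$-spheres, and for $S\subseteq\underline{n-1}$ one has $\bigcap_{i\in S}L_i=K_{S\cup\{n\}}$, a wedge of spheres of dimension $r_{|S|+1}=r'_{|S|}$, while $r'_2=r_3=r'-1$ and $r'_{j+1}=r'_j-1$. Hence the induction hypothesis applies to $\{L_i\}$ and gives $W\simeq\bigvee_{i=1}^{n-1}Y_i$ with $Y_i=\bigvee_{|S|=i,\,S\subseteq\underline{n-1}}\Sigma^{i-1}(K_{S\cup\{n\}})$.

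The crucial step is to show that both legs of the span are null-homotopic, and this is exactly where the dimension-shifting hypothesis pays off. A short computation shows that every non-contractible sphere appearing in $W$ has dimension exactly $r_2$: the summand $\Sigma^{i-1}(K_{S\cup\{n\}})$ contributes spheres of dimension $r_{i+1}+(i-1)=r_2$. By contrast every sphere appearing in $U_{n-1}$ has dimension at least $r_2+1$ (for the $i$-fold terms, $r_i+(i-1)=r_2+1$; for the single complexes, $\ge r\ge r_2+1$), so $U_{n-1}$ is $r_2$-connected, as is $K_n$. Since $W$ is homotopy equivalent to a wedge of $r_2$-spheres and a map out of a wedge is null-homotopic as soon as it is so on each summand, the vanishing of $\pi_{r_2}$ in both $U_{n-1}$ and $K_n$ forces both legs to be null-homotopic. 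I expect this connectivity bookkeeping --- confirming that the $r_i$ and the suspension counts conspire to keep the two dimension ranges disjoint --- to be the main obstacle.

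With both legs null-homotopic, Lemma~\ref{homocolimpegado} gives $U_n\simeq U_{n-1}\vee K_n\vee\Sigma W$, and it remains to reorganize this into the claimed wedge. I would split the size-$i$ index sets $S\subseteq\underline{n}$ according to whether they contain $n$. Those avoiding $n$ reassemble, across all $i$, into $\bigvee_{i=1}^{n-1}X_i'\simeq U_{n-1}$ (the induction hypothesis for $K_1,\dots,K_{n-1}$, with $X_i'$ summed over the $i$-subsets of $\underline{n-1}$). Those containing $n$, written $S=S'\cup\{n\}$ with $|S'|=i-1$, contribute $\Sigma^{i-1}(K_{S'\cup\{n\}})$; the case $S'=\emptyset$ gives $K_n$, and summing the remaining terms over $i\ge2$ gives $\bigvee_{k=1}^{n-1}\Sigma Y_k=\Sigma W$. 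Matching these families against $U_{n-1}\vee K_n\vee\Sigma W$ closes the induction; this last part is purely combinatorial reindexing and presents no real difficulty.
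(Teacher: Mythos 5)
Your proposal is correct and follows essentially the same route as the paper's own proof: induction on $n$, writing $\bigcup_{i=1}^n K_i$ as the (homotopy) pushout of $U_{n-1}\longleftarrow W\longrightarrow K_n$ with $W=\bigcup_{i=1}^{n-1}(K_i\cap K_n)$, checking via the dimension bookkeeping that both legs are null-homotopic, and concluding with Lemma~\ref{homocolimpegado}. Your write-up is in fact slightly more careful than the paper's, since you explicitly verify that the shifted parameters $r'=r_2$, $r'_j=r_{j+1}$ let the inductive hypothesis apply to the family $K_i\cap K_n$, and you carry out the final reindexing of the wedge summands, both of which the paper leaves implicit.
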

\begin{proof}
For $n=2$ this is clear by Lemma \ref{homocolimpegado}. For $n\geq3$ we take  
$$A=\bigcup_{i=1}^{n-1}K_i,\;\;C=\bigcup_{i=1}^{n-1}\left(K_i\cap K_n\right)$$
Then
$$\bigcup_{i}^nK_n=\colim\left(A \longhookleftarrow C \longhookrightarrow K_n\right)\simeq\hocolim\left(A \longhookleftarrow C \longhookrightarrow K_n\right)$$
By inductive hypothesis $C$ is homotopy equivalent to a wedge of spheres of dimension at most $r_2$ or contractible, 
therefore the map $C\longhookrightarrow K_n$ is null-homotopic. By inductive hypothesis $A$ is 
homotopy equivalent to a wedge of spheres of dimesion at least $r_2+1$ or contractible, so the map $C\longhookrightarrow A$ 
is null-homotopic. Applying Lemma \ref{homocolimpegado} we obtain the result.
\end{proof}

The following obvious Lemma can be proved with practically the same proof as the last Proposition.
\begin{lem}\label{lempegcontra}
Let $K_1,\dots,K_n$ be $n\geq2$ CW-complexes such that for any $S$ non-empty subset of $\{1,\dots,n\}$, $\displaystyle\bigcap_{i\in S}K_i=K_S$ is a contractible subcomplex. Then
$$\bigcup_{i}^nK_n\simeq\bigvee_{i=1}^nK_i$$
\end{lem}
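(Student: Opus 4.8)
The plan is to prove this by induction on $n$, following the same structure as the proof of Proposition \ref{homocolimpegadoesferas}, with the simplification that every intersection is now contractible rather than a wedge of spheres. For the base case $n=2$, the hypothesis says $K_1\cap K_2$ is contractible, so both inclusions $K_1\cap K_2\hookrightarrow K_1$ and $K_1\cap K_2\hookrightarrow K_2$ are null-homotopic; since these inclusions are cofibrations the union agrees with the homotopy pushout, and Lemma \ref{homocolimpegado} gives $K_1\cup K_2\simeq K_1\vee K_2\vee\Sigma(K_1\cap K_2)$. Because the suspension of a contractible space is contractible, the last summand vanishes up to homotopy and we obtain $K_1\cup K_2\simeq K_1\vee K_2$.

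For the inductive step I would set, exactly as in the Proposition,
$$A=\bigcup_{i=1}^{n-1}K_i,\qquad C=\bigcup_{i=1}^{n-1}\bigl(K_i\cap K_n\bigr),$$
so that $\bigcup_{i=1}^{n}K_i=\colim\bigl(A \longhookleftarrow C \longhookrightarrow K_n\bigr)$, which coincides with the homotopy pushout since the maps are inclusions of subcomplexes, hence cofibrations. To apply Lemma \ref{homocolimpegado} I need both inclusions $C\hookrightarrow A$ and $C\hookrightarrow K_n$ to be null-homotopic, and the cleanest way to secure this is to show that $C$ itself is contractible.

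The key step, and really the only thing that needs checking, is the contractibility of $C$. The point is that the family $\{K_1\cap K_n,\dots,K_{n-1}\cap K_n\}$ again satisfies the hypotheses of the lemma: for any non-empty $S\subseteq\{1,\dots,n-1\}$ one has
$$\bigcap_{i\in S}\bigl(K_i\cap K_n\bigr)=\Bigl(\bigcap_{i\in S}K_i\Bigr)\cap K_n=K_{S\cup\{n\}},$$
which is contractible by assumption. By the inductive hypothesis applied to this family of $n-1$ complexes, $C\simeq\bigvee_{i=1}^{n-1}(K_i\cap K_n)$, a wedge of contractible spaces, hence contractible. Consequently both inclusions out of $C$ are null-homotopic and $\Sigma C\simeq\ast$, so Lemma \ref{homocolimpegado} yields $\bigcup_{i=1}^{n}K_i\simeq A\vee K_n\vee\Sigma C\simeq A\vee K_n$. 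Finally, the inductive hypothesis applied to $K_1,\dots,K_{n-1}$ gives $A\simeq\bigvee_{i=1}^{n-1}K_i$, and therefore $\bigcup_{i=1}^{n}K_i\simeq\bigvee_{i=1}^{n}K_i$.

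Since the whole argument rests on contractibility forcing every gluing map to be null-homotopic, I do not expect a genuine obstacle here; the only care required is checking that the induction can be fed the sub-family $\{K_i\cap K_n\}$, which is exactly the computation of intersections displayed above. This is precisely why the statement is \emph{obvious} given the Proposition.
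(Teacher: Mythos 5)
Your proposal is correct and is exactly what the paper intends: the paper dismisses this lemma as provable ``with practically the same proof as the last Proposition,'' and your argument carries out precisely that induction, using the same decomposition $A=\bigcup_{i=1}^{n-1}K_i$, $C=\bigcup_{i=1}^{n-1}(K_i\cap K_n)$ and Lemma \ref{homocolimpegado}, with the correct observation that the family $\{K_i\cap K_n\}$ inherits the hypothesis since $\bigcap_{i\in S}(K_i\cap K_n)=K_{S\cup\{n\}}$. Your explicit verification that $C$ is contractible (as a wedge of contractible complexes) is a welcome detail the paper leaves implicit.
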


\section{Independence Complex of $C_k\times K_n$}

In this section we will be focus on  $I(C_k\times K_n)$, and for this we will need the homotopy type of the independence complex of various 
graphs, such as $P_k\times K_n$ (Theorem \ref{homopkkn}). In the first subsection we will focus in cases with $3\leq k\leq5$ and for all 
$k$ for $n=2$. In the secod section we will proof one of our main results, Theorem \ref{teomod3}, we will star that subsection with a brief sumamry 
of the proof.

The case $C_5\times K_2$ will allows us to find a counterexample showing that the homotopy type 
of the independence complex of categorical product does not depend only on the homotopy type of the independence complexes of the 
factors. To see this, we take $M_q$ as the union of $q$ disjoint edges, from where we get 
that $G=K_2\times K_2\cong M_2$, $G\times G\cong M_8$ and $C_5\times G$ is equal to two disjoint 
copies of $C_5\times K_2$. Now $I(C_5)\cong\mathbb{S}^1\cong I(G)$ and, by Proposition \ref{propCnK2}, 
$I(C_5\times K_2)\simeq\mathbb{S}^2$, therefore
$I(C_5\times G)\simeq\mathbb{S}^5\not\simeq\mathbb{S}^7\cong I(G\times G)$. 

\subsection{$C_3\times K_n,\;\;C_4\times K_n,\;\;C_5\times K_n$ and $C_k\times K_2$}
In \citep{homotopygoyal} the authors use discrete Morse theory to show that the homotopy type of the categorical product of 
complete graphs is the wedge of $\mathbb{S}^1$. For completeness we give here a short proof using simpler tools.
\begin{prop}\label{propcompl}\citep{homotopygoyal}
$$I(K_{n_1}\times K_{n_2})\simeq\bigvee_{(n_1-1)(n_2-1)}\mathbb{S}^1$$
\end{prop}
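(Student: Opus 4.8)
The plan is to describe $I(K_{n_1}\times K_{n_2})$ combinatorially, exhibit it as a union of contractible simplices, and then identify its homotopy type with that of the complete bipartite graph $K_{n_1,n_2}$. Think of the vertex set $V(K_{n_1})\times V(K_{n_2})$ as the cells $(i,j)$ of an $n_1\times n_2$ grid. By definition of the categorical product, two cells are adjacent exactly when they differ in both coordinates, so a nonempty set is independent precisely when any two of its cells share a row or a column. The first step, which I expect to be the crux of the argument, is the combinatorial claim that such a set must lie entirely within a single row or entirely within a single column: if it met two distinct rows, then every chosen cell in the first row would have to agree in its column coordinate with every chosen cell in the second, which forces all of them into one common column. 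Granting this, the facets of $I(K_{n_1}\times K_{n_2})$ are exactly the $n_1$ full rows $R_i=\{(i,1),\dots,(i,n_2)\}$ and the $n_2$ full columns $C_j=\{(1,j),\dots,(n_1,j)\}$, each a simplex and hence contractible.

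Next I would record the incidence pattern of these simplices. Distinct rows are vertex-disjoint, as are distinct columns, while $R_i\cap C_j$ is the single vertex $(i,j)$. Writing $X=I(K_{n_1}\times K_{n_2})=A\cup B$ with $A=\bigcup_i R_i$ and $B=\bigcup_j C_j$, the piece $A$ is a disjoint union of $n_1$ contractible simplices, $B$ a disjoint union of $n_2$ contractible simplices, and $A\cap B$ is the $0$-skeleton of $X$, i.e. the discrete set of all $n_1 n_2$ vertices. Since $A\cap B\hookrightarrow A$ is the inclusion of a subcomplex, and hence a cofibration, $X$ is the homotopy pushout of $A\hookleftarrow A\cap B\hookrightarrow B$.

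Finally I would compute this homotopy pushout. Replacing $A$, $B$ and $A\cap B$ by the discrete sets of their connected components --- namely $n_1$, $n_2$ and $n_1 n_2$ points respectively --- the structure maps become $(i,j)\mapsto i$ into the rows and $(i,j)\mapsto j$ into the columns, and by Theorem \ref{teohomocompush} the homotopy pushout is unaffected by these component-collapsing homotopy equivalences. The homotopy pushout of two finite discrete sets along such a span is (the realization of) the graph with one edge joining the image $i$ to the image $j$ for each point $(i,j)$, which here is exactly the complete bipartite graph $K_{n_1,n_2}$. As a connected graph with $n_1+n_2$ vertices and $n_1 n_2$ edges, $K_{n_1,n_2}$ is homotopy equivalent to a wedge of $n_1 n_2-(n_1+n_2)+1=(n_1-1)(n_2-1)$ circles, which is the desired conclusion. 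I expect the only genuinely delicate point to be the facet identification of the first paragraph; once that is settled the rest is routine bookkeeping. (The nerve lemma applied to the good cover $\{R_i\}\cup\{C_j\}$ yields the identification $X\simeq K_{n_1,n_2}$ in a single step, but the homotopy pushout argument has the advantage of staying within the tools already set up above.)
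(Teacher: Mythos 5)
Your proof is correct, and it reaches the paper's intermediate object $K_{n_1,n_2}$ by the same decomposition --- the facets of $I(K_{n_1}\times K_{n_2})$ are the $n_1$ rows and $n_2$ columns, with the stated intersection pattern --- but you justify the identification $I(K_{n_1}\times K_{n_2})\simeq K_{n_1,n_2}$ differently. The paper applies the Nerve Theorem (Björner, Theorem 10.6) directly to the good cover by all $n_1+n_2$ facets: all pairwise intersections are points or empty and all triple intersections are empty, so the nerve is the bipartite graph $K_{n_1,n_2}$ and the theorem finishes in one line. You instead group the facets into two blocks $A$ (rows) and $B$ (columns) --- exploiting that rows are pairwise disjoint and columns are pairwise disjoint, so each block is a disjoint union of simplices and $A\cap B$ is the discrete set of all $n_1n_2$ vertices --- and compute the resulting homotopy pushout after collapsing to components via Theorem \ref{teohomocompush}, recognizing the double mapping cylinder of $n_1\ \mathrm{pts}\leftarrow n_1n_2\ \mathrm{pts}\rightarrow n_2\ \mathrm{pts}$ as the realization of $K_{n_1,n_2}$ with one edge per vertex $(i,j)$. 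What your route buys is self-containedness: it uses only the cofibration/pushout facts and Theorem \ref{teohomocompush} already set up in the preliminaries, with no external nerve citation, and you also make explicit the combinatorial claim (an independent set meeting two rows lies in a single column) that the paper treats as immediate from the definition of the categorical product. What the paper's route buys is brevity, and it scales to covers whose facets do not split into two pairwise-disjoint blocks, where your single-span pushout would not apply directly. One cosmetic caveat: your facet description implicitly assumes $n_1,n_2\geq 2$ (for $n_2=1$ the complex is a single simplex and the formula degenerates to a point), but this is the same convention the paper adopts silently, and the final count $n_1n_2-(n_1+n_2)+1=(n_1-1)(n_2-1)$ is correct.
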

\begin{proof}
Taking $V(K_{n_1})=\{1,2,\dots,n_1\}$ and $V(K_{n_2})=\{1,2,\dots,n_2\}$, then 
$$V(K_{n_1}\times K_{n_2})=\{(i,j):\;1\leq i\leq n_1\wedge1\leq j\leq n_2\}$$
and, because the definition of the categorical product, the maximal simplices of 
$I(K_{n_1}\times K_{n_2})$ are as:
$$\sigma_i=\{(i,1),(i,2),\dots,(i,n_2)\}\;\;\;\mbox{ or }\;\;\;\tau_j=\{(1,j),(2,j),\dots,(n_1,j)\}$$
Now, for $i\neq k$ and $j\neq l$ we have: $\sigma_i\cap\sigma_k=\emptyset$, $\tau_j\cap\tau_l=\emptyset$, 
$\sigma_i\cap\tau_j=\{(i,j)\}$. By the Nerve Theorem (see \citep{bjornertopmeth} Theorem 10.6), 
we get that 
$$I(K_{n_1}\times K_{n_2})\simeq K_{n_1,n_2}$$
Which is easy to see has the homotopy type of the wedge of $(n_1-1)(n_2-1)$ copies of $\mathbb{S}^1$.
\end{proof}

From this, for $K_3\cong C_3$, we have that
$$I(C_3\times K_n)\simeq\bigvee_{2(n-1)}\mathbb{S}^1$$ 
Proposition \ref{propcompl} can also be used to calculate the homotopy type of $I(C_4\times K_n)$. Taking 
$V(C_4)=\{u_1,u_2,u_3,u_4\}$ and $V(K_n)$ as before, $C_4\times K_n$ is such that 
$N_{C_4\times K_n}((u_1,i))=N_{C_4\times K_n}((u_3,i))$ and 
$N_{C_4\times K_n}((u_2,i))=N_{C_4\times K_n}((u_4,i))$ for all $1\leq i\leq n$. We define 
$$H=C_4\times K_n-(\{u_3\}\times V(K_n))\cup(\{u_4\}\times V(K_n))$$
Now, $H\cong K_2\times K_n$, so 
$$I(C_4\times K_n)\simeq I(H)\simeq\bigvee_{n-1}\mathbb{S}^1$$ 
In fact, if $N_G(u)\subset N_G(v)$ then $I(G\times H)\simeq I(G-v\times V(H))$ for any $H$, therefore 
$I(C_4\times H)\simeq I(K_2\times H)$ for any $H$.

It is easy to see that:
$$C_n\times K_2\cong\left\lbrace 
\begin{array}{cc}
2C_n & \mbox{if } n\equiv0\;(mod\;2)\\
C_{2n} & \mbox{if } n\equiv1\;(mod\;2)
\end{array}\right.
$$
for any $n\geq3$, 
because $C_n\times K_2$ is a $2$-regular bipartite graph, so it is an even cycle or the disjoint union of 
even cycles. By Weichsel’s Theorem (see \citep{handbookprod} Theorem 5.9), $C_n\times K_2$ is connected 
if and only if one of the graph has an odd cycle, and if both graphs are bipartite, the product has 
exactly two connected components 
(see figure \ref{path}.). 
From this, we get the next lemma.

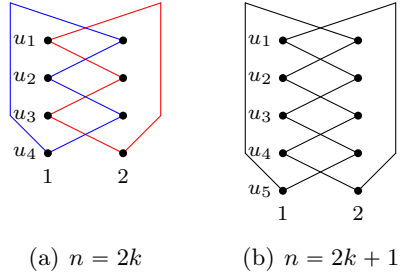
\begin{figure}
\centering
\subfigure[$n=2k$]{\begin{tikzpicture}[line cap=round,line join=round,>=triangle 45,x=1cm,y=1cm]
\clip(-1.5,-1.5) rectangle (1.5,1.5);
\draw[color=red] (-0.5,1)-- (0.5,0.5);
\draw[color=red] (0.5,0.5)-- (-0.5,0);
\draw[color=red] (-0.5,0)-- (0.5,-0.5);
\draw[color=red] (0.5,-0.5)-- (1,0);
\draw[color=red] (1,0)-- (1,1.5);
\draw[color=red] (1,1.5)-- (-0.5,1);
\draw[color=blue] (0.5,1)-- (-0.5,0.5);
\draw[color=blue] (-0.5,0.5)-- (0.5,0);
\draw[color=blue] (0.5,0)-- (-0.5,-0.5);
\draw[color=blue] (-0.5,-0.5)-- (-1,0);
\draw[color=blue] (-1,0)-- (-1,1.5);
\draw[color=blue] (-1,1.5)-- (0.5,1);
\begin{scriptsize}
\fill [color=black] (-0.5,1) circle (1.5pt);
\fill [color=black] (0.5,1) circle (1.5pt);
\fill [color=black] (0.5,0.5) circle (1.5pt);
\fill [color=black] (-0.5,0.5) circle (1.5pt);
\fill [color=black] (0.5,0) circle (1.5pt);
\fill [color=black] (-0.5,0) circle (1.5pt);
\fill [color=black] (0.5,-0.5) circle (1.5pt);
\fill [color=black] (-0.5,-0.5) circle (1.5pt);
\draw[color=black] (-0.8,1) node {$u_1$};
\draw[color=black] (-0.8,0.5) node {$u_2$};
\draw[color=black] (-0.8,0) node {$u_3$};
\draw[color=black] (-0.8,-0.5) node {$u_4$};
\draw[color=black] (-0.5,-0.8) node {$1$};
\draw[color=black] (0.5,-0.8) node {$2$};
\end{scriptsize}
\end{tikzpicture}}
\subfigure[$n=2k+1$]{\begin{tikzpicture}[line cap=round,line join=round,>=triangle 45,x=1cm,y=1cm]
\clip(-1.5,-1.5) rectangle (1.5,1.5);
\draw (-0.5,1)-- (0.5,0.5);
\draw (0.5,0.5)-- (-0.5,0);
\draw (-0.5,0)-- (0.5,-0.5);
\draw (0.5,-0.5)-- (-0.5,-1);
\draw (-0.5,-1)-- (-1,-0.5);
\draw (-1,-0.5)-- (-1,1.5);
\draw (-1,1.5)-- (0.5,1);
\draw (0.5,1)-- (-0.5,0.5);
\draw (-0.5,0.5)-- (0.5,0);
\draw (0.5,0)-- (-0.5,-0.5);
\draw (-0.5,-0.5)-- (0.5,-1);
\draw (0.5,-1)-- (1,-0.5);
\draw (1,-0.5)-- (1,1.5);
\draw (1,1.5)-- (-0.5,1);
\begin{scriptsize}
\fill [color=black] (-0.5,1) circle (1.5pt);
\fill [color=black] (0.5,1) circle (1.5pt);
\fill [color=black] (0.5,0.5) circle (1.5pt);
\fill [color=black] (-0.5,0.5) circle (1.5pt);
\fill [color=black] (0.5,0) circle (1.5pt);
\fill [color=black] (-0.5,0) circle (1.5pt);
\fill [color=black] (0.5,-0.5) circle (1.5pt);
\fill [color=black] (-0.5,-0.5) circle (1.5pt);
\fill [color=black] (0.5,-1) circle (1.5pt);
\fill [color=black] (-0.5,-1) circle (1.5pt);
\draw[color=black] (-0.8,1) node {$u_1$};
\draw[color=black] (-0.8,0.5) node {$u_2$};
\draw[color=black] (-0.8,0) node {$u_3$};
\draw[color=black] (-0.8,-0.5) node {$u_4$};
\draw[color=black] (-0.8,-1) node {$u_5$};
\draw[color=black] (-0.5,-1.3) node {$1$};
\draw[color=black] (0.5,-1.3) node {$2$};
\end{scriptsize}
\end{tikzpicture}}
\caption{$C_n\times K_2$}\label{path}
\end{figure}

\begin{lem}
$$I(C_n\times K_2)\simeq\left\lbrace 
\begin{array}{cc}
I(C_n)*I(C_n) & \mbox{if } n\equiv0\;(mod\;2)\\
I(C_{2n}) & \mbox{if } n\equiv1\;(mod\;2)
\end{array}\right.$$

\end{lem}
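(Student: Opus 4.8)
The plan is to combine the graph isomorphism just established with the standard fact that passing to independence complexes turns a disjoint union of graphs into a simplicial join. Concretely, I would first record that for any two vertex-disjoint graphs $G_1$ and $G_2$ one has $I(G_1\sqcup G_2)\cong I(G_1)*I(G_2)$. This is immediate from the definitions: a set $S\subseteq V(G_1)\sqcup V(G_2)$ is independent if and only if $S\cap V(G_1)$ is independent in $G_1$ and $S\cap V(G_2)$ is independent in $G_2$, since there are no edges between the two components. Hence every face of $I(G_1\sqcup G_2)$ is uniquely of the form $\sigma\cup\tau$ with $\sigma\in I(G_1)\cup\{\emptyset\}$ and $\tau\in I(G_2)\cup\{\emptyset\}$, which is exactly the face structure of the join $I(G_1)*I(G_2)$.

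With this identity in hand, the lemma follows by applying the isomorphism $C_n\times K_2\cong 2C_n$ (for $n$ even) or $C_n\times K_2\cong C_{2n}$ (for $n$ odd) recorded just above. For $n$ even we obtain $I(C_n\times K_2)\cong I(C_n\sqcup C_n)\cong I(C_n)*I(C_n)$; for $n$ odd the isomorphism $C_n\times K_2\cong C_{2n}$ directly gives $I(C_n\times K_2)\cong I(C_{2n})$. Since isomorphic graphs have isomorphic independence complexes, and isomorphic simplicial complexes are in particular homotopy equivalent, both equivalences asserted in the statement follow.

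There is essentially no hard step here: the entire content of the lemma is the graph-theoretic decomposition of $C_n\times K_2$ into even cycles, which was already carried out above via Weichsel's theorem together with the observation that $C_n\times K_2$ is a $2$-regular bipartite graph. The only point I would spell out is the join identity for disjoint unions, and this is merely an unwinding of the definitions of the join and of the independence complex; I would state it in just enough detail to make the two cases transparent, then invoke the preceding isomorphism.
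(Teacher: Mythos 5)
Your proposal is correct and matches the paper's (implicit) argument exactly: the paper derives this lemma directly from the isomorphism $C_n\times K_2\cong 2C_n$ (for $n$ even) or $C_{2n}$ (for $n$ odd) obtained via Weichsel's theorem, together with the standard fact that $I(G_1\sqcup G_2)\cong I(G_1)*I(G_2)$, which you correctly identify and justify. The only difference is that the paper leaves these routine steps unstated, while you spell them out.
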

Then, for calculating the homotopy type of the independence complex of $C_n\times K_2$ we only need the 
homotopy type of the independence complexes of cycles. 
\begin{theorem}\citep{kozlovdire}
$$I(C_n)\simeq\left\lbrace
\begin{array}{cc}
    \mathbb{S}^{k-1}\vee\mathbb{S}^{k-1} & \mbox{if } n=3k  \\
    \mathbb{S}^{k-1} & \mbox{if } n=3k+1\\
    \mathbb{S}^{k} & \mbox{if } n=3k+2   
\end{array}\right.$$
\end{theorem}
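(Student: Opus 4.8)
The plan is to reduce the whole computation to the case of paths and then derive the cycle case from a single cofibre sequence. First I would determine $I(P_n)$ for the path $P_n$ on vertices $v_1,\dots,v_n$. The engine here is a leaf--contraction recursion: taking $v_1$ to be a leaf with unique neighbour $v_2$ and applying Proposition \ref{cofseq} to $v_2$, the deletion $P_n-v_2$ leaves $v_1$ isolated, so $I(P_n-v_2)$ is a cone with apex $v_1$ and hence contractible. The middle term of the cofibre sequence $I(P_n-N[v_2])\hookrightarrow I(P_n-v_2)\hookrightarrow I(P_n)\to\Sigma I(P_n-N[v_2])$ is then contractible, and since the mapping cone of a map to a point is the suspension of its source, I get $I(P_n)\simeq\Sigma I(P_{n-3})$ for $n\geq4$ (using $P_n-N[v_2]=P_{n-3}$).

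With the base cases $I(P_1)\simeq\mathrm{pt}$, $I(P_2)\simeq\mathbb{S}^0$, and $I(P_3)\simeq\mathbb{S}^0$ (the last via Lemma \ref{vecindad}, since $N(v_1)\subseteq N(v_3)$), iterating the suspension gives that $I(P_n)$ is contractible for $n\equiv1\pmod 3$, that $I(P_{3k})\simeq\mathbb{S}^{k-1}$, and that $I(P_{3k+2})\simeq\mathbb{S}^{k}$. Next I would attack the cycle $C_n$ on $u_1,\dots,u_n$ by applying Proposition \ref{cofseq} to $u_1$, for which $C_n-u_1=P_{n-1}$ (the path $u_2\cdots u_n$) and $C_n-N[u_1]=P_{n-3}$ (the path $u_3\cdots u_{n-1}$). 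Provided the inclusion $I(P_{n-3})\hookrightarrow I(P_{n-1})$ is null-homotopic, part (b) yields the splitting $I(C_n)\simeq I(P_{n-1})\vee\Sigma I(P_{n-3})$, and substituting the path types finishes each residue class: $n=3k$ gives $\mathbb{S}^{k-1}\vee\Sigma\mathbb{S}^{k-2}\simeq\mathbb{S}^{k-1}\vee\mathbb{S}^{k-1}$, $n=3k+1$ gives $\mathbb{S}^{k-1}\vee\Sigma(\mathrm{pt})\simeq\mathbb{S}^{k-1}$, and $n=3k+2$ gives $(\mathrm{pt})\vee\Sigma\mathbb{S}^{k-1}\simeq\mathbb{S}^{k}$.

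The step I expect to be the main obstacle is verifying the hypothesis of part (b), namely that the specific inclusion $I(P_{n-3})\hookrightarrow I(P_{n-1})$ is null-homotopic. This is not a statement about abstract homotopy types but about an actual map; however, null-homotopy is homotopy-invariant, and in every residue class the source is either contractible or a sphere of dimension strictly below that of the path-connected target sphere, so the set of homotopy classes $[I(P_{n-3}),I(P_{n-1})]$ is trivial and the inclusion is forced to be null-homotopic. The only cases demanding a direct look are the low-dimensional ones involving $\mathbb{S}^0$ (where path-connectedness of the target already gives the conclusion) and the degenerate small cycles such as $C_3=K_3$, whose independence complex is three isolated points $\simeq\mathbb{S}^0\vee\mathbb{S}^0$, in agreement with the formula at $k=1$.
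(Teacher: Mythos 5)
Your proposal is correct, and in fact there is nothing in the paper to compare it against: the paper quotes this result from \citep{kozlovdire} without proof, so any proof must be supplied from outside. Your argument is the standard one (essentially Kozlov's original recursion) and has the virtue of using only tools the paper already has on hand. The path recursion $I(P_n)\simeq\Sigma I(P_{n-3})$ via Proposition \ref{cofseq} applied to the neighbour of a leaf is sound: $I(P_n-v_2)$ is a cone on the isolated vertex $v_1$, and the mapping cone of a map to a contractible space is indeed the suspension of the source (for CW complexes), which together with the base cases $I(P_1)\simeq\ast$, $I(P_2)\simeq I(P_3)\simeq\mathbb{S}^0$ gives the three residue classes for paths. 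For the cycle, your verification of the hypothesis of part (b) is the one point that genuinely needs care, and you handle it correctly: in each residue class the inclusion $I(P_{n-3})\hookrightarrow I(P_{n-1})$ has either contractible source, contractible target, or is a map $\mathbb{S}^{m}\to\mathbb{S}^{m+1}$, and since $\pi_m(\mathbb{S}^{m+1})=0$ for $m\geq1$ (with path-connectedness of the target disposing of the $\mathbb{S}^0$ sources) the map is null-homotopic in every case, so the splitting $I(C_n)\simeq I(P_{n-1})\vee\Sigma I(P_{n-3})$ applies and the case-by-case bookkeeping is right. You were also correct to treat $C_3$ separately, since there $C_3-N[u_1]$ is the empty graph and the cofibre-sequence formalism degenerates; the direct computation $I(C_3)\simeq\mathbb{S}^0\vee\mathbb{S}^0$ matches the formula at $k=1$. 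One small remark: the wedge $\ast\vee\Sigma\mathbb{S}^{k-1}\simeq\mathbb{S}^k$ in the $n=3k+2$ case implicitly uses well-pointedness, which is automatic here since all complexes involved are CW.
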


From all this, the next proposition follows:
\begin{prop}\label{propCnK2}
$$I(C_n\times K_2)\simeq\left\lbrace 
\begin{array}{cc}
\displaystyle\bigvee_{4}\mathbb{S}^{4k-1} & \mbox{if } n=6k\\
\mathbb{S}^{4k} & \mbox{if } n=6k+1 \\
\mathbb{S}^{4k+1} & \mbox{if } n=6k+r \;\mbox{with } r\in\{2,4\}\\
\mathbb{S}^{4k+1}\vee\mathbb{S}^{4k+1} & \mbox{if } n=6k+3 \\
\mathbb{S}^{4k+2} & \mbox{if } n=6k+5
\end{array}\right.
$$
\end{prop}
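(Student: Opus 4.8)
The plan is to reduce everything to the two results already in hand: the Lemma expressing $I(C_n\times K_2)$ as a join $I(C_n)*I(C_n)$ when $n$ is even and as $I(C_{2n})$ when $n$ is odd, together with Kozlov's computation of $I(C_m)$. The only additional ingredient I would invoke is the standard identity $\mathbb{S}^a*\mathbb{S}^b\simeq\mathbb{S}^{a+b+1}$ for the join of spheres, and the fact that the join distributes over wedges up to homotopy, i.e. $\left(\bigvee_i X_i\right)*\left(\bigvee_j Y_j\right)\simeq\bigvee_{i,j}(X_i*Y_j)$ for well-pointed spaces. With these in place the proof becomes a bookkeeping exercise: for each residue of $n$ modulo $6$ I would determine the relevant residue modulo $3$ (of $n$ itself in the even case, of $2n$ in the odd case), read off the sphere or spheres from Kozlov's theorem, and assemble the join.

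For the even cases I would argue as follows. If $n=6k$ then $n\equiv0\pmod 3$, so $I(C_n)\simeq\mathbb{S}^{2k-1}\vee\mathbb{S}^{2k-1}$; taking the self-join and distributing over the wedge produces $2\times2$ summands, each equal to $\mathbb{S}^{(2k-1)+(2k-1)+1}=\mathbb{S}^{4k-1}$, which is the claimed $\bigvee_4\mathbb{S}^{4k-1}$. If $n=6k+2$ then $n\equiv2\pmod 3$ and $I(C_n)\simeq\mathbb{S}^{2k}$, whose self-join is $\mathbb{S}^{4k+1}$. If $n=6k+4$ then $n\equiv1\pmod 3$ and $I(C_n)\simeq\mathbb{S}^{2k}$ again, whose self-join is once more $\mathbb{S}^{4k+1}$; these two subcases merge into the single line $n=6k+r$ with $r\in\{2,4\}$.

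For the odd cases the Lemma gives $I(C_n\times K_2)\simeq I(C_{2n})$, so I would simply apply Kozlov's theorem to $C_{2n}$. When $n=6k+1$ we have $2n=12k+2\equiv2\pmod 3$, giving $\mathbb{S}^{4k}$; when $n=6k+3$ we have $2n=12k+6\equiv0\pmod 3$, giving $\mathbb{S}^{4k+1}\vee\mathbb{S}^{4k+1}$; and when $n=6k+5$ we have $2n=12k+10\equiv1\pmod 3$, giving $\mathbb{S}^{4k+2}$. Each of these matches the stated line of the proposition.

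There is no serious obstacle here: the content of the statement is carried entirely by the Lemma and by Kozlov's theorem, and what remains is the modular arithmetic of matching residues mod $6$ to residues mod $3$. The only point requiring any care is the case $n=6k$, where one must know that the join of two wedges of spheres is again a wedge of spheres with the correct count; this rests on $X*Y\simeq\Sigma(X\wedge Y)$ for well-pointed spaces and the distributivity of the smash product over wedges, which together ensure both the dimension $\mathbb{S}^{4k-1}$ and the multiplicity $4$.
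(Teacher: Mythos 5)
Your proposal is correct and follows exactly the route the paper intends: the paper states the proposition as an immediate consequence of the preceding lemma ($I(C_n\times K_2)\simeq I(C_n)*I(C_n)$ for $n$ even, $I(C_{2n})$ for $n$ odd) together with Kozlov's theorem, leaving precisely the modular bookkeeping and the join-of-spheres identity $\mathbb{S}^a*\mathbb{S}^b\simeq\mathbb{S}^{a+b+1}$ (with distributivity over wedges) that you carry out. All six residue cases check out, so there is nothing to add.
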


Now we will calculate the homotopy type of $I(C_5\times K_n)$ for all $n\geq2$.

\begin{prop}
For all $n\geq2$
$$I(C_5\times K_n) \simeq\bigvee_{n-1}\mathbb{S}^2$$
\end{prop}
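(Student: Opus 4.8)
The plan is to peel off the vertices of the $u_1$-column of $C_5\times K_n$ one at a time using the cofibre sequence of Proposition \ref{cofseq}, and then to finish with a single suspension. Throughout I abbreviate $G=C_5\times K_n$ and write a vertex as $(u_i,j)$ with $1\le i\le 5$, $1\le j\le n$; recall that $(u_i,j)$ is adjacent exactly to the vertices of columns $i-1$ and $i+1$ (indices mod $5$) lying in a row other than $j$.

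First I would show that deleting $(u_1,1),\dots,(u_1,n-1)$ one by one does not change the homotopy type. Writing $G_{k-1}=G-\{(u_1,1),\dots,(u_1,k-1)\}$ and $v=(u_1,k)$ for $1\le k\le n-1$, I claim $I(G_{k-1}-N[v])$ is contractible, so that Proposition \ref{cofseq}(a) gives $I(G_{k-1})\simeq I(G_{k-1}-v)=I(G_k)$. In $G_{k-1}-N[v]$ columns $3,4$ survive intact, column $1$ keeps the nonempty set $A=\{(u_1,m):m>k\}$, and columns $2,5$ keep only $(u_2,k),(u_5,k)$. Every vertex of $A$ then has neighbourhood exactly $\{(u_2,k),(u_5,k)\}$; moreover $N((u_3,k))\subseteq N((u_5,k))$ and, after deleting $(u_5,k)$, $N((u_4,k))\subseteq N((u_2,k))$, so Lemma \ref{vecindad} lets me erase $(u_5,k)$ and $(u_2,k)$. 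This leaves every vertex of $A$ isolated, and since $A\neq\emptyset$ (here $k\le n-1$ is used) the independence complex is a cone, hence contractible. Iterating over $k$ yields $I(G)\simeq I(H)$, where $H$ is $G$ with its $u_1$-column reduced to the single vertex $(u_1,n)$.

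For the last vertex $v=(u_1,n)$ the deleted complex is no longer contractible, and this is where the suspension appears. On the one hand, $H-v$ has an empty $u_1$-column, so it equals $P_4\times K_n$ for the path $P_4=u_2u_3u_4u_5$; using the product version of domination quoted in the text ($N_{P_4}(u_2)=\{u_3\}\subseteq\{u_3,u_5\}=N_{P_4}(u_4)$) this is homotopy equivalent to $I((P_4-u_4)\times K_n)$, and $P_4-u_4$ has the isolated vertex $u_5$, so $I(H-v)$ is contractible (the relevant instance of Theorem \ref{homopkkn}). On the other hand, in $H-N[v]$ columns $2,5$ keep only $(u_2,n),(u_5,n)$; the equalities $N((u_3,n))=N((u_5,n))$ and, after removing $(u_5,n)$, $N((u_4,n))=N((u_2,n))$ let me erase $(u_5,n)$ and $(u_2,n)$ by Lemma \ref{vecindad}, leaving exactly columns $3,4$ joined by all edges $(u_3,i)(u_4,j)$ with $i\ne j$. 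That graph is $K_2\times K_n$, so $I(H-N[v])\simeq\bigvee_{n-1}\mathbb{S}^1$ by Proposition \ref{propcompl}. Feeding both computations into the cofibre sequence $I(H-N[v])\to I(H-v)\to I(H)$, whose middle term is contractible, forces $I(H)\simeq\Sigma I(H-N[v])\simeq\bigvee_{n-1}\mathbb{S}^2$, whence $I(C_5\times K_n)\simeq\bigvee_{n-1}\mathbb{S}^2$.

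The main obstacle is the repeated verification that the intermediate complexes $I(G_{k-1}-N[v])$ collapse to a point: it rests on correctly tracking which neighbourhood containments survive after each column has been thinned, and on the bookkeeping guaranteeing a leftover column-$1$ vertex becomes isolated at the end of each reduction. The only genuinely structural steps are recognizing the thinned deletion $H-N[v]$ as $K_2\times K_n$ and the thinned $H-v$ as a product with an isolated factor; once those identifications are made, Propositions \ref{cofseq} and \ref{propcompl} and Lemma \ref{vecindad} do the rest.
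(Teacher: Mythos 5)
Your proof is correct, but it takes a genuinely different route from the paper. The paper argues by induction on $n$: it deletes $(u_1,n+1)$ and $(u_2,n+1)$ via Proposition \ref{cofseq}(a), then applies the splitting of Proposition \ref{cofseq}(b) at $(u_3,n+1)$, which requires checking that the inclusion $I(W_2)\hookrightarrow I(W_1)$ is null-homotopic, and so accumulates one $\mathbb{S}^2$ per inductive step from $I(C_5\times K_{n+1})\simeq I(C_5\times K_n)\vee\Sigma I(W_2)$. You instead fix $n$ and delete the entire $u_1$-column vertex by vertex: for each of the first $n-1$ deletions the complex $I(G_{k-1}-N[v])$ is contractible (your neighbourhood containments $N((u_3,k))\subseteq N((u_5,k))$ and, after erasing $(u_5,k)$, $N((u_4,k))\subseteq N((u_2,k))$ check out, and the leftover vertex of $A$ is indeed isolated, which is where $k\leq n-1$ is used), so only part (a) is needed; at the last vertex the roles flip, $I(H-v)=I(P_4\times K_n)\simeq *$, and the cofibre sequence degenerates to a single suspension $I(H)\simeq\Sigma I(H-N[v])\simeq\Sigma I(K_2\times K_n)$. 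This buys you two things: you never need the splitting statement (b) or any null-homotopy verification, since your middle term is outright contractible; and the count $n-1$ drops out in one stroke from Proposition \ref{propcompl} rather than being assembled inductively. The paper's inductive scheme, on the other hand, is the template it reuses for harder families, so it generalizes more readily, whereas your argument exploits the special structure of $C_5$ (deleting the $u_1$-fiber leaves $P_4\times K_n$, whose independence complex is contractible). One small remark: you invoke Theorem \ref{homopkkn} for $I(P_4\times K_n)\simeq *$, which in the paper appears after this proposition; your parenthetical direct argument via the product domination fact ($N_{P_4}(u_2)\subseteq N_{P_4}(u_4)$, then the isolated $u_5$-fiber) removes any appearance of forward reference, so keep it.
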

\begin{proof}
We will see that $I(C_5\times K_{n+1})\simeq I(C_5\times K_n)\vee\mathbb{S}^2$. 
Taking $G_0\cong C_5\times K_{n+1}$, we have:
$$N_{G_0}((u_1,n+1))=\bigcup_{i=1}^n\{(u_2,i),(u_5,i)\}$$
and taking $H_1=G_0-N_{G_0}[(u_1,n+1)]$ we have:
$$N_{H_1}((u_3,n+1))=\bigcup_{i=1}^n\{(u_4,i)\} \subseteq \bigcup_{i=1}^n\{(u_4,i),(u_1,i)\} = N_{H_1}((u_5,n+1))$$
and 
$$N_{H_1}((u_4,n+1))=\bigcup_{i=1}^n\{(u_3,i)\} \subseteq \bigcup_{i=1}^n\{(u_3,i), (u_1,i)\} = N_{H_1}((u_2,n+1)),$$
so that $I(H_1)\simeq I(H_1')$, where $H_1'=H_1-(u_2,n+1)-(u_5,n+1)$. Now, in $H_1'$ all the vertices 
of the form $(u_1,i)$ with $1\leq i\leq n$ are isolated, so $I(H_1')$ is contractible. 
Therefore, by Proposition \ref{cofseq}, $I(G_0)\simeq I(G_1)$, 
with $G_1=G_0-(u_1,n+1)$. We define $H_2=G_1-N_{G_1}[(u_2,n+1)]$, noting that 
$$N_{G_1}((u_2,n+1))=\bigcup_{i=1}^n\{(u_1,i),(u_3,i)\}$$
Then $N_{H_2}((u_2,i))\subseteq N_{H_2}((u_4,j))$ for $1\leq i,j\leq n$ and therefore 
$I(H_2)\simeq I(H_2')$ with $H_2'=H_2-(u_4,1)-(u_4,2)-\cdots-(u_4,n)$. In $H_2'$, $(u_5,n+1)$ is an 
isolated vertex, so $I(H_2')$ is contractible and, by Proposition \ref{cofseq}, $I(G_1)\simeq I(G_2)$, with 
$G_2=G_1-(u_2,n+1)$. 

Now, using the part (b) of Proposition \ref{cofseq}, we will see that 
$|I(G_2)|\simeq|I(W_1)|\vee|\Sigma I(W_2)|$, with $W_1=G_2-(u_3,n+1)$ and 
$W_2=G_2-N_{G_2}[(u_3,n+1)]$. In $W_2$, 
$$N_{W_2}((u_3,i))=\{(u_4,n+1)\}\subseteq N_{W_2}((u_5,j))$$ 
for all $1\leq i,j\leq n$, so $I(W_2)\simeq I(W_2-(u_5,1)-\cdots-(u_5,n))$ and 
$$W_2-(u_5,1)-\cdots-(u_5,n)\cong2K_{1,n}$$ Therefore $I(W_2)\simeq\mathbb{S}^1$.
For $W_1$, we first will see that $I(W_1)\simeq I(W_1-(u_4,n+1))$. For this, we take 
$W=W_1-N_{W_1}[(u_4,n+1)]$. In $W$, 
$$N_W((u_4,i))=\{(u_5,n+1)\}\subseteq N_W((u_1,j))$$
for all $1\leq i,j\leq n$, then 
$$I(W)\simeq I(W-(u_1,1)-\cdots-(u_1,n))\cong I(K_n^c\sqcup K_{1,n})\simeq*$$
Then $I(W_1)\simeq I(W_1-(u_4,n+1))$. In $W'=W_1-(u_4,n+1)$, 
$$N_{W'}((u_5,i))\subseteq N_{W'}((u_5,n+1))$$ for all $1\leq i\leq n$. Then 
$$I(W_1)\simeq I(W'-(u_5,n+1))\cong I(C_5\times K_n)\simeq\bigvee_{n-1}\mathbb{S}^2$$
Therefore, the inclusion $I(W_2)\hookrightarrow I(W_1)$ is null-homotopic and 
$$I(C_5\times K_{n+1})\simeq I(G_2)\simeq I(W_1)\vee\Sigma I(W_2)\simeq\bigvee_{n}\mathbb{S}^2$$
\end{proof}

\subsection{$C_k\times K_n$ for $k\geq 6$}
Now we will prove one of the main results of the paper, Theorem \ref{teomod3}, which tells us the homotopy type of the 
independence complex of $C_{3r}\times K_n$; for the other cycles Theorem \ref{teonotmod3} will give us the connectivity and all 
but two of the reduced homology groups. For this
we will need to calculate the homotopy type of the independence complex of various auxiliary graphs. 
The idea is to use the star cluster of 
a vertex and Theorem \ref{barmak} to get an decomposition of the complexes for 
which Proposition \ref{homocolimpegadoesferas} can be used, so the 
suspension of this union will have the same homotopy type as the independence complex of $C_k\times K_n$. The complexes in the union will 
be isomorphic to the independence complex of the graphs $G_{k,n}$ (Figure \ref{gkn}), which are isomorphic to 
$C_k\times K_n-N[u]-N[v]$ where $u$ and $v$ are adjacent vertices and their independence complexes are isomorphic 
to the intersection of their links. For the homotopy type of this family we will also need the homotopy type of the independence complex of 
$P_k\times K_n$ and the graph family $H_{k,n}$ (Figure \ref{hkn}), for which we will need the independence complex of the family $W_{k,n}$ 
(Figure \ref{wkn}). We also will need to see what the intersection of two or more complexes of the decomposition looks like, and for this we will need 
to see what happens with the independence complexes of other two families: $\mathring{H}_{k,n}$ and $\mathring{W}_{k,n}$. The idea for the 
calculation for the auxiliary families will be to use Lemma \ref{vecindad}, Proposition \ref{cofseq} or Theorem \ref{barmak} and Proposition \ref{homocolimpegadoesferas}.

\begin{theorem}\label{homopkkn}
For $n\geq2$, 
$$I(P_k\times K_n)\simeq\left\lbrace
\begin{array}{cc}
\displaystyle\bigvee_{(n-1)^r}\mathbb{S}^{2r-1} & \mbox{if } k=3r\\
* & \mbox{if } k=3r+1\\
\displaystyle\bigvee_{(n-1)^{r+1}}\mathbb{S}^{2r+1} & \mbox{if } k=3r+2
\end{array}
\right.$$
\end{theorem}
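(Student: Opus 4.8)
The plan is to establish the three-step recursion $I(P_k\times K_n)\simeq\bigvee_{n-1}\Sigma^2 I(P_{k-3}\times K_n)$ and run it against small base cases. Write $u_1,\dots,u_k$ for the path and $1,\dots,n$ for $K_n$, so that the vertices of $G:=P_k\times K_n$ are the pairs $(u_i,j)$ with $(u_i,j)\sim(u_{i'},j')$ iff $|i-i'|=1$ and $j\neq j'$. Two observations drive everything. First, $G$ is triangle-free: three pairwise adjacent product vertices would project to a triangle in the path $P_k$, which has none. Second, for $n\geq2$ the end vertex $v:=(u_1,1)$ is non-isolated, with $N(v)=\{(u_2,2),\dots,(u_2,n)\}$. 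Hence Theorem \ref{barmak} applies and gives $I(G)\simeq\Sigma(st(v)\cap SC(v))$, and since $SC(v)=\bigcup_{j=2}^n st((u_2,j))$, it remains to analyze the union $st(v)\cap SC(v)=\bigcup_{j=2}^n K_j$, where $K_j:=st(v)\cap st((u_2,j))$.

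Next I would identify each piece $K_j$ as an independence complex. A face lies in $K_j$ exactly when it avoids the closed neighborhoods $N[v]\cup N[(u_2,j)]$, so $K_j=I(G_j)$ for the induced subgraph $G_j$ on the surviving vertices; computing the forbidden set leaves precisely $(u_1,j),(u_2,1),(u_3,j)$ together with the full columns $u_4,\dots,u_k$. Thus $G_j$ is a $3$-path $(u_1,j)-(u_2,1)-(u_3,j)$ whose far end $(u_3,j)$ is glued onto $P_{k-3}\times K_n$ (the columns $u_4,\dots,u_k$), and in $G_j$ the vertex $(u_1,j)$ is a leaf. Applying Theorem \ref{barmak} once more at this leaf (whose star cluster is simply $st((u_2,1))$) collapses the pendant path, because $N[(u_1,j)]\subseteq N[(u_2,1)]$, and yields $K_j=I(G_j)\simeq\Sigma\,I(G_j-N[(u_2,1)])=\Sigma\,I(P_{k-3}\times K_n)$.

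The third step is to compute the higher intersections and feed the result into Proposition \ref{homocolimpegadoesferas}. For any $S\subseteq\{2,\dots,n\}$ with $|S|\geq2$, a face of $K_S=\bigcap_{j\in S}K_j$ must avoid $N[v]\cup\bigcup_{j\in S}N[(u_2,j)]$; but two distinct values $j$ already force all of columns $u_1$ and $u_3$ into the forbidden set, so the only neighbors of $(u_2,1)$ disappear and $(u_2,1)$ becomes isolated. Consequently $K_S$ is a cone, hence contractible, for every $|S|\geq2$. Now Proposition \ref{homocolimpegadoesferas} applies with all multiple intersections contractible, so each term $X_i$ with $i\geq2$ is a suspension of a contractible space and vanishes, leaving $st(v)\cap SC(v)=\bigcup_{j=2}^n K_j\simeq\bigvee_{j=2}^n K_j\simeq\bigvee_{n-1}\Sigma\,I(P_{k-3}\times K_n)$. (Equivalently, since all pairwise-and-higher intersections are contractible, Lemma \ref{lempegcontra} shows the relevant gluing locus is contractible, and Lemma \ref{homocolimpegado} can be applied directly.) Suspending once more gives the claimed recursion $I(G)\simeq\bigvee_{n-1}\Sigma^2 I(P_{k-3}\times K_n)$.

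Finally I would close the induction. The base cases are $I(P_1\times K_n)\simeq *$ (a simplex on $n$ isolated vertices), $I(P_2\times K_n)=I(K_2\times K_n)\simeq\bigvee_{n-1}\mathbb{S}^1$ by Proposition \ref{propcompl}, and $I(P_3\times K_n)\simeq\bigvee_{n-1}\mathbb{S}^1$ (the same argument with empty columns $u_4,\dots,u_k$, so each $G_j\cong P_3$ and $K_j\simeq\mathbb{S}^0$). A direct check confirms that the stated formula is invariant under $k\mapsto k+3$ together with $\Sigma^2$ and the extra factor $n-1$, matching the three residue classes modulo $3$. The main obstacle is the bookkeeping in the middle two steps: one must track exactly which vertices survive in $G_j$ and in $G_S$, and it is precisely the fact that any two distinct coordinates $j$ wipe out two entire columns — collapsing all higher intersections to contractible cones — that makes Proposition \ref{homocolimpegadoesferas} degenerate into a clean wedge with no interaction terms.
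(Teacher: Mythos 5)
Your proposal is correct, but it takes a genuinely different route from the paper. The paper's own proof is a five-line induction using the fold Lemma \ref{vecindad}: since $N((u_1,i))\subseteq N((u_3,i))$ in $P_{k+1}\times K_n$, the whole column $u_3$ can be deleted without changing the homotopy type, which disconnects the graph into $K_2\times K_n\sqcup P_{k-2}\times K_n$; the independence complex of a disjoint union is a join, so $I(P_{k+1}\times K_n)\simeq I(P_{k-2}\times K_n)*\bigvee_{n-1}\mathbb{S}^1\simeq\bigvee_{n-1}\Sigma^2 I(P_{k-2}\times K_n)$ --- exactly the recursion you derive, obtained without any gluing analysis. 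You instead run the star cluster machinery (Theorem \ref{barmak} at the corner $(u_1,1)$, identification of each piece $st(v)\cap st((u_2,j))$ as $I(G_j)\simeq\Sigma I(P_{k-3}\times K_n)$ via the pendant path, and contractibility of all higher intersections because two distinct indices $j$ wipe out columns $u_1$ and $u_3$, isolating $(u_2,1)$); all of these verifications check out, including the base cases and the consistency of the formula under $k\mapsto k+3$. What your approach buys is uniformity: it is precisely the strategy the paper reserves for the harder families $W_{k,n}$, $H_{k,n}$, $G_{k,n}$ and $C_{3r}\times K_n$, where no column fold is available, so your argument is a faithful warm-up for the main theorem; what the paper's approach buys is brevity and the explicit join structure, at the cost of being special to paths. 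Two cosmetic points: your forbidden set should be phrased with open neighborhoods $N(v)\cup N((u_2,j))$ rather than closed ones (harmless here, since $v$ and $(u_2,j)$ are adjacent the two unions coincide), and Lemma \ref{lempegcontra} as literally stated asks for \emph{all} $K_S$ with $S$ non-empty to be contractible, including singletons; your invocation --- contractibility only for $|S|\geq2$ --- matches how the paper itself uses that lemma, but strictly speaking you are using Proposition \ref{homocolimpegadoesferas} with all multiple intersections contractible, as you note.
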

\begin{proof}
The proof is by induction on $k$.
For $k=1$, $I(P_1\times K_n)\simeq*$ for any $n$. For $k=2$, 
$I(P_2\times K_n)=I(K_2\times K_n)$ and by Theorem \ref{propcompl} the homotopy type is as claimed. 
For $k=3$, 
$$I(P_k\times K_n)\simeq I(P_k-\{(u_3,i):\;1\leq i\leq n\}\times K_n)\cong I(K_2\times K_n)$$
Supose that for any $r\leq k$ the theorem is true. 
$$I(P_{k+1}\times K_n)\simeq I(P_{k+1}-\{(u_3,i):\;1\leq i\leq n\}\times K_n)\cong I(P_{k-2}\times K_n\sqcup K_2\times K_n)$$
Now
$$I(P_{k-2}\times K_n\sqcup K_2\times K_n)\simeq I(P_{k-2}\times K_n)*\bigvee_{n-1}\mathbb{S}^1\simeq\bigvee_{n-1}\Sigma^2I(P_{k-2}\times K_n)$$
The rest follows by induction.
\end{proof}

For $k\geq2$ and $n\geq3$ we define: 
\begin{itemize}
\item $W_{k,n}$ as the graph obtained from $P_k\times K_n$ by ading 
two new vertices $v_1,v_2$ and the edges $\{\{(u_1,i),v_1\}:\;i\neq2\}\cup\{\{(u_k,i),v_2\}:\;i\neq2\}$ (Figure \ref{wkn}).
\item $H_{k,n}$ as the graph obtained from $P_k\times K_n$ by ading 
two new vertices $v_1,v_2$ and the edges $\{\{(u_1,i),v_1\}:\;i\geq2\}\cup\{\{(u_k,i),v_2\}:\;i\neq2\}$ (Figure \ref{hkn}).
\item $G_{k,n}$ as the graph obtained from $H_{k,n}$ by ading two new vertices $w_1,w_2$ and the edges 
$\{v_1,w_1\},\{w_1,w_2\},\{w_2,v_2\}$ (Figure \ref{gkn}).
\end{itemize}

\begin{figure}
\centering
\subfigure[$W_{k,n}$]{\begin{tikzpicture}[line cap=round,line join=round,>=triangle 45,x=1cm,y=1cm]
\clip(-1.2,-1.5) rectangle (1,2);
\draw (-1,1)-- (-0.5,0.5);
\draw (-0.5,1)-- (0,0.5);
\draw (-0.5,1)-- (-1,0.5);
\draw (0,1)-- (-0.5,0.5);
\draw (-1,-0.5)-- (-0.5,0);
\draw (-0.5,-0.5)-- (0,0);
\draw (-0.5,-0.5)-- (-1,0);
\draw (0,-0.5)-- (-0.5,0);
\draw (-0.5,1.5)-- (-1,1);
\draw (-0.5,1.5)-- (0,1);
\draw (-0.5,1.5)-- (0.5,1);
\draw (-0.5,-1)-- (-1,-0.5);
\draw (-0.5,-1)-- (0,-0.5);
\draw (-0.5,-1)-- (0.5,-0.5);
\begin{scriptsize}
\fill [color=black] (-1,1) circle (1.5pt);
\fill [color=black] (-0.5,1) circle (1.5pt);
\fill [color=black] (0,1) circle (1.5pt);
\fill [color=black] (0.5,1) circle (1.5pt);
\fill [color=black] (-1,0.5) circle (1.5pt);
\fill [color=black] (-0.5,0.5) circle (1.5pt);
\fill [color=black] (0,0.5) circle (1.5pt);
\fill [color=black] (0.5,0.5) circle (1.5pt);
\fill [color=black] (-1,-0.5) circle (1.5pt);
\fill [color=black] (-0.5,-0.5) circle (1.5pt);
\fill [color=black] (0,-0.5) circle (1.5pt);
\fill [color=black] (0.5,-0.5) circle (1.5pt);
\fill [color=black] (-1,0) circle (1.5pt);
\fill [color=black] (-0.5,0) circle (1.5pt);
\fill [color=black] (0,0) circle (1.5pt);
\fill [color=black] (0.5,0) circle (1.5pt);
\fill [color=black] (-0.5,1.5) circle (1.5pt);
\fill [color=black] (-0.5,1) circle (1.5pt);
\fill [color=black] (-0.5,-1) circle (1.5pt);
\draw[color=black] (-0.5,1.7) node {$v_1$};
\draw[color=black] (-0.5,-1.2) node {$v_2$};
\draw[color=black] (0.25,1) node {$\cdots$};
\draw[color=black] (0.25,0.5) node {$\cdots$};
\draw[color=black] (0.25,-0.5) node {$\cdots$};
\draw[color=black] (0.25,0) node {$\cdots$};
\draw[color=black] (-1,0.25) node {$\vdots$};
\draw[color=black] (-0.5,0.25) node {$\vdots$};
\draw[color=black] (0,0.25) node {$\vdots$};
\draw[color=black] (0.5,0.25) node {$\vdots$};
\end{scriptsize}
\end{tikzpicture}\label{wkn}}
\subfigure[$H_{k,n}$]{\begin{tikzpicture}[line cap=round,line join=round,>=triangle 45,x=1cm,y=1cm]
\clip(-1.2,-1.5) rectangle (1,2);
\draw (-1,1)-- (-0.5,0.5);
\draw (-0.5,1)-- (0,0.5);
\draw (-0.5,1)-- (-1,0.5);
\draw (0,1)-- (-0.5,0.5);
\draw (-1,-0.5)-- (-0.5,0);
\draw (-0.5,-0.5)-- (0,0);
\draw (-0.5,-0.5)-- (-1,0);
\draw (0,-0.5)-- (-0.5,0);
\draw (-1,1.5)-- (-0.5,1);
\draw (-1,1.5)-- (0,1);
\draw (-1,1.5)-- (0.5,1);
\draw (-0.5,-1)-- (-1,-0.5);
\draw (-0.5,-1)-- (0,-0.5);
\draw (-0.5,-1)-- (0.5,-0.5);
\begin{scriptsize}
\fill [color=black] (-1,1) circle (1.5pt);
\fill [color=black] (-0.5,1) circle (1.5pt);
\fill [color=black] (0,1) circle (1.5pt);
\fill [color=black] (0.5,1) circle (1.5pt);
\fill [color=black] (-1,0.5) circle (1.5pt);
\fill [color=black] (-0.5,0.5) circle (1.5pt);
\fill [color=black] (0,0.5) circle (1.5pt);
\fill [color=black] (0.5,0.5) circle (1.5pt);
\fill [color=black] (-1,-0.5) circle (1.5pt);
\fill [color=black] (-0.5,-0.5) circle (1.5pt);
\fill [color=black] (0,-0.5) circle (1.5pt);
\fill [color=black] (0.5,-0.5) circle (1.5pt);
\fill [color=black] (-1,0) circle (1.5pt);
\fill [color=black] (-0.5,0) circle (1.5pt);
\fill [color=black] (0,0) circle (1.5pt);
\fill [color=black] (0.5,0) circle (1.5pt);
\fill [color=black] (-1,1.5) circle (1.5pt);
\fill [color=black] (-0.5,1) circle (1.5pt);
\fill [color=black] (-0.5,-1) circle (1.5pt);
\draw[color=black] (-1,1.7) node {$v_1$};
\draw[color=black] (-0.5,-1.2) node {$v_2$};
\draw[color=black] (0.25,1) node {$\cdots$};
\draw[color=black] (0.25,0.5) node {$\cdots$};
\draw[color=black] (0.25,-0.5) node {$\cdots$};
\draw[color=black] (0.25,0) node {$\cdots$};
\draw[color=black] (-1,0.25) node {$\vdots$};
\draw[color=black] (-0.5,0.25) node {$\vdots$};
\draw[color=black] (0,0.25) node {$\vdots$};
\draw[color=black] (0.5,0.25) node {$\vdots$};
\end{scriptsize}
\end{tikzpicture}\label{hkn}}
\subfigure[$G_{k,n}$]{\begin{tikzpicture}[line cap=round,line join=round,>=triangle 45,x=1cm,y=1cm]
\clip(-1.2,-1.5) rectangle (1.7,2);
\draw (-1,1)-- (-0.5,0.5);
\draw (-0.5,1)-- (0,0.5);
\draw (-0.5,1)-- (-1,0.5);
\draw (0,1)-- (-0.5,0.5);
\draw (-1,-0.5)-- (-0.5,0);
\draw (-0.5,-0.5)-- (0,0);
\draw (-0.5,-0.5)-- (-1,0);
\draw (0,-0.5)-- (-0.5,0);
\draw (-1,1.5)-- (-0.5,1);
\draw (-1,1.5)-- (0,1);
\draw (-1,1.5)-- (0.5,1);
\draw (-1,1.5)-- (1,1.5);
\draw (-0.5,-1)-- (-1,-0.5);
\draw (-0.5,-1)-- (0,-0.5);
\draw (-0.5,-1)-- (0.5,-0.5);
\draw (-0.5,-1)-- (1,-1);
\draw (1,-1)-- (1,1.5);
\begin{scriptsize}
\fill [color=black] (-1,1) circle (1.5pt);
\fill [color=black] (-0.5,1) circle (1.5pt);
\fill [color=black] (0,1) circle (1.5pt);
\fill [color=black] (0.5,1) circle (1.5pt);
\fill [color=black] (-1,0.5) circle (1.5pt);
\fill [color=black] (-0.5,0.5) circle (1.5pt);
\fill [color=black] (0,0.5) circle (1.5pt);
\fill [color=black] (0.5,0.5) circle (1.5pt);
\fill [color=black] (-1,-0.5) circle (1.5pt);
\fill [color=black] (-0.5,-0.5) circle (1.5pt);
\fill [color=black] (0,-0.5) circle (1.5pt);
\fill [color=black] (0.5,-0.5) circle (1.5pt);
\fill [color=black] (-1,0) circle (1.5pt);
\fill [color=black] (-0.5,0) circle (1.5pt);
\fill [color=black] (0,0) circle (1.5pt);
\fill [color=black] (0.5,0) circle (1.5pt);
\fill [color=black] (-1,1.5) circle (1.5pt);
\fill [color=black] (-0.5,1) circle (1.5pt);
\fill [color=black] (1,1.5) circle (1.5pt);
\fill [color=black] (1,-1) circle (1.5pt);
\fill [color=black] (-0.5,-1) circle (1.5pt);
\draw[color=black] (-1,1.7) node {$v_1$};
\draw[color=black] (-0.5,-1.2) node {$v_2$};
\draw[color=black] (1,1.7) node {$w_1$};
\draw[color=black] (1,-1.2) node {$w_2$};
\draw[color=black] (0.25,1) node {$\cdots$};
\draw[color=black] (0.25,0.5) node {$\cdots$};
\draw[color=black] (0.25,-0.5) node {$\cdots$};
\draw[color=black] (0.25,0) node {$\cdots$};
\draw[color=black] (-1,0.25) node {$\vdots$};
\draw[color=black] (-0.5,0.25) node {$\vdots$};
\draw[color=black] (0,0.25) node {$\vdots$};
\draw[color=black] (0.5,0.25) node {$\vdots$};
\end{scriptsize}
\end{tikzpicture}\label{gkn}}
\caption{}
\end{figure}

\begin{lem}
$$I(W_{k,n})\simeq\left\lbrace
\begin{array}{cc}
\displaystyle\bigvee_{n-1}\mathbb{S}^1 & \mbox{if } k=2\\
\Sigma I(W_{k-1,n})& \mbox{if } k=3r \mbox{ and } r\geq1\\
\Sigma^2I(W_{k-2,n})& \mbox{if } k=3r+1 \mbox{ and } r\geq1\\
\displaystyle\bigvee_{n-1}\Sigma^2I(H_{k-3,n}) & \mbox{if } k=3r+2 \mbox{ and } r\geq1
\end{array}
\right.$$
\end{lem}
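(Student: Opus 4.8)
The plan is to induct on $k$, peeling $W_{k,n}$ from the ends carrying $v_1$ and $v_2$ and feeding the pieces into the cofibre sequence of Proposition \ref{cofseq}, with the trichotomy in $k\bmod 3$ mirroring that of $I(P_k\times K_n)$ in Theorem \ref{homopkkn}. For the base case $k=2$, the graph $W_{2,n}$ is $K_2\times K_n$ together with $v_1,v_2$, and a direct inspection gives $N(v_1)=\{(u_1,i):i\neq 2\}=N((u_2,2))$ and, symmetrically, $N(v_2)=N((u_1,2))$. Hence $N((u_2,2))\subseteq N(v_1)$ and $N((u_1,2))\subseteq N(v_2)$, so Lemma \ref{vecindad} lets me delete $v_1$ and then $v_2$ without changing the homotopy type; what remains is $K_2\times K_n$, and Proposition \ref{propcompl} gives $I(W_{2,n})\simeq\bigvee_{n-1}\mathbb{S}^1$.

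For the inductive step ($k\geq 3$) I first record the structural fact that $W_{k,n}-N[v_2]\cong W_{k-1,n}$: deleting $v_2$ together with the vertices $(u_k,i)$, $i\neq 2$, leaves the single vertex $(u_k,2)$ attached to $\{(u_{k-1},i):i\neq 2\}$, which plays exactly the role of $v_2$ for the path on layers $1,\dots,k-1$. Applying Proposition \ref{cofseq} to $v_2$ therefore presents $I(W_{k,n})$ as the cofibre of the inclusion $I(W_{k-1,n})\hookrightarrow I(W_{k,n}-v_2)$, so everything reduces to understanding the auxiliary complex $I(W_{k,n}-v_2)$, where $W_{k,n}-v_2$ is $P_k\times K_n$ carrying the single extra vertex $v_1$ on layer $1$.

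To analyze this auxiliary complex I peel from the $v_1$ end: deleting $N[v_1]$ shifts the configuration one layer inward (the surviving $(u_1,2)$ becomes a new copy of $v_1$), so $(W_{k,n}-v_2)-N[v_1]$ is the same type of graph on $P_{k-1}\times K_n$, while $(W_{k,n}-v_2)-v_1=P_k\times K_n$. Feeding this into Proposition \ref{cofseq} and using the values of $I(P_k\times K_n)$ from Theorem \ref{homopkkn}, an induction shows that $I(W_{k,n}-v_2)$ is contractible when $k\equiv 0,1\pmod 3$ and is homotopy equivalent to $I(P_k\times K_n)$ when $k\equiv 2\pmod 3$. In the two contractible cases the cofibre collapses to a single suspension, giving $I(W_{k,n})\simeq\Sigma I(W_{k-1,n})$; this is the stated formula for $k=3r$, and for $k=3r+1$ I rewrite $W_{k-1,n}=W_{3r,n}$ using the already-proven $k\equiv 0$ case to obtain $\Sigma^2 I(W_{k-2,n})$. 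The one genuinely nontrivial input here is the $k\equiv 0$ step, where contractibility of $I(W_{k,n}-v_2)$ requires that the inclusion $(W_{k,n}-v_2)-N[v_1]\hookrightarrow(W_{k,n}-v_2)-v_1$ induce a homotopy \emph{equivalence} on independence complexes, rather than merely a map.

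The case $k\equiv 2\pmod 3$ is where the real difficulty lies, and it is the reason the auxiliary family $H_{k,n}$ is introduced. Here $I(W_{k,n}-v_2)$ is not contractible, so the cofibre is not a mere suspension; moreover the clean ``delete layer $3$'' disconnection that produces the $\bigvee_{n-1}\Sigma^2$ factor for $I(P_k\times K_n)$ (via the join with $I(K_2\times K_n)\simeq\bigvee_{n-1}\mathbb{S}^1$) is obstructed, because the edge from $v_1$ to $(u_1,i)$ destroys the domination $N((u_1,i))\subseteq N((u_3,i))$ that one would need to strip layer $3$. The plan is to carry out the disconnection after first transforming the $v_1$ end: a sequence of Lemma \ref{vecindad} moves converts the $W$-type attachment of $v_1$ (missing index $2$) into the $H$-type attachment (missing index $1$), at which point layer $3$ can be removed to split off a $K_2\times K_n$ summand and leave exactly $H_{k-3,n}$, yielding $I(W_{k,n})\simeq I(K_2\times K_n)\ast I(H_{k-3,n})\simeq\bigvee_{n-1}\Sigma^2 I(H_{k-3,n})$. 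I expect the main obstacle to be precisely this last step: verifying that the end-transformation is realized by homotopy equivalences and that the connecting map is null-homotopic so that Proposition \ref{cofseq}(b) applies, since this is exactly where the interference between $v_1$ and the product structure must be resolved and where the companion computation of $I(H_{k,n})$ is needed to close the mutual recursion between the two families.
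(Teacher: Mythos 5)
Your skeleton for $k=2$, $k=3r$ and $k=3r+1$ is essentially the paper's (the paper runs the cofibre sequence at $v_1$ rather than $v_2$, which is equivalent by the end-swapping symmetry of $W_{k,n}$), and your identifications $W_{k,n}-N[v_2]\cong W_{k-1,n}$ and $(W_{k,n}-v_2)-N[v_1]\cong W_{k-1,n}-v_2$ are correct. But your sub-induction for the contractibility of $I(W_{k,n}-v_2)$ has a real hole exactly where you flag it: for $k=3r$ the cofibre sequence exhibits $I(W_{k,n}-v_2)$ as the cofibre of a map between two wedges of $(n-1)^r$ copies of $\mathbb{S}^{2r-1}$, and this is just as consistent with the map being null-homotopic (cofibre $\simeq\bigvee\mathbb{S}^{2r-1}\vee\bigvee\mathbb{S}^{2r}$) as with it being an equivalence; nothing in your argument decides which. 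Since the $k\equiv 0$ case seeds the whole sub-induction, the $3r$ and $3r+1$ cases are not actually established. The paper sidesteps this entirely with a direct domination argument: in $W_{3r,n}-v_1$ one has $N((u_1,i))\subseteq N((u_3,i))$, so Lemma \ref{vecindad} strips row $u_3$, then row $u_6$, and so on, until $v_2$ becomes isolated, giving contractibility outright. This gap is repairable by substituting that argument.

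The case $k=3r+2$ is the fatal one, and there your proposal is a plan whose central mechanism fails. Lemma \ref{vecindad} only deletes vertices; it cannot ``convert'' the attachment of $v_1$ from missing-index-$2$ to missing-index-$1$, because the $W$-versus-$H$ distinction is whether the two pendant vertices miss the \emph{same} column, a global alignment between the two ends that vertex deletions do not alter. More decisively, no induced subgraph of $W_{k,n}$ can disconnect as $K_2\times K_n\sqcup H_{k-3,n}$: the $H$-type pendant on the layer-$4$ side would have to be a surviving layer-$3$ vertex $(u_3,j)$, which is adjacent to every layer-$2$ vertex except $(u_2,j)$, so it cannot be separated from an intact copy of $K_2\times K_n$ on layers $1$--$2$. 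The equivalence $\bigvee_{n-1}\Sigma^2 I(H_{k-3,n})\simeq I(K_2\times K_n)* I(H_{k-3,n})$ is a numerical coincidence, not the shadow of a graph splitting. The paper's actual mechanism is different in kind: by Theorem \ref{barmak}, $I(W_{k,n})\simeq\Sigma\bigl(st(v_1)\cap SC(v_1)\bigr)$, and each of the $n-1$ pieces $st(v_1)\cap st((u_1,i))\cong I(T)$ with $T=W_{k,n}-\bigl(N[v_1]\cup N((u_1,1))\bigr)$; inside $T$ the vertex $(u_1,2)$ has unique neighbor $(u_2,1)$, so deleting $(u_3,i)$ for $i\geq2$ splits off a bare $K_2$ (one suspension, not a $K_2\times K_n$ join factor) and leaves $(u_3,1)$ as the new $H$-type pendant, whence $I(T)\simeq\Sigma I(H_{k-3,n})$; the multiplicity $n-1$ then comes from gluing the $n-1$ pieces along contractible intersections via Lemma \ref{lempegcontra}, and the outer suspension finishes. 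You supply no substitute for this decomposition --- you yourself call it the ``main obstacle'' --- so the case that genuinely requires the $H$-family, and with it the lemma, remains unproven.
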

\begin{proof}
For $k=2$, $N(v_1)=N((u_2,2))$ and $N(v_2)=N((u_1,2))$. Therefore 
$$I(W_{2,n})\simeq I(W_{2,n}-v_1-v_2)\cong I(K_2\times K_n)\simeq\bigvee_{n-1}\mathbb{S}^1$$

For $k=3r$, in $W_{3r,n}-v_1$ we have that $N((u_1,i))\subseteq N((u_3,i))$ for all $1\leq i\leq n$, 
therefore 
$$I(W_{3r,n}-v_1)\simeq I(W_1)$$
with $W_1=W_{3r,n}-\left\lbrace(u_3,i)):\; 1\leq i\leq n\right\rbrace$. In $W_1$, we have that 
$N((u_4,i))\subseteq N((u_6,i))$ for all $1\leq i\leq n$, 
therefore 
$$I(W_1)\simeq I(W_2)$$
with $W_2=W_1-\left\lbrace(u_6,i)):\; 1\leq i\leq n\right\rbrace$. We keep doing this until we have erased 
all the vertices of the form $(u_{3j},i)$ for $1\leq j\leq r$ and $1\leq i \leq n$, in this new graph $W_{3r}$ 
the vertex $v_2$ is isolated, and thus $I(W_{3r})\simeq*$. Therefore
$$I(W_{3r,n})\simeq\Sigma I\left(W_{3r,n}-N[v_1]\right)\cong\Sigma I(W_{3r-1,n})$$

For $k=3r+1$, we do the same as in the last case, we take $W_{3r+1,n}-v_1$ and erase all the vertices 
of the form $(u_{3j},i)$ for $1\leq j\leq r$ and $1\leq i \leq n$, and call this graph $W_{3r}$. In $W_{3r}$, 
the vertex $(u_{3r+1},2)$ is an isolated vertex, therefore $I(W_{3r})\simeq*$ and 
$$I(W_{3r+1,n})\simeq\Sigma I\left(W_{3r+1,n}-N[v_1]\right)\cong\Sigma I(W_{3r,n})\simeq\Sigma^2I(W_{3r-2,n})$$

\begin{figure}
\centering
\subfigure[$G_{v_1}$]{\begin{tikzpicture}[line cap=round,line join=round,>=triangle 45,x=1cm,y=1cm]
\clip(-1.2,-2) rectangle (1,2);
\draw (-0.5,1)-- (0,0.5);
\draw (-0.5,1)-- (-1,0.5);
\draw (-1,-0.5)-- (-0.5,0);
\draw (-0.5,-0.5)-- (0,0);
\draw (-0.5,-0.5)-- (-1,0);
\draw (0,-0.5)-- (-0.5,0);
\draw (-0.5,-1.5)-- (-1,-1);
\draw (-0.5,-1.5)-- (0,-1);
\draw (-0.5,-1.5)-- (0.5,-1);
\draw (-1,-0.5)-- (-0.5,-1);
\draw (-0.5,-0.5)-- (-1,-1);
\draw (-0.5,-0.5)-- (0,-1);
\draw (-0.5,-1)-- (0,-0.5);
\draw (-1,0.5)-- (-0.5,0);
\draw (-1,0)-- (-0.5,0.5);
\draw (-0.5,0.5)-- (0,0);
\draw (-0.5,0)-- (0,0.5);
\begin{scriptsize}
\fill [color=black] (-0.5,1) circle (1.5pt);
\fill [color=black] (-1,0.5) circle (1.5pt);
\fill [color=black] (-0.5,0.5) circle (1.5pt);
\fill [color=black] (0,0.5) circle (1.5pt);
\fill [color=black] (0.5,0.5) circle (1.5pt);
\fill [color=black] (-1,-0.5) circle (1.5pt);
\fill [color=black] (-0.5,-0.5) circle (1.5pt);
\fill [color=black] (0,-0.5) circle (1.5pt);
\fill [color=black] (0.5,-0.5) circle (1.5pt);
\fill [color=black] (-1,0) circle (1.5pt);
\fill [color=black] (-0.5,0) circle (1.5pt);
\fill [color=black] (0,0) circle (1.5pt);
\fill [color=black] (0.5,0) circle (1.5pt);
\fill [color=black] (-0.5,1.5) circle (1.5pt);
\fill [color=black] (-0.5,1) circle (1.5pt);
\fill [color=black] (-0.5,-1.5) circle (1.5pt);
\fill [color=black] (-1,-1) circle (1.5pt);
\fill [color=black] (-0.5,-1) circle (1.5pt);
\fill [color=black] (0,-1) circle (1.5pt);
\fill [color=black] (0.5,-1) circle (1.5pt);
\draw[color=black] (-0.5,1.7) node {$v_1$};
\draw[color=black] (-0.5,-1.7) node {$v_2$};
\draw[color=black] (0.25,0.5) node {$\cdots$};
\draw[color=black] (0.25,-0.5) node {$\cdots$};
\draw[color=black] (0.25,0) node {$\cdots$};
\end{scriptsize}
\end{tikzpicture}\label{stwknv1}}
\subfigure[$G_{(u_1,1)}$]{\begin{tikzpicture}[line cap=round,line join=round,>=triangle 45,x=1cm,y=1cm]
\clip(-1.2,-2) rectangle (1,2);
\draw (-0.5,1)-- (-1,0.5);
\draw (0,1)-- (-1,0.5);
\draw (-1,0.5)-- (0.5,1);
\draw (-1,-0.5)-- (-0.5,0);
\draw (-0.5,-0.5)-- (0,0);
\draw (-0.5,-0.5)-- (-1,0);
\draw (0,-0.5)-- (-0.5,0);
\draw (-0.5,-1.5)-- (-1,-1);
\draw (-0.5,-1.5)-- (0,-1);
\draw (-0.5,-1.5)-- (0.5,-1);
\draw (-1,-0.5)-- (-0.5,-1);
\draw (-0.5,-0.5)-- (-1,-1);
\draw (-0.5,-0.5)-- (0,-1);
\draw (-0.5,-1)-- (0,-0.5);
\draw (-1,0.5)-- (-0.5,0);
\draw (-1,0.5)-- (0,0);
\draw (-1,0.5)-- (0.5,0);
\begin{scriptsize}
\fill [color=black] (-1,1) circle (1.5pt);
\fill [color=black] (-0.5,1) circle (1.5pt);
\fill [color=black] (0,1) circle (1.5pt);
\fill [color=black] (0.5,1) circle (1.5pt);
\fill [color=black] (-1,0.5) circle (1.5pt);
\fill [color=black] (-1,-0.5) circle (1.5pt);
\fill [color=black] (-0.5,-0.5) circle (1.5pt);
\fill [color=black] (0,-0.5) circle (1.5pt);
\fill [color=black] (0.5,-0.5) circle (1.5pt);
\fill [color=black] (-1,0) circle (1.5pt);
\fill [color=black] (-0.5,0) circle (1.5pt);
\fill [color=black] (0,0) circle (1.5pt);
\fill [color=black] (0.5,0) circle (1.5pt);
\fill [color=black] (-0.5,1) circle (1.5pt);
\fill [color=black] (-0.5,-1.5) circle (1.5pt);
\fill [color=black] (-1,-1) circle (1.5pt);
\fill [color=black] (-0.5,-1) circle (1.5pt);
\fill [color=black] (0,-1) circle (1.5pt);
\fill [color=black] (0.5,-1) circle (1.5pt);
\draw[color=black] (-0.5,-1.7) node {$v_2$};
\draw[color=black] (0.25,1) node {$\cdots$};
\draw[color=black] (0.25,-0.5) node {$\cdots$};
\draw[color=black] (0.25,0) node {$\cdots$};
\end{scriptsize}
\end{tikzpicture}\label{stwknw}}
\subfigure[$T$]{\begin{tikzpicture}[line cap=round,line join=round,>=triangle 45,x=1cm,y=1cm]
\clip(-1.2,-2) rectangle (1,2);
\draw (-0.5,1)-- (-1,0.5);
\draw (-1,-0.5)-- (-0.5,0);
\draw (-0.5,-0.5)-- (0,0);
\draw (-0.5,-0.5)-- (-1,0);
\draw (0,-0.5)-- (-0.5,0);
\draw (-0.5,-1.5)-- (-1,-1);
\draw (-0.5,-1.5)-- (0,-1);
\draw (-0.5,-1.5)-- (0.5,-1);
\draw (-1,-0.5)-- (-0.5,-1);
\draw (-0.5,-0.5)-- (-1,-1);
\draw (-0.5,-0.5)-- (0,-1);
\draw (-0.5,-1)-- (0,-0.5);
\draw (-1,0.5)-- (-0.5,0);
\draw (-1,0.5)-- (0,0);
\draw (-1,0.5)-- (0.5,0);
\begin{scriptsize}
\fill [color=black] (-0.5,1) circle (1.5pt);
\fill [color=black] (-1,0.5) circle (1.5pt);
\fill [color=black] (-1,-0.5) circle (1.5pt);
\fill [color=black] (-0.5,-0.5) circle (1.5pt);
\fill [color=black] (0,-0.5) circle (1.5pt);
\fill [color=black] (0.5,-0.5) circle (1.5pt);
\fill [color=black] (-1,0) circle (1.5pt);
\fill [color=black] (-0.5,0) circle (1.5pt);
\fill [color=black] (0,0) circle (1.5pt);
\fill [color=black] (0.5,0) circle (1.5pt);
\fill [color=black] (-0.5,1) circle (1.5pt);
\fill [color=black] (-0.5,-1) circle (1.5pt);
\fill [color=black] (-0.5,-1.5) circle (1.5pt);
\fill [color=black] (-1,-1) circle (1.5pt);
\fill [color=black] (-0.5,-1) circle (1.5pt);
\fill [color=black] (0,-1) circle (1.5pt);
\fill [color=black] (0.5,-1) circle (1.5pt);
\draw[color=black] (-0.5,-1.7) node {$v_2$};
\draw[color=black] (0.25,-0.5) node {$\cdots$};
\draw[color=black] (0.25,0) node {$\cdots$};
\end{scriptsize}
\end{tikzpicture}\label{stwknv1capstwknw}}
\caption{}
\end{figure}

For $k=3r+2$, by Theorem \ref{barmak}, we have that 
$$I(W_{k,n})\simeq\Sigma\left(st(v_1)\cap SC(v_1)\right)$$ 
Now 
$$st(v_1)\cap SC(v_1)=\bigcup_{w\in N_{W_{k,n}}(v_1)}\left(st(v_1)\cap st(w)\right)$$

For any vertex $w$, $st(w)\cong I(G_w)$, with  
$G_w=W_{k,n}-N(w)$ (Figures \ref{stwknv1},\ref{stwknw}).

For any neighbor of $v_1$, $st(v_1)\cap st(w)\cong st(v_1)\cap st((u_1,1))=I(T)$ 
with 
$$T=W_{k,n}-\left(N_{W_{k,n}}[v_1]\cup N_{W_{k,n}}((u_1,1))\right)$$
(Figure \ref{stwknv1capstwknw}). Now, because 
$N_{T}((u_1,2))\subset N_{T}((u_i,3))$ for any $i\geq2$, we see that 
$$I(T)\simeq\Sigma I(H_{k-3,n}).$$
Now, for any $(u_1,i),(u_1,j)$ such that $i$, $j$ and $2$ are three distinct numbers, if we set $K_i=st(v_1)\cap st((u_1,i))$, 
then $K_i\cap K_j\simeq*$ because it is a cone, the vertex $(u_1,2)$ is an isolated vertex in the corresponding 
subgraph. By Lemma \ref{lempegcontra}, 
$$st_{I(W_{k,n})}(v_1)\cap SC(v_1)\simeq\bigvee_{n-1}\Sigma I(H_{k-3,n})$$
\end{proof}

\begin{lem}\label{lemhkn}
For $k\geq2$, $n\geq3$ and $r\geq2$:
$$I(H_{k,n})\simeq\left\lbrace
\begin{array}{cc}
\displaystyle\Sigma I\left(H_{k-1,n}\right) & \mbox{if } k=3r\\
\displaystyle\Sigma^2I\left(H_{k-2,n}\right) & \mbox{if } k=3r+1\\
\displaystyle\left(\bigvee_{n-1}\Sigma^4I(H_{k-6,n})\right)\vee\left(\bigvee_{n-2}\Sigma^2I(H_{k-3,n})\right) & \mbox{if } k=3r+2
\end{array}
\right.$$
\end{lem}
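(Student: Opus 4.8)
The plan is to induct on $k$ and treat the three residues of $k$ modulo $3$ separately, using throughout two structural facts about $v_1$ in $H_{k,n}$. First, $v_1$ is non-isolated and lies in no triangle, because its neighbours $(u_1,i)$ with $i\neq1$ sit in a common $K_n$-fibre and are therefore pairwise non-adjacent. Second, deleting its closed neighbourhood returns a smaller member of the family, $H_{k,n}-N[v_1]\cong H_{k-1,n}$: the surviving vertex $(u_1,1)$ becomes the new $v_1'$ (attached to $(u_2,i)=(u_1',i)$ for $i\neq1$), the path $u_2,\dots,u_k$ becomes $u_1',\dots,u_{k-1}'$, and $v_2$ keeps its role. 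Both facts are immediate from the definition of $H_{k,n}$.

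For $k=3r$ and $k=3r+1$ I would show that $I(H_{k,n}-v_1)$ is contractible and then read off the suspension from Proposition \ref{cofseq}: once $I(H_{k,n}-v_1)\simeq *$, the cofibre sequence forces $I(H_{k,n})\simeq\Sigma I(H_{k,n}-N[v_1])\cong\Sigma I(H_{k-1,n})$. Contractibility comes from repeated use of Lemma \ref{vecindad}: in $H_{k,n}-v_1$ one has $N((u_{3j-2},i))\subseteq N((u_{3j},i))$, so the fibres $(u_3,\cdot),(u_6,\cdot),\dots$ may be deleted one after another (deleting a vertex of one fibre does not disturb the inclusions in the others). When $k=3r$ this strips every neighbour of $v_2$ and leaves $v_2$ isolated; when $k=3r+1$ it leaves $(u_{3r+1},2)$ isolated, since its only potential neighbours lay in the just-deleted fibre $(u_{3r},\cdot)$ and it is not joined to $v_2$. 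Either way the complex is a cone. This gives $I(H_{3r,n})\simeq\Sigma I(H_{3r-1,n})$ directly, and for $k=3r+1$ one further application of the $k\equiv0$ identity just obtained rewrites $\Sigma I(H_{3r,n})$ as $\Sigma^2 I(H_{3r-1,n})$.

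The delicate case is $k=3r+2$, where the novelty compared with $W_{k,n}$ is that the fibre index $1$ avoided by $v_1$ differs from the index $2$ avoided by $v_2$, and this is exactly what splits the answer into two summands. By Theorem \ref{barmak}, $I(H_{k,n})\simeq\Sigma(st(v_1)\cap SC(v_1))$, and $st(v_1)\cap SC(v_1)=\bigcup_{i=2}^n K_i$ with $K_i=st(v_1)\cap st((u_1,i))\cong I\big(H_{k,n}-N[v_1]-N[(u_1,i)]\big)$. In each of these graphs the pendant vertex $(u_1,1)$ lets me fold away all of the fibre $(u_3,\cdot)$ except one vertex, after which $\{(u_1,1),(u_2,i)\}$ splits off as an isolated edge; joining its $\mathbb{S}^0$ with the rest gives $K_i\simeq\Sigma I(R_i)$, where $R_i$ is carried by $(u_3,i)$ together with $\{u_4,\dots,u_k\}\times K_n$ and $v_2$. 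The point is that $(u_3,i)$ now plays the role of a new $v_1'$ avoiding index $i$: for $i\geq3$ this index differs from the index $2$ that $v_2$ avoids, so $R_i\cong H_{k-3,n}$ and $K_i\simeq\Sigma I(H_{k-3,n})$; for $i=2$ both auxiliary vertices avoid the same index $2$, so $R_2\cong W_{k-3,n}$ and, by the preceding lemma, $K_2\simeq\Sigma I(W_{k-3,n})\simeq\bigvee_{n-1}\Sigma^3 I(H_{k-6,n})$.

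Finally I would glue the pieces. For $i\neq j$ the intersection $K_i\cap K_j$ is computed from $H_{k,n}-N[v_1]-N[(u_1,i)]-N[(u_1,j)]$; since the two closed stars jointly delete the entire fibre $(u_2,\cdot)$, the vertex $(u_1,1)$ becomes isolated, so this intersection, and hence every higher one, is contractible. Lemma \ref{lempegcontra} then yields $st(v_1)\cap SC(v_1)\simeq\bigvee_{i=2}^n K_i\simeq\big(\bigvee_{n-1}\Sigma^3 I(H_{k-6,n})\big)\vee\big(\bigvee_{n-2}\Sigma I(H_{k-3,n})\big)$, and applying the outer suspension from Theorem \ref{barmak} produces exactly $\big(\bigvee_{n-1}\Sigma^4 I(H_{k-6,n})\big)\vee\big(\bigvee_{n-2}\Sigma^2 I(H_{k-3,n})\big)$. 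The main obstacle is the bookkeeping in this last case: correctly tracking which fibre index each auxiliary vertex avoids, so as to recognise $R_2$ as a $W$-graph and each $R_i$ ($i\geq3$) as an $H$-graph, and verifying the contractibility of the intersections so that Lemma \ref{lempegcontra} legitimately applies. The hypothesis $r\geq2$ is precisely what keeps $k-6\geq2$ and makes the $W_{k-3,n}$-formula of the preceding lemma available.
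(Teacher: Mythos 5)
Your proposal is correct and takes essentially the same route as the paper's proof: for $k\equiv0,1\pmod 3$ the same iterated Lemma \ref{vecindad} deletions of the fibres $u_3,u_6,\dots$ leaving $v_2$ (resp.\ $(u_{3r+1},2)$) isolated, followed by the cofibre sequence of Proposition \ref{cofseq}; and for $k=3r+2$ the same decomposition of $st(v_1)\cap SC(v_1)$ via Theorem \ref{barmak} into $n-1$ pieces, one homotopy equivalent to $\Sigma I(W_{k-3,n})$ and $n-2$ to $\Sigma I(H_{k-3,n})$, glued by Lemma \ref{lempegcontra} since $(u_1,1)$ is isolated in every intersection. The only cosmetic difference is ordering: you fold away the $u_3$-fibre first and recognize $K_2\sqcup W_{k-3,n}$ or $K_2\sqcup H_{k-3,n}$, while the paper first identifies the pieces as leaf-augmented copies of $W_{k-2,n}$ and $H_{k-2,n}$ and then performs the equivalent reduction.
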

\begin{proof}
\begin{figure}
\centering
\subfigure[$G$]{\begin{tikzpicture}[line cap=round,line join=round,>=triangle 45,x=1cm,y=1cm]
\clip(-1.2,-2) rectangle (1,2);
\draw (-1,1)-- (-0.5,0.5);
\draw (-0.5,1)-- (0,0.5);
\draw (-0.5,1)-- (-1,0.5);
\draw (0,1)-- (-0.5,0.5);
\draw (-1,-0.5)-- (-0.5,0);
\draw (-0.5,-0.5)-- (0,0);
\draw (-0.5,-0.5)-- (-1,0);
\draw (0,-0.5)-- (-0.5,0);
\draw (-1,1.5)-- (-0.5,1);
\draw (-0.5,1.5)-- (0,1);
\draw (-1,1)-- (-0.5,1.5);
\draw (-0.5,1)-- (0,1.5);
\draw (-1,-1)-- (-0.5,-0.5);
\draw (-0.5,-1)-- (0,-0.5);
\draw (-1,-0.5)-- (-0.5,-1);
\draw (-0.5,-0.5)-- (0,-1);
\draw (-0.5,-1.5)-- (-1,-1);
\draw (-0.5,-1.5)-- (0,-1);
\draw (-0.5,-1.5)-- (0.5,-1);
\begin{scriptsize}
\fill [color=black] (-0.5,1.5) circle (1.5pt);
\fill [color=black] (0,1.5) circle (1.5pt);
\fill [color=black] (0.5,1.5) circle (1.5pt);
\fill [color=black] (-1,1) circle (1.5pt);
\fill [color=black] (-0.5,1) circle (1.5pt);
\fill [color=black] (0,1) circle (1.5pt);
\fill [color=black] (0.5,1) circle (1.5pt);
\fill [color=black] (-1,0.5) circle (1.5pt);
\fill [color=black] (-0.5,0.5) circle (1.5pt);
\fill [color=black] (0,0.5) circle (1.5pt);
\fill [color=black] (0.5,0.5) circle (1.5pt);
\fill [color=black] (-1,-0.5) circle (1.5pt);
\fill [color=black] (-0.5,-0.5) circle (1.5pt);
\fill [color=black] (0,-0.5) circle (1.5pt);
\fill [color=black] (0.5,-0.5) circle (1.5pt);
\fill [color=black] (-1,0) circle (1.5pt);
\fill [color=black] (-0.5,0) circle (1.5pt);
\fill [color=black] (0,0) circle (1.5pt);
\fill [color=black] (0.5,0) circle (1.5pt);
\fill [color=black] (-1,1.5) circle (1.5pt);
\fill [color=black] (-0.5,1) circle (1.5pt);
\fill [color=black] (-0.5,-1.5) circle (1.5pt);
\fill [color=black] (-1,-1) circle (1.5pt);
\fill [color=black] (-0.5,-1) circle (1.5pt);
\fill [color=black] (0,-1) circle (1.5pt);
\fill [color=black] (0.5,-1) circle (1.5pt);
\draw[color=black] (-0.5,-1.8) node {$v_2$};
\draw[color=black] (0.25,1) node {$\cdots$};
\draw[color=black] (0.25,0.5) node {$\cdots$};
\draw[color=black] (0.25,-0.5) node {$\cdots$};
\draw[color=black] (0.25,0) node {$\cdots$};
\draw[color=black] (0.25,1.5) node {$\cdots$};
\draw[color=black] (0.25,-1) node {$\cdots$};
\draw[color=black] (-1,0.25) node {$\vdots$};
\draw[color=black] (-0.5,0.25) node {$\vdots$};
\draw[color=black] (0,0.25) node {$\vdots$};
\draw[color=black] (0.5,0.25) node {$\vdots$};
\end{scriptsize}
\end{tikzpicture}\label{h3rn-v1}}
\subfigure[$H_{3r+2,n}$]{\begin{tikzpicture}[line cap=round,line join=round,>=triangle 45,x=1cm,y=1cm]
\clip(-1.2,-2) rectangle (1,2);
\draw (-1,1)-- (-0.5,0.5);
\draw (-0.5,1)-- (0,0.5);
\draw (-0.5,1)-- (-1,0.5);
\draw (0,1)-- (-0.5,0.5);
\draw (-1,-0.5)-- (-0.5,0);
\draw (-0.5,-0.5)-- (0,0);
\draw (-0.5,-0.5)-- (-1,0);
\draw (0,-0.5)-- (-0.5,0);
\draw (-1,1.5)-- (-0.5,1);
\draw (-1,-1)-- (-0.5,-0.5);
\draw (-0.5,-1)-- (0,-0.5);
\draw (-1,-0.5)-- (-0.5,-1);
\draw (-0.5,-0.5)-- (0,-1);
\draw (-0.5,-1.5)-- (-1,-1);
\draw (-0.5,-1.5)-- (0,-1);
\draw (-0.5,-1.5)-- (0.5,-1);
\draw (-1,1.5)-- (0,1);
\draw (-1,1.5)-- (0.5,1);
\draw (-1,0.5)-- (-0.5,0);
\draw (-0.5,0.5)-- (0,0);
\draw (-1,0)-- (-0.5,0.5);
\draw (-0.5,0)-- (0,0.5);
\begin{scriptsize}
\fill [color=black] (-1,1) circle (1.5pt);
\fill [color=black] (-0.5,1) circle (1.5pt);
\fill [color=black] (0,1) circle (1.5pt);
\fill [color=black] (0.5,1) circle (1.5pt);
\fill [color=black] (-1,0.5) circle (1.5pt);
\fill [color=black] (-0.5,0.5) circle (1.5pt);
\fill [color=black] (0,0.5) circle (1.5pt);
\fill [color=black] (0.5,0.5) circle (1.5pt);
\fill [color=black] (-1,-0.5) circle (1.5pt);
\fill [color=black] (-0.5,-0.5) circle (1.5pt);
\fill [color=black] (0,-0.5) circle (1.5pt);
\fill [color=black] (0.5,-0.5) circle (1.5pt);
\fill [color=black] (-1,0) circle (1.5pt);
\fill [color=black] (-0.5,0) circle (1.5pt);
\fill [color=black] (0,0) circle (1.5pt);
\fill [color=black] (0.5,0) circle (1.5pt);
\fill [color=black] (-1,1.5) circle (1.5pt);
\fill [color=black] (-0.5,1) circle (1.5pt);
\fill [color=black] (-0.5,-1.5) circle (1.5pt);
\fill [color=black] (-1,-1) circle (1.5pt);
\fill [color=black] (-0.5,-1) circle (1.5pt);
\fill [color=black] (0,-1) circle (1.5pt);
\fill [color=black] (0.5,-1) circle (1.5pt);
\draw[color=black] (-1,1.7) node {$v_1$};
\draw[color=black] (-0.5,-1.8) node {$v_2$};
\draw[color=black] (0.25,1) node {$\cdots$};
\draw[color=black] (0.25,0.5) node {$\cdots$};
\draw[color=black] (0.25,-0.5) node {$\cdots$};
\draw[color=black] (0.25,0) node {$\cdots$};
\draw[color=black] (0.25,-1) node {$\cdots$};
\end{scriptsize}
\end{tikzpicture}}
\subfigure[$J_1$]{\begin{tikzpicture}[line cap=round,line join=round,>=triangle 45,x=1cm,y=1cm]
\clip(-1.2,-2) rectangle (1,1.1);
\draw (-1,1)-- (-0.5,0.5);
\draw (-1,-0.5)-- (-0.5,0);
\draw (-0.5,-0.5)-- (0,0);
\draw (-0.5,-0.5)-- (-1,0);
\draw (0,-0.5)-- (-0.5,0);
\draw (-1,-1)-- (-0.5,-0.5);
\draw (-0.5,-1)-- (0,-0.5);
\draw (-1,-0.5)-- (-0.5,-1);
\draw (-0.5,-0.5)-- (0,-1);
\draw (-0.5,-1.5)-- (-1,-1);
\draw (-0.5,-1.5)-- (0,-1);
\draw (-0.5,-1.5)-- (0.5,-1);
\draw (-0.5,0.5)-- (0,0);
\draw (-1,0)-- (-0.5,0.5);
\draw (-0.5,0.5)-- (0.5,0);
\begin{scriptsize}
\fill [color=black] (-1,1) circle (1.5pt);
\fill [color=black] (-0.5,0.5) circle (1.5pt);
\fill [color=black] (-1,-0.5) circle (1.5pt);
\fill [color=black] (-0.5,-0.5) circle (1.5pt);
\fill [color=black] (0,-0.5) circle (1.5pt);
\fill [color=black] (0.5,-0.5) circle (1.5pt);
\fill [color=black] (-1,0) circle (1.5pt);
\fill [color=black] (-0.5,0) circle (1.5pt);
\fill [color=black] (0,0) circle (1.5pt);
\fill [color=black] (0.5,0) circle (1.5pt);
\fill [color=black] (-0.5,-1.5) circle (1.5pt);
\fill [color=black] (-1,-1) circle (1.5pt);
\fill [color=black] (-0.5,-1) circle (1.5pt);
\fill [color=black] (0,-1) circle (1.5pt);
\fill [color=black] (0.5,-1) circle (1.5pt);
\draw[color=black] (-0.5,-1.8) node {$v_2$};
\draw[color=black] (0.25,-0.5) node {$\cdots$};
\draw[color=black] (0.25,0) node {$\cdots$};
\draw[color=black] (0.25,-1) node {$\cdots$};
\end{scriptsize}
\end{tikzpicture}\label{J1}}
\subfigure[$J_2$]{\begin{tikzpicture}[line cap=round,line join=round,>=triangle 45,x=1cm,y=1cm]
\clip(-1.2,-2) rectangle (1,1.1);
\draw (-1,1)-- (0,0.5);
\draw (-1,-0.5)-- (-0.5,0);
\draw (-0.5,-0.5)-- (0,0);
\draw (-0.5,-0.5)-- (-1,0);
\draw (0,-0.5)-- (-0.5,0);
\draw (-1,-1)-- (-0.5,-0.5);
\draw (-0.5,-1)-- (0,-0.5);
\draw (-1,-0.5)-- (-0.5,-1);
\draw (-0.5,-0.5)-- (0,-1);
\draw (-0.5,-1.5)-- (-1,-1);
\draw (-0.5,-1.5)-- (0,-1);
\draw (-0.5,-1.5)-- (0.5,-1);
\draw (-0.5,0)-- (0,0.5);
\draw (0,0.5)-- (-1,0);
\draw (0,0.5)-- (0.5,0);
\begin{scriptsize}
\fill [color=black] (-1,1) circle (1.5pt);
\fill [color=black] (0,0.5) circle (1.5pt);
\fill [color=black] (-1,-0.5) circle (1.5pt);
\fill [color=black] (-0.5,-0.5) circle (1.5pt);
\fill [color=black] (0,-0.5) circle (1.5pt);
\fill [color=black] (0.5,-0.5) circle (1.5pt);
\fill [color=black] (-1,0) circle (1.5pt);
\fill [color=black] (-0.5,0) circle (1.5pt);
\fill [color=black] (0,0) circle (1.5pt);
\fill [color=black] (0.5,0) circle (1.5pt);
\fill [color=black] (-0.5,-1.5) circle (1.5pt);
\fill [color=black] (-1,-1) circle (1.5pt);
\fill [color=black] (-0.5,-1) circle (1.5pt);
\fill [color=black] (0,-1) circle (1.5pt);
\fill [color=black] (0.5,-1) circle (1.5pt);
\draw[color=black] (-0.5,-1.8) node {$v_2$};
\draw[color=black] (0.25,-0.5) node {$\cdots$};
\draw[color=black] (0.25,0) node {$\cdots$};
\draw[color=black] (0.25,-1) node {$\cdots$};
\end{scriptsize}
\end{tikzpicture}\label{J2}}
\caption{}
\end{figure}
For $k=3r$, we take $G=H_{3r,n}-v_1$ (Figure \ref{h3rn-v1}). 
In this graph, $N_G((u_1,i))\subseteq N_G((u_3,i))$ for 
all $1\leq i\leq n$, so $I(G)\simeq I(G_1)$ where 
$$G_1=G-\bigcup_{1\leq i\leq n}N_G((u_3,i)).$$
Now, in $G_1$, $N_G((u_4,i))\subseteq N_G((u_6,i))$ for all $1\leq i\leq n$, so 
$I(G_1)\simeq I(G_2)$ where
$$G_2=G_1-\bigcup_{1\leq i\leq n}N_G((u_6,i)).$$
We keep doing this until we get a graph $G_{r}\cong K_1+rK_2\times K_n$ where the isolated vertex is
$v_2$. Therefore $I(G)\simeq*$ and 
$I(H_{3r})\simeq\Sigma I(H_{3r,n}-N_{H_{3r,n}}[v_1])\cong\Sigma I(H_{3r-1,n})$.

For $k=3r+1$, we take $G=H_{3r+1,n}-v_1$ and do the same proces as before, this time 
in $G_r$ the vertex $(u_{3r+1},2)$ is isolated, so $I(G)\simeq*$. Therefore 
$$I(H_{3r+1})\simeq\Sigma I(H_{3r+1,n}-N_{H_{3r,n}}[v_1])\cong\Sigma I(H_{3r,n})\cong
\Sigma^2I(H_{3r-1,n})$$

For $k=3r+2$, by Theorem \ref{barmak}
$$I(H_{k,n})\simeq\Sigma\left(st(v_1)\cap SC(v_1)\right),$$
$$st(v_1)\cap st((u_1,2))\cong I(J_1),$$
with $J_1$ obtained from $W_{k-2,n}$ attaching a leaf to $v_1$ (Figure \ref{J1}), and
$$st(v_1)\cap st((u_1,i))\cong I(J_2),$$
with $J_2=W_{k,n}-N((u_2,3))$ (Figure \ref{J2}).

In $J_1-((u_2,2))$, the vertex $(u_1,1)$ is an isolated vertex, therefore
$$I(J_1)\simeq\Sigma I(J_1-N[(u_1,2)])\cong\Sigma I(W_{3r-1,n})\simeq\bigvee_{n-1}\Sigma^3I(H_{3(r-2)+2}).$$
In $J_2-((u_2,3))$, the vertex $(u_1,1)$ is an isolated vertex, therefore
$$I(J_2)\simeq\Sigma I(J_2-N[(u_2,3)])\cong\Sigma I(H_{3(r-1)+2,n})$$
Now the intersection of any of these complexes is contrctible, because the vertex $(u_1,1)$ is an isolated 
vertex in the corresponding subgraph. Thus, by Lemma \ref{lempegcontra},
$$\Sigma\left(st(v_1)\cap SC(v_1)\right)\simeq\left(\bigvee_{n-1}\Sigma^4I(H_{k-6,n})\right)\vee\left(\bigvee_{n-2}\Sigma^2I(H_{k-3,n})\right).$$
\end{proof}

\begin{lem}\label{homohkn}
For $k\geq2$ and $n\geq3$, $I(H_{k,n})$ has the homotopy type of a wedge of spheres of the following dimension:
\begin{itemize}
\item[(a)] $2r$ if $k=3r$.
\item[(b)] $2r+1$ if $k=3r+1$ or $k=3r+2$.
\end{itemize}
Moreover, for small $k$ we can say how many spheres:
$$I(H_{k,n})\simeq\left\lbrace
\begin{array}{cc}
\displaystyle\bigvee_{n-2}\mathbb{S}^1 & \mbox{if } k=2\\
\displaystyle\bigvee_{n-2}\mathbb{S}^2 & \mbox{if } k=3\\
\displaystyle\bigvee_{n-2}\mathbb{S}^3 & \mbox{if } k=4\\
\displaystyle\bigvee_{(n-1)+(n-2)^2}\mathbb{S}^3 & \mbox{if }k=5
\end{array}
\right.$$
\end{lem}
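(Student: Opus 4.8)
The plan is to establish the dimension claims (a)--(b) together with the explicit counts for $k\le 5$ by a single strong induction on $k$, feeding the recursive descriptions of Lemma \ref{lemhkn} into the inductive hypothesis. The first thing to notice is that the recursion of Lemma \ref{lemhkn} only becomes available once $r\ge 2$: the $3r+2$ branch refers to $I(H_{k-6,n})$, so it needs $k\ge 8$, while the $3r$ and $3r+1$ branches reduce $k$ to $k-1$ and $k-2$. Consequently every $k\ge 6$ falls under exactly one branch with $r\ge 2$ and is reduced to strictly smaller indices that are still $\ge 2$, whereas the values $k=2,3,4,5$ lie outside the range of the recursion and must serve as the base of the induction.

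For the base cases I would argue directly, reusing the reduction techniques of the two preceding lemmas: repeatedly apply the folding Lemma \ref{vecindad} to discard a vertex whose neighbourhood contains another's, use Proposition \ref{cofseq}(a) to replace $I(H_{k,n})$ by $I(H_{k,n}-v)$ whenever the relevant deleted-neighbourhood complex is contractible, and invoke Theorem \ref{barmak} at the triangle-free vertex $v_1$ to write $I(H_{k,n})\simeq\Sigma\bigl(st(v_1)\cap SC(v_1)\bigr)$. The computation of $k=2$ is of this elementary kind and yields $\bigvee_{n-2}\mathbb{S}^1$; moreover the $3r$ and $3r+1$ reductions of Lemma \ref{lemhkn} already make sense for $r=1$, so the cases $k=3,4$ follow from $k=2$ by one and two suspensions, giving $\bigvee_{n-2}\mathbb{S}^2$ and $\bigvee_{n-2}\mathbb{S}^3$. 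This leaves $k=5$, which is the genuinely delicate case and where I expect the main difficulty. Here I would imitate the $3r+2$ analysis inside the proof of Lemma \ref{lemhkn}: decompose $st(v_1)\cap SC(v_1)$ as the union of the subcomplexes $st(v_1)\cap st(w)$ over the neighbours $w$ of $v_1$, identify these with independence complexes of smaller auxiliary graphs, check that all their pairwise and higher intersections are cones (an isolated vertex appears), and assemble the answer with Lemma \ref{lempegcontra}. The two kinds of neighbours of $v_1$ produce the two summands $n-1$ and $(n-2)^2$; since the descent $k-6=-1$ is out of range, the first summand must be evaluated by hand rather than recursively, and this hand computation is the crux of the base step.

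The inductive step for $k\ge 6$ is then bookkeeping. Write $d(3r)=2r$ and $d(3r+1)=d(3r+2)=2r+1$ for the claimed dimensions. By Lemma \ref{lemhkn}, $I(H_{k,n})$ is, up to suspensions, a finite wedge of copies of $I(H_{j,n})$ with $j<k$ and $j\ge 2$, so by the inductive hypothesis each factor is a wedge of $d(j)$-spheres. Since $\Sigma$ sends a wedge of $m$-spheres to a wedge of $(m+1)$-spheres and commutes with wedges, and a wedge of wedges of equidimensional spheres is again a wedge of spheres, $I(H_{k,n})$ is a wedge of spheres; the only thing to verify is that every contributing sphere has the same dimension. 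In the $3r$ branch this dimension is $d(3r-1)+1=(2r-1)+1=2r$, in the $3r+1$ branch it is $d(3r-1)+2=(2r-1)+2=2r+1$, and in the $3r+2$ branch the two summands contribute $d(k-6)+4=(2r-3)+4=2r+1$ and $d(k-3)+2=(2r-1)+2=2r+1$, which agree. This matches $d(k)$ in every case, proving (a)--(b) and completing the induction.
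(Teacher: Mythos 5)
Your proposal is correct and takes essentially the same route as the paper: direct computation of the base cases $k=2,3,4,5$ (with $k=5$ handled, exactly as in the paper, via Theorem \ref{barmak}, the decomposition of $st(v_1)\cap SC(v_1)$ into the complexes $st(v_1)\cap st(w)$ whose pairwise and higher intersections are cones, and Lemma \ref{lempegcontra}), followed by induction on $k$ through the recursion of Lemma \ref{lemhkn} with the same branchwise dimension bookkeeping. The paper likewise uses $H_{2,n}$ and $H_{5,n}$ as the induction base, so there is nothing to add.
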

\begin{proof}
For $k=2$, the neighborhood of $(u_1,2)$ contains the neighborhood of $v_1$, so we can erase $(u_1,2)$. In 
this new graph the neighborhood of $(u_2,1)$ contains the neighborhood of $v_2$, so we can 
erase $(u_2,1)$. Now the neighborhood of $(u_1,1)$ contains the neigborhood of $v_2$, and the one 
of $(u_2,1)$ the one of $v_1$, so we can erase $(u_1,1)$ and $(u_2,2)$. This new graph is isomorphic to 
$K_2\times K_{n-1}$, so 
$$I(H_{2,n})\simeq\bigvee_{n-2}\mathbb{S}^1.$$
For $k=3$, $H_{3,k}-N_{H_{3,k}}[v_1]\cong H_{2,n}$ and $I(H_{3,n}-v_1)\simeq*$, therefore
$$I(H_{3,n})\simeq\bigvee_{n-2}\mathbb{S}^2.$$
For $k=4$, $H_{4,n}-N_{H_{4,n}[v_1]}\cong H_{3,n}$ and $I(H_{4,n}-v_1)\simeq*$, therefore
$$I(H_{4,n})\simeq\bigvee_{n-2}\mathbb{S}^3.$$
For $k=5$, we know that 
$$I(H_{5,n})\simeq\Sigma\left(st(v_1)\cap SC(N(v_1))\right)$$
and that 
$$st(v_1)\cap SC(N(v_1))=\bigcup_{i=1}^{n-1}K_i,$$
where, taking $N(v_1)=\{u_1,\dots,u_{n-1}\}$,
$$K_i=st(v_1)\cap st(u_i)=I\left((G-N(v_1))\cap(G-N(u_i))\right).$$
For $i=1$, $(G-N(v_1))\cap(G-N(u_1))$ is isomorphic to $W_{3,n}$ with a leaf adjacent to 
$v_1$, therefore, erasing all the neighbors of $v_1$ but the leaf, we get that
$$K_1\simeq\Sigma I(W_{2,n})\simeq \Sigma I(K_2\times K_n),$$
so 
$$K_1\simeq\bigvee_{n-1}\mathbb{S}^2.$$
For $i\geq2$, $(G-N(v_1))\cap(G-N(u_i))$ is isomorphic to $H_{3,n}$ with a leaf adjacent to 
$v_1$, therefore, erasing all the neighbors of $v_1$ but the leaf, we get that
$$K_i\simeq\Sigma I(H_{2,n})\simeq\bigvee_{n-2}\mathbb{S}^2.$$
In any intersection of these complexes the leaf becomes an isolated vertex, therefore the intersections 
are contractible, so 
$$st(v_1)\cap SC(N(v_1))\simeq\bigvee_{i=1}^{n-1}K_i.$$
Therefore
$$I(H_{5,n})\simeq\bigvee_{(n-1)+(n-2)^2}\mathbb{S}^3$$
Using $H_{2,n}$ and $H_{5,n}$ as the base for the induction and the last lemma we get that 
$I(H_{k,n})$ has the homotopy type of the wedge of spheres of the desired dimension.
\end{proof}

From Lemma \ref{lemhkn} we see that the homotopy type of $I(H_{k,n})$ only depends of the homotopy 
type of the complexes $I(H_{3r+2,n})$, which is, by Lemma \ref{homohkn}, the wedge of some number of $(2r+1)$-spheres. 
If we let $h(r,n)$ denote to the number of spheres in  $I(H_{3r+2,n})$ we have the following recursion relation:
\begin{itemize}
\item[(a)] $h(0,n)=n-2$
\item[(b)] $h(1,n)=n-1+(n-2)^2=1+h(0,n)+(h(0,n))^2$
\item[(c)] $h(r,n)=(n-1)h(r-2,n)+(n-2)h(r-1,n)$ for $r\geq 2$
\end{itemize}

This recursion can be solved by standard techniques, and better still, once the solution is found, it is easy to verify by induction. The solution works out to be
\begin{equation}\label{defh}
    h(r,n) = \frac{(n-1)^{r+2} - (-1)^r}{n}.
\end{equation}

Now from Lemmas \ref{lemhkn} and \ref{homohkn}, we get:
\begin{lem}\label{lemhomohkn}
$$I(H_{k,n})\simeq\left\lbrace
\begin{array}{cc}
\displaystyle\bigvee_{h(r-1,n)}\mathbb{S}^{2r} & \mbox{if } k=3r\\
\displaystyle\bigvee_{h(r-1,n)}\mathbb{S}^{2r+1} & \mbox{if } k=3r+1\\
\displaystyle\bigvee_{h(r,n)}\mathbb{S}^{2r+1} & \mbox{if } k=3r+2
\end{array}
\right.$$
\end{lem}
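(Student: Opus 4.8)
The plan is to argue by induction on $k$, treating the three residue classes of $k$ modulo $3$ in parallel. The essential observation is that Lemma \ref{homohkn} has already carried out all of the genuinely topological work: it tells us that $I(H_{k,n})$ is homotopy equivalent to a wedge of spheres of a \emph{single} dimension, namely $2r$ when $k=3r$ and $2r+1$ when $k=3r+1$ or $k=3r+2$. Thus the dimension assertions of the present lemma hold automatically, and all that remains is to count the spheres. Writing $N(k,n)$ for the number of spheres in $I(H_{k,n})$, I would show that $N$ satisfies exactly the same initial values and recursion as the function $h$ of (a)--(c), and then read off the stated index shifts.

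First I would establish the base cases $2\le k\le 5$ by simply quoting the four explicit wedges in Lemma \ref{homohkn}, which give $N(2,n)=N(3,n)=N(4,n)=n-2=h(0,n)$ and $N(5,n)=(n-1)+(n-2)^2=h(1,n)$. Comparing with the statement (with $r=0$ for $k=2$ and $r=1$ for $k=3,4,5$) verifies the lemma in this range, and in particular records the two initial values $h(0,n)$ and $h(1,n)$ of the $3r+2$ subsequence.

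For the inductive step $k\ge 6$ I would feed the equivalences of Lemma \ref{lemhkn} into the inductive hypothesis, using that the suspension of a wedge of $m$ spheres is again a wedge of $m$ spheres one dimension higher. When $k=3r$ we have $I(H_{3r,n})\simeq\Sigma I(H_{3r-1,n})$; since $3r-1=3(r-1)+2$, the inductive hypothesis gives $I(H_{3r-1,n})\simeq\bigvee_{h(r-1,n)}\mathbb{S}^{2r-1}$, and one suspension yields a wedge of $h(r-1,n)$ spheres of dimension $2r$. The case $k=3r+1$ is identical except that the double suspension $\Sigma^2$ raises the dimension to $2r+1$ while leaving the count $h(r-1,n)$ fixed. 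The only place the count genuinely grows is $k=3r+2$, where Lemma \ref{lemhkn} gives
$$I(H_{3r+2,n})\simeq\Big(\bigvee_{n-1}\Sigma^4 I(H_{3(r-2)+2,n})\Big)\vee\Big(\bigvee_{n-2}\Sigma^2 I(H_{3(r-1)+2,n})\Big),$$
after rewriting $k-6=3(r-2)+2$ and $k-3=3(r-1)+2$. The inductive hypothesis turns $\Sigma^4 I(H_{3(r-2)+2,n})$ into a wedge of $h(r-2,n)$ spheres of dimension $2r+1$ and $\Sigma^2 I(H_{3(r-1)+2,n})$ into a wedge of $h(r-1,n)$ spheres of the same dimension, so the right-hand side is a wedge of $(n-1)h(r-2,n)+(n-2)h(r-1,n)$ spheres of dimension $2r+1$. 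By recursion (c) this count equals $h(r,n)$, which closes the induction.

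The only part demanding care is the bookkeeping rather than any topology: one must keep the translation between $k$ and $r$ consistent in each residue class, check that the suspension exponents $\Sigma^4$ and $\Sigma^2$ carry the dimensions $2r-3$ and $2r-1$ up to the common value $2r+1$, and confirm that the offsets $k-6$ and $k-3$ remain congruent to $2$ modulo $3$ so that the inductive hypothesis for the $3r+2$ case applies. Once these index checks are made the equality $N=h$ is forced, since $N(3r+2,n)$ and $h(r,n)$ share the two initial values $h(0,n)$ and $h(1,n)$ together with the same linear recursion, while the two remaining residue classes arise from the $3r+2$ case by suspensions that preserve the sphere count.
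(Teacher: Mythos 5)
Your proposal is correct and takes essentially the same route as the paper, which states this lemma without a separate proof precisely because it follows "from Lemmas \ref{lemhkn} and \ref{homohkn}" together with the recursion $h(r,n)=(n-1)h(r-2,n)+(n-2)h(r-1,n)$; your induction on $k$ with base cases $2\le k\le 5$ is exactly the bookkeeping the paper leaves implicit. All your index checks come out right: $k-6=3(r-2)+2$ and $k-3=3(r-1)+2$ stay in the $3r+2$ class, the suspensions $\Sigma^4$ and $\Sigma^2$ raise dimensions $2r-3$ and $2r-1$ to the common $2r+1$, and the counts match $h$ in all three residue classes.
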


Now we can determine the homotopy type of $G_{k,n}$.
\begin{lem}\label{lemgkn}
For $k\geq2$ and $n\geq3$
$$I(G_{k,n})\simeq\left\lbrace
\begin{array}{cc}
\displaystyle\bigvee_{n}\mathbb{S}^2 & \mbox{if } k=2\\
\displaystyle\bigvee_{h(r-1,n)}\mathbb{S}^{2r} & \mbox{if } k=3r\\
\displaystyle\bigvee_{h(r-1,n)}\mathbb{S}^{2r+1} & \mbox{if } k=3r+1\\
\displaystyle\bigvee_{h(r-1,n)+(n-1)^{r+1}}\mathbb{S}^{2r+2} & \mbox{if } k=3r+2 \mbox{ and } r\geq1
\end{array}
\right.$$
\end{lem}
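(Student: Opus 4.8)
The plan is to peel off the two bridge vertices $w_1,w_2$ and reduce everything to the graphs $H_{k-1,n}$, $H_{k,n}$ and $P_k\times K_n$, whose homotopy types are already known from Lemma \ref{lemhomohkn} and Theorem \ref{homopkkn}. The organising principle is Theorem \ref{barmak} applied to the vertex $w_1$: it is non-isolated and lies in no triangle (its only neighbours $v_1$ and $w_2$ are non-adjacent), so $I(G_{k,n})\simeq\Sigma\bigl(st(w_1)\cap SC(w_1)\bigr)$, and since $N(w_1)=\{v_1,w_2\}$ we get $st(w_1)\cap SC(w_1)=A\cup B$ with $A=st(w_1)\cap st(v_1)$ and $B=st(w_1)\cap st(w_2)$.

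The first substantive step is to identify these pieces as independence complexes. A simplex lies in $st(w_1)\cap st(v_1)$ exactly when it avoids $N[w_1]\cup N[v_1]$, and deleting those vertices from $G_{k,n}$ leaves a copy of $H_{k-1,n}$ (the surviving vertex $(u_1,1)$ plays the role of $v_1$); likewise $B$ is the independence complex left after deleting $N[w_1]\cup N[w_2]=\{w_1,v_1,w_2,v_2\}$, namely $I(P_k\times K_n)$. The triple intersection $A\cap B$ deletes in addition $\{(u_1,i):i\ge 2\}$ and so equals $I(H_{k-1,n}-v_2)$. I would therefore first prove the three identifications $A\cong I(H_{k-1,n})$, $B\cong I(P_k\times K_n)$, $A\cap B\cong I(H_{k-1,n}-v_2)$, together with the reflection isomorphism $H_{m,n}-v_1\cong H_{m,n}-v_2$ (induced by $u_j\mapsto u_{m+1-j}$ composed with the transposition of the first two $K_n$-coordinates) and the fact, already extracted inside the proof of Lemma \ref{lemhkn}, that $I(H_{m,n}-v_1)$ is contractible precisely when $m\not\equiv 2\pmod 3$.

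With these in hand the computation splits by the residue of $k$. When $k\equiv 2\pmod 3$ we have $k-1\equiv 1$, so $A\cap B\simeq *$; hence $A\cup B\simeq A\vee B$ (the two-piece instance of Lemma \ref{lempegcontra}), and suspending gives $I(G_{k,n})\simeq\Sigma A\vee\Sigma B$, which by Lemma \ref{lemhomohkn} and Theorem \ref{homopkkn} is the asserted wedge of $h(r-1,n)+(n-1)^{r+1}$ spheres of dimension $2r+2$; the base case $k=2$ runs the same argument, computing the degenerate piece $A\simeq\mathbb{S}^1$ by hand. When $k\equiv 0,1\pmod 3$ the Barmak route stalls, so there I would abandon Theorem \ref{barmak} and run two cofibre sequences from Proposition \ref{cofseq}(a): since $I(G_{k,n}-N[w_1])=I(H_{k,n}-v_1)\simeq *$ the inclusion $I(G_{k,n}-w_1)\hookrightarrow I(G_{k,n})$ is an equivalence, and since $I\bigl((G_{k,n}-w_1)-N[w_2]\bigr)=I(H_{k,n}-v_2)\simeq *$ a second application gives $I(G_{k,n}-w_1)\simeq I(H_{k,n})$; thus $I(G_{k,n})\simeq I(H_{k,n})$, and Lemma \ref{lemhomohkn} finishes the cases $k=3r$ and $k=3r+1$.

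The hard part will be exactly the $k\equiv0\pmod3$ regime. There the three complexes $A$, $B$, $A\cap B$ are all nontrivial wedges of spheres in the \emph{single} dimension $2r-1$, so the clean ``intersection is contractible'' mechanism fails; a naive wedge decomposition would overcount, spuriously adding the $(n-1)^r$ spheres coming from $B$, whereas the true answer is just the $h(r-1,n)$ spheres of $\Sigma I(H_{3r,n})$. Recognising that here the bridge must contribute \emph{nothing}, and that this is forced by the \emph{simultaneous} contractibility of both $I(H_{k,n}-v_1)$ and $I(H_{k,n}-v_2)$, is the key point; establishing those two contractibilities (the second only via the reflection isomorphism above, since the fold reductions of Lemma \ref{vecindad} used for $v_1$ run from the opposite end of the path for $v_2$) is the technical heart of the argument.
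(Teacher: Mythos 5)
Your proposal is correct, but it takes a partly different route from the paper's, which never applies Theorem \ref{barmak} in this lemma. For $k\equiv 0,1\pmod 3$ the paper performs the same first step as you (Proposition \ref{cofseq}(a) at $w_1$, using $I(G_{k,n}-N[w_1])=I(H_{k,n}-v_1)\simeq *$), but then, instead of a second cofibre sequence at $w_2$, it folds: in $G_{k,n}-w_1$ the only neighbour of $w_2$ is $v_2$, so Lemma \ref{vecindad} erases the other neighbours of $v_2$, leaving $K_2\sqcup H_{k-1,n}$ and hence $I(G_{k,n})\simeq\Sigma I(H_{k-1,n})$; your second application of Proposition \ref{cofseq}(a), legitimised by the reflection isomorphism $H_{k,n}-v_2\cong H_{k,n}-v_1$, lands on $I(H_{k,n})$ instead, and the two outputs agree by Lemma \ref{lemhkn}, so your reflection trick is a clean substitute for the paper's fold computation (note only that for $k=3,4$ the contractibility of $I(H_{k,n}-v_1)$ is recorded in the proof of Lemma \ref{homohkn} rather than Lemma \ref{lemhkn}, though the same fold argument covers those cases). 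For $k=3r+2$, including $k=2$, the paper splits at $v_1$ via Proposition \ref{cofseq}(b): $I(G_{k,n}-N[v_1])\simeq\Sigma I(H_{k-2,n})$ is a wedge of $(2r+1)$-spheres while $I(G_{k,n}-v_1)\simeq\Sigma I(P_k\times K_n)$ is a wedge of $(2r+2)$-spheres, so the null-homotopy needed for the splitting comes from the dimension gap; your Barmak decomposition $st(w_1)\cap SC(w_1)=A\cup B$ with $A\cong I(H_{k-1,n})$, $B\cong I(P_k\times K_n)$ and $A\cap B\cong I(H_{k-1,n}-v_2)\simeq *$ yields the same two wedge summands (since $\Sigma I(H_{3r+1,n})\simeq\Sigma^2 I(H_{3r,n})$ by Lemma \ref{lemhkn}), with the contractible intersection doing the work the dimension gap does in the paper. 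The trade-off: the paper's $v_1$-splitting is shorter and needs no identification of the intersection, while your route is uniform across $k=2$ and $k\equiv 2\pmod 3$ and makes the role of the bridge explicit. Two small remarks: your diagnosis of $k\equiv 0\pmod 3$ as the hard regime applies only to the Barmak route you then rightly abandon --- in the paper that is the easy fold case --- and in that motivating paragraph there is a slip: the true answer there consists of the $h(r-1,n)$ spheres of $I(H_{3r,n})$ (equivalently of $\Sigma I(H_{3r-1,n})$), not of $\Sigma I(H_{3r,n})$, which sits one dimension too high; your actual argument, which concludes $I(G_{3r,n})\simeq I(H_{3r,n})$, is unaffected.
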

\begin{proof}
For $k=2$, $G_{2,n}-N[v_1]\cong K_2+K_{1,n}$, therefore $I(G_{2,n}-N[v_1])\simeq\mathbb{S}^1$. In 
$G_{2,n}-v_1$ the only neighbor of $w_1$ is $w_2$, so 
$$I(G_{2,n}-v_1)\simeq I(G_{2,n}-v_1-v_2)\cong I(K_2+K_2\times K_n)\simeq\bigvee_{n-1}\mathbb{S}^2,$$
and therefore,
$$I(G_{2,n})\simeq\bigvee_{n}\mathbb{S}^2.$$
For $k=3r,3r+1$, $G_{k,n}-N[w_1]$ is isomorphic to $H_{k,n}-v_1$ and as we saw in the proof of 
the Lemma \ref{lemhkn}, $I(H_{k,n}-v_1)\simeq*$. Therefore 
$$I(G_{k,n})\simeq I(G_{k,n}-w_1).$$
In $G_{k,n}-w_1$, the only neighbor of $w_2$ is $v_2$, so we can erase all the neighbors of $v_2$ except 
$w_2$ and we get
$$I(G_{k,n}-w_1)\simeq I(K_2+H_{k-1,n})\cong\Sigma I(H_{k-1,n}).$$
Using Lemma \ref{lemhomohkn}, we get the result.

For $k=3r+2$ with $r\geq1$, in the graph $G_{k,n}-N[v_1]$ the only neighbor of $w_2$ is $v_2$, 
so we can erase all the neighbors of $v_2$ but for $w_2$ and we get that
$$I(G_{k,n}-N[v_1])\simeq I(K_2+H_{k-2,n})\cong\Sigma I(H_{k-2,n}),$$
which, by Lemma \ref{homohkn}, has the homotopy type of a wedge of $(2r+1)$-spheres.
Now, in $G_{k,n}-v_1$ the only neighbor of $w_1$ is $w_2$, so we can remove $v_2$ and obtain
$$I(G_{k,n}-v_1)\simeq I(K_2+P_k\times K_n)\cong\Sigma I(P_k\times K_n),$$
which, by Theorem \ref{homopkkn}, has the homotopy type of a wedge of $(n-1)^{r+1}$ 
$(2r+2)$-spheres, and thus the inclusion $I(G_{k,n}-N[v_1])\longhookrightarrow I(G_{k,n}-v_1)$ is 
null-homotopic. Therefore,
$$I(G_{k,n})\simeq\Sigma^2I(H_{k-2,n})\vee\Sigma I(P_k\times K_n).$$
By Theorem \ref{homopkkn} and Lemma \ref{lemhomohkn} we get the result.
\end{proof}

For $n\geq3$ and $k\geq2$, we define:
\begin{itemize}
\item $\mathring{W}_{k,n}$ as the graph obtained from $W_{k,n}$ by taking the path of length $3$ 
with vertices $w_1,w,w_2$ and edges $\{\{w_1,w\}\{w,w_2\}\}$ and making $v_1$ adjacent to $w_1$ and 
$v_2$ to $w_2$.
\item $\mathring{H}_{k,n}$ as the graph obtained from $H_{k,n}$ by taking two new vertices $w_1$ and $w_2$, 
and making $v_1$ adjacent to $w_1$ and $v_2$ to $w_2$.
\end{itemize}

\begin{lem}\label{homowcirc}
$$I(\mathring{W}_{k,n})\simeq\left\lbrace
\begin{array}{cc}
\displaystyle\bigvee_{(n-1)^r}\mathbb{S}^{2r+1} & \mbox{if } k=3r\\
* & \mbox{if } k=3r+1\\
\displaystyle\bigvee_{(n-1)^{r+1}}\mathbb{S}^{2r+2} & \mbox{if } k=3r+2
\end{array}
\right.$$
\end{lem}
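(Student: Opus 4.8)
The plan is to feed the attached path into the star–cluster theorem (Theorem \ref{barmak}) at its middle vertex $w$. In $\mathring W_{k,n}$ the vertex $w$ is non-isolated and lies in no triangle, since its only neighbours $w_1,w_2$ are non-adjacent; hence $I(\mathring W_{k,n})\simeq\Sigma\bigl(st(w)\cap SC(w)\bigr)$. Because $N(w)=\{w_1,w_2\}$ we have $SC(w)=st(w_1)\cup st(w_2)$, so
$$st(w)\cap SC(w)=A_1\cup A_2,\qquad A_i:=st(w)\cap st(w_i).$$
Using that $st(x)=I(G-N(x))$, a direct reading of the forbidden neighbourhoods $N(w)\cup N(w_i)$ gives $A_1=I(W_{k,n}-v_1)$, $A_2=I(W_{k,n}-v_2)$, and $A_1\cap A_2=I(P_k\times K_n)$. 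By the left–right symmetry of $W_{k,n}$ one has $W_{k,n}-v_1\cong W_{k,n}-v_2$, so the two pieces $A_1,A_2$ are homotopy equivalent.

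For the homotopy types, set $R:=W_{k,n}-v_1$, that is $P_k\times K_n$ with a single apex $v_2$ joined to the vertices $(u_k,i)$, $i\ne 2$. Deleting the end layers $(u_3,\cdot),(u_6,\cdot),\dots$ one at a time by Lemma \ref{vecindad} (after each deletion the vertex $(u_{3j+1},\cdot)$ becomes a free end dominated by $(u_{3j+3},\cdot)$), one finds $I(R)\simeq\ast$ when $k\equiv 0$ or $1\pmod 3$, the surviving isolated vertex being $v_2$ or $(u_k,2)$ respectively; this is exactly the reduction already carried out in the proof of the lemma computing $I(W_{k,n})$. For $k\equiv 2\pmod 3$ I instead note that $R-N[v_2]=P_k\times K_n-\{(u_k,i):i\ne 2\}\cong W_{k-1,n}-v_1$ (regard $(u_k,2)$ as a new apex over layer $k-1$); since $k-1\equiv 1$ this is contractible by the previous case, so Proposition \ref{cofseq}(a) makes the inclusion $I(P_k\times K_n)=I(R-v_2)\hookrightarrow I(R)$ a homotopy equivalence. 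The symmetry gives the identical statements for $A_2$.

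It then remains to glue. The union $A_1\cup A_2$ is the pushout of the subcomplex inclusions $A_1\hookleftarrow(A_1\cap A_2)\hookrightarrow A_2$, which are cofibrations, so it computes the homotopy pushout. When $k\equiv 0,1\pmod 3$ both pieces are contractible, the two inclusions are null-homotopic, and Lemma \ref{homocolimpegado} yields $A_1\cup A_2\simeq\Sigma(A_1\cap A_2)=\Sigma I(P_k\times K_n)$, hence $I(\mathring W_{k,n})\simeq\Sigma^2 I(P_k\times K_n)$. When $k\equiv 2\pmod 3$ both inclusions $A_1\cap A_2\hookrightarrow A_i$ are homotopy equivalences, so comparing this span with the constant span on $A_1\cap A_2$ through Theorem \ref{teohomocompush} gives $A_1\cup A_2\simeq A_1\cap A_2=I(P_k\times K_n)$, hence $I(\mathring W_{k,n})\simeq\Sigma I(P_k\times K_n)$. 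Substituting the values of $I(P_k\times K_n)$ from Theorem \ref{homopkkn} produces $\bigvee_{(n-1)^r}\mathbb{S}^{2r+1}$ for $k=3r$, a point for $k=3r+1$, and $\bigvee_{(n-1)^{r+1}}\mathbb{S}^{2r+2}$ for $k=3r+2$, as claimed.

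The delicate point is the case $k\equiv 2\pmod 3$, where all three of $A_1,A_2,A_1\cap A_2$ have the same homotopy type: one must verify that the intersection includes into each piece \emph{by an equivalence} — equivalently that $R-N[v_2]$ is contractible — so that the gluing does not introduce an extra suspension. This is precisely where recognizing $R-N[v_2]\cong W_{k-1,n}-v_1$ and applying Proposition \ref{cofseq}(a) is essential; mishandling it would give spheres of dimension $2r+3$ instead of $2r+2$. The other ingredients — the identifications of the $A_i$ and the domination reductions establishing contractibility — are routine.
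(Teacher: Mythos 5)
Your proof is correct, but it takes a genuinely different route from the paper's. The paper never invokes Theorem \ref{barmak} here: it applies Proposition \ref{cofseq} to the vertex $w_1$, computing $I(\mathring{W}_{k,n}-N[w_1])$ and $I(\mathring{W}_{k,n}-w_1)$ directly by repeated domination deletions (Lemma \ref{vecindad}), ending at explicit graphs like $K_2+rK_2\times K_n$ or $K_2\sqcup(r+1)K_2\times K_n$ whose complexes are known joins. You instead suspend via the star cluster at the middle vertex $w$, which packages the whole computation into the triple $\bigl(I(W_{k,n}-v_1),\,I(W_{k,n}-v_2),\,I(P_k\times K_n)\bigr)$ and the gluing Lemma \ref{homocolimpegado} / Theorem \ref{teohomocompush}; all your identifications check out ($N(w)\cup N(w_1)=\{w_1,w_2,v_1,w\}$ does give $A_1=I(W_{k,n}-v_1)$, etc.), the contractibility of $W_{k,n}-v_1$ for $k\equiv0,1\pmod 3$ is exactly what the paper established inside its lemma on $I(W_{k,n})$, and you correctly isolate and resolve the one delicate point: in the case $k\equiv2\pmod 3$ the identification $R-N[v_2]\cong W_{k-1,n}-v_1$ with $k-1\equiv1$ makes $I(P_k\times K_n)\hookrightarrow A_i$ an equivalence, so the union contributes no extra suspension, yielding $\Sigma I(P_k\times K_n)$ rather than $\Sigma^2$. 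What each approach buys: yours is more uniform and conceptual — every case becomes $\Sigma^2I(P_k\times K_n)$ or $\Sigma I(P_k\times K_n)$, with Theorem \ref{homopkkn} doing the counting — while the paper's is more elementary (only folds and the cofibre sequence) and produces the wedge decompositions without any homotopy-pushout bookkeeping. One pedantic edge case: at $k=2$ your reduction refers to ``$W_{1,n}-v_1$,'' which lies outside the definition range $k\geq2$ of the family $W_{k,n}$; the intended graph is $P_1\times K_n$ with an apex over the columns $i\neq2$, in which $(u_1,2)$ is isolated, so contractibility — and hence your argument — still holds there.
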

\begin{proof}
When $k=3r$, in $T=\mathring{W}_{3r,n}-N[w_1]$, the neighborhood of the vertex $(u_1,i)$ is contain in the 
neighborhood of the vertex $(u_3,i)$ for all $i$. Then, we can erase the row $u_3$ form $T$ and the 
independence complex of this new graph is homotopy equivalent to $I(G)$. In this new graph the neighborhood of 
$(u_4,i)$ is contained in the one of $(u_6,i)$, so we can erase the row $u_6$ and the homotopy type will 
not change. Continuing with this process until we have erased all the rows $u_{3k}$ for $1\leq k\leq r$, 
we obtain a graph which is isomorphic to $K_2+rK_2\times K_n$, so 
$$I(T)\simeq\Sigma I(rK_2\times K_n)\simeq\bigvee_{(n-1)^r}\mathbb{S}^{2r}.$$
Now, in $\mathring{W}_{3r,n}-w_1$ the only neighbor of $w$ is $w_2$, so we can erase $v_2$. In this new 
graph, the neighborhood of $(u_{3r},i)$ is contain in the one of $(u_{3r-2},i)$, so we can erase the row 
$u_{3r-2}$. Continuing this process as before, we erase the rows $u_{3k-2}$ for all $1\leq k \leq r$. At 
the end of this, the vertex $v_1$ is an isolated vertex, so $I(\mathring{W}_{3r,n}-w_1)\simeq*$ and 
therefore
$$I(\mathring{W}_{3r,n})\simeq\Sigma I(T)\simeq\bigvee_{(n-1)^r}\mathbb{S}^{2r+1}.$$
For $k=3r+1$, as before we take $T=\mathring{W}_{3r+1,n}-N[w_1]$ and erase the rows $u_{3k}$ 
for $1\leq k\leq r$, we get a graph in which the vertex $(u_{3r+1},2)$ is an isolated vertex, then 
$I(T)\simeq*$ and $I(\mathring{W}_{3r+1,n})\simeq I(\mathring{W}_{3r,n}-w_1)$. In 
$\mathring{W}_{3r+1,n}-w_1$, the only neighbor of $w$ is $w_2$, so we can erase $v_2$. In this new 
graph, the neighborhood of $(u_{3r+1},i)$ is contained in the one of $(u_{3r-1},i)$, so we can erase the row 
$u_{3r-1}$. Continuing this process as before, we erase the rows $u_{3k-1}$ for all $1\leq k \leq r$. At 
the end of this, the vertex $(u_1,2)$ is an isolated vertex, so $I(\mathring{W}_{3r+1,n}-w_1)\simeq*$ and 
therefore 
$$I(\mathring{W}_{3r+1,n})\simeq*.$$
For $k=3r+2$, as before we take $T=\mathring{W}_{3r+2,n}-N[w_1]$ and erase the rows $u_{3k}$ 
for $1\leq k\leq r$. In this graph the neighborhood of $(u_{3r+1},2)$ is contain in the one of 
$v_2$, so we can erase $v_2$ and $w_2$ becomes an isolated vertex. Therefore 
$$I(\mathring{W}_{3r+2,n})\simeq I(\mathring{W}_{3r+2,n}-w_1).$$
In $\mathring{W}_{3r+2,n}-w_1$, the only neighbor of $w$ is $w_2$, so we can erase $v_2$. In this new 
graph, the neighborhood of $(u_{3r+2},i)$ is contain in the one of $(u_{3r},i)$, so we can erase the row 
$u_{3r}$. Continuing this process, we erase the rows $u_{3k}$ for all $1\leq k \leq r$. In this graph 
the neighborhood of $v_1$ is equal to the one of $(u_2,2)$, so we can erase $v_1$, therefore
$$I(\mathring{W}_{3r+2,n})\simeq I(K_2\sqcup(r+1)K_2\times K_n)\simeq\bigvee_{(n-1)^{r+1}}\mathbb{S}^{2r+2}.$$
\end{proof}

\begin{lem}\label{homoHcirc}
$$I(\mathring{H}_{k,n})\simeq\left\lbrace
\begin{array}{cc}
\mathbb{S}^2 & \mbox{if } k=2\\
\mathbb{S}^3 & \mbox{if } k=3\\
\Sigma^2I(H_{k-2,n}) & \mbox{for all } k\geq4
\end{array}
\right.$$
\end{lem}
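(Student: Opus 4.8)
The plan is to exploit the two pendant vertices $w_1,w_2$ of $\mathring{H}_{k,n}$ to strip off two suspensions and reduce to the already-computed family $H_{k-2,n}$. First I would record a general observation about a leaf: if $w$ is a degree-one vertex with unique neighbour $v$, then $I(G)\simeq\Sigma I(G-N[v])$. Indeed, in $G-v$ the vertex $w$ becomes isolated, so $I(G-v)$ is a cone and hence contractible; feeding this into the cofibre sequence of Proposition \ref{cofseq} applied to $v$, the contractibility of the middle term $I(G-v)$ collapses the sequence $I(G-N[v])\hookrightarrow I(G-v)\hookrightarrow I(G)\to\Sigma I(G-N[v])$ to give $I(G)\simeq\Sigma I(G-N[v])$. (Equivalently one may invoke Theorem \ref{barmak} at $w$, where $SC(w)=st(v)$ and $st(w)\cap st(v)=I(G-N[v])$.)

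Next I would apply this observation twice. Applied to the leaf $w_1$, whose only neighbour is $v_1$, it gives $I(\mathring{H}_{k,n})\simeq\Sigma I(\mathring{H}_{k,n}-N[v_1])$. Since $v_1\neq v_2$ and $v_1$ is not adjacent to $v_2$, deleting $N[v_1]$ leaves $w_2$ a leaf whose unique neighbour is still $v_2$; applying the observation again to $w_2$ yields $I(\mathring{H}_{k,n})\simeq\Sigma^2 I(G_2)$, where $G_2=\mathring{H}_{k,n}-N[v_1]-N[v_2]$.

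The crux is then to identify $G_2$. Removing $N[v_1]=\{v_1,w_1\}\cup\{(u_1,i):i\geq2\}$ and $N[v_2]=\{v_2,w_2\}\cup\{(u_k,i):i\neq2\}$ deletes the two pendant vertices, the two auxiliary vertices, and all of rows $u_1$ and $u_k$ except $(u_1,1)$ and $(u_k,2)$. What survives is the full middle block of rows $u_2,\dots,u_{k-1}$, a copy of $P_{k-2}\times K_n$, together with $(u_1,1)$ joined to $(u_2,j)$ for $j\neq1$ and $(u_k,2)$ joined to $(u_{k-1},j)$ for $j\neq2$; no further edges appear because in the path $u_1$ is adjacent only to $u_2$ and $u_k$ only to $u_{k-1}$, so the two surviving end-vertices attach solely to the ends of the middle block and are not adjacent to each other. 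Comparing with the definition of $H_{\bullet,n}$, the vertex $(u_1,1)$ plays exactly the role of $v_1$ and $(u_k,2)$ that of $v_2$, so for $k\geq4$ one gets $G_2\cong H_{k-2,n}$ and hence $I(\mathring{H}_{k,n})\simeq\Sigma^2 I(H_{k-2,n})$, as claimed.

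Finally I would dispose of the small cases $k=2,3$, where the block $u_2,\dots,u_{k-1}$ is too short to be an $H_{\bullet,n}$. For $k=2$ only $(u_1,1)$ and $(u_2,2)$ remain, and they are adjacent, so $G_2\cong K_2$ and $I(\mathring{H}_{2,n})\simeq\Sigma^2\mathbb{S}^0=\mathbb{S}^2$. For $k=3$ the graph $G_2$ consists of the single row $u_2$ with $(u_1,1)$ and $(u_3,2)$ attached as above; collapsing the contractible simplex spanned by the row to a point leaves the boundary of a triangle, so $I(G_2)\simeq\mathbb{S}^1$ and $I(\mathring{H}_{3,n})\simeq\Sigma^2\mathbb{S}^1=\mathbb{S}^3$. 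I expect the only real obstacle to be the bookkeeping in the previous paragraph: checking that $G_2$ is precisely $H_{k-2,n}$ with no stray edges and that the attachment indices match, which the figures for $H_{k,n}$ make transparent.
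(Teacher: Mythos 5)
Your proof is correct and follows essentially the same route as the paper: both exploit the pendant vertices $w_1,w_2$ to peel off two suspensions and identify the residual graph with $H_{k-2,n}$ --- the paper erases the neighbours of $v_1,v_2$ via Lemma \ref{vecindad}, leaving $2K_2\sqcup H_{k-2,n}$, which is just the join formulation of your $\Sigma^2 I(G_2)$. Your small cases also match: $k=2$ gives the paper's $3K_2\simeq\mathbb{S}^2$, and your direct collapse for $k=3$ agrees with the paper's reduction to $4K_2\simeq\mathbb{S}^3$.
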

\begin{proof}
Because $N(w_i)=\{v_i\}$, we can erase the vertices $(u_1,i)$ and $(u_k,j)$ for $i>1$ and $j\neq2$. 
Now
\begin{enumerate}
\item If $k=2$, the resulting graph is isomorphic to $3K_2$.
\item If $k\geq4$, the resulting graph is isomorphic to $2K_2\sqcup H_{k-2,n}$.
\item If $k=3$, the only neighbor of $(u_2,1)$ in the resulting graph is $(u_3,2)$, so we can erase 
al the vertices $(u_2,i)$ with $i>2$. This new graph is isomorphic to $4K_2$.
\end{enumerate}
\end{proof}

Before we prove the main result of this section, we need the next lemma. 
\begin{lem}\label{descompcompl}
For $v=(u_1,1)\in V(C_r\times K_n)$, the complex $st(v)\cap SC(v)$ is 
the union of complexes $X_1,\dots,X_{n-1},Y_1,\dots,Y_{n-1}$, where 
\begin{enumerate}
\item For any $i$, $X_i\cong Y_i\cong I(G_{r-4,n})$.
\item For any $i$ and $r\geq7$, $X_i\cap Y_i\cong I(\mathring{W}_{r-5,n})$.
\item For any $i\neq j$ and $r\geq7$, $X_i\cap Y_j\cong I(\mathring{H}_{r-5,n})$.
\item For any $i\neq j$, $X_i\cap X_j\simeq*\simeq Y_i\cap Y_j$.
\item For any $L_1,\dots,L_m$, with $m\geq3$ and 
$L_i\in\{X_1,\dots,X_{n-1},Y_1,\dots,Y_{n-1}\}$, we have
$$\bigcap_{j=1}^mL_j\simeq*$$
\end{enumerate}
\end{lem}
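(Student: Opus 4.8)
The plan is to work directly with the vertex $v = (u_1,1)$ in $C_r \times K_n$ and unwind the definition $st(v)\cap SC(v) = \bigcup_{w\in N(v)}\bigl(st(v)\cap st(w)\bigr)$. First I would identify $N(v)$ explicitly: since $v=(u_1,1)$, its neighbors are exactly the vertices $(u_2,j)$ and $(u_r,j)$ with $j\neq 1$, giving two families of $n-1$ neighbors each. I would label $X_i := st(v)\cap st((u_2,i+1))$ and $Y_i := st(v)\cap st((u_r,i+1))$ for $1\le i\le n-1$, so that $st(v)\cap SC(v)=\bigcup_i X_i \cup \bigcup_i Y_i$. The key observation underlying everything is that for any neighbor $w$ of $v$, $st(v)\cap st(w)\cong I\bigl((C_r\times K_n)-N[v]-N[w]\bigr)$, because a simplex lies in $st(v)\cap st(w)$ iff it is independent and compatible with both $v$ and $w$, i.e. iff it avoids $N[v]\cup N[w]$ and is itself independent.

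\textbf{Identifying the graphs.} For part (1), I would delete $N[v]$ and $N[(u_2,i+1)]$ from $C_r\times K_n$ and check that the remaining graph is isomorphic to $G_{r-4,n}$. Deleting the two closed neighborhoods removes the vertices in rows $u_1,u_2,u_3,u_r,u_{r-1}$ selectively, leaving a path-like product on rows $u_4,\dots,u_{r-1}$ together with a few leftover vertices that play the roles of $v_1,v_2,w_1,w_2$ in the definition of $G_{k,n}$; the count $r-4$ comes from the five consecutive rows touched minus the overlap. By the cyclic symmetry of $C_r$ and the symmetry interchanging $u_2\leftrightarrow u_r$, the same isomorphism type is obtained for $Y_i$, establishing $X_i\cong Y_i\cong I(G_{r-4,n})$. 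For parts (2) and (3) I would take triple intersections with $v$'s star: $X_i\cap Y_i = st(v)\cap st((u_2,i+1))\cap st((u_r,i+1))$, which equals $I\bigl((C_r\times K_n)-N[v]-N[(u_2,i+1)]-N[(u_r,i+1)]\bigr)$. Because the two extra deleted vertices share the \emph{same} $K_n$-coordinate $i+1$, the surviving graph is the one with the $\mathring W$ decoration (the two attachment vertices respect the single-column deletion); when the coordinates differ ($i\neq j$) one gets the $\mathring H$ decoration instead. I would verify the row counts give $r-5$ in both cases.

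\textbf{The contractibility claims.} Parts (4) and (5) are where I expect the real work, though it is routine rather than deep. For $X_i\cap X_j$ with $i\neq j$, I would write this as $st(v)\cap st((u_2,i+1))\cap st((u_2,j+1))$; here the two neighbors $(u_2,i+1),(u_2,j+1)$ lie in the \emph{same} row $u_2$ but different columns, so after removing $N[v]$ and both their closed neighborhoods, I expect some vertex (plausibly $(u_1,1)$ or a $u_3$-vertex in a third column) to become isolated in the associated graph, forcing the independence complex to be a cone and hence contractible — this is exactly the mechanism used repeatedly in Lemmas \ref{lemhkn} and \ref{homohkn}. The same argument handles $Y_i\cap Y_j$. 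For part (5), any intersection of three or more of the $L_j$ must either repeat a row with a column conflict (reducing to the case (4) mechanism of an isolated vertex) or involve three distinct columns, and in every such configuration I would exhibit an isolated vertex in the corresponding subgraph, giving contractibility.

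\textbf{Main obstacle.} The principal difficulty is purely bookkeeping: correctly tracking which rows and columns survive each multi-neighborhood deletion on the cyclic graph $C_r\times K_n$, and confirming that the leftover decorated structures match the precise definitions of $G_{k,n}$, $\mathring W_{k,n}$, and $\mathring H_{k,n}$ — in particular getting the row-index shifts ($r-4$ versus $r-5$) and the constraint $r\ge 7$ (needed so that the surviving path has enough rows for the auxiliary graphs to be defined) exactly right. I would organize this by fixing an explicit drawing of $C_r\times K_n$, marking $N[v]$, and then overlaying each additional closed neighborhood, reading off the isomorphism type directly from the picture; the contractibility in (4) and (5) then follows uniformly from spotting an isolated vertex, which is the same cone argument invoked throughout the preceding lemmas.
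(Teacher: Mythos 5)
Your proposal takes essentially the same route as the paper's proof: the same labeling $X_i=st(v)\cap st((u_2,i+1))$, $Y_i=st(v)\cap st((u_r,i+1))$, the same reduction $st(v)\cap st(w)=I\bigl((C_r\times K_n)-(N(v)\cup N(w))\bigr)$, the same identifications of the induced subgraphs with $G_{r-4,n}$, $\mathring{W}_{r-5,n}$ and $\mathring{H}_{r-5,n}$, and the same isolated-vertex cone argument for parts (4) and (5). The only slip is your guess of the isolated vertex: in $X_i\cap X_j$ it is $(u_2,1)$ (and $(u_r,1)$ for $Y_i\cap Y_j$), not $(u_1,1)=v$ or a $u_3$-vertex, since $v$ and all surviving $u_1$- and $u_3$-vertices are deleted in that intersection, and for triple intersections one or both of $(u_2,1),(u_r,1)$ remain isolated, exactly as the paper argues.
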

\begin{proof}
By definition 
$$SC(v)=\bigcup_{u\in N(v)}st(u)$$
and in $C_r\times K_n$, $|N(v)|=2(n-1)$, we call 
$$X_i=st(v)\cap st((u_2,i+1))$$
and 
$$Y_i=st(v)\cap st((u_n,i+1))$$
Now, $X_i$ is the independence complex of the induced subgraph given by de set 
$$S_i=V(C_r\times K_n)-\left(N(v)\cup N((u_2,i+1))\right)$$
where, taking $w=(u_2,i+1)$,
$$N(v)\cup N(w)=\{(u_2,j):\:j>1\}\cup\{(u_n,j):\:j>1\}\cup\{(u_1,j):\:j\neq i+1\}\cup\{(u_3,j):\:j\neq i+1\}$$
therefore $(C_r\times K_n)[S_i]\cong G_{r-4,n}$.  

Now, $X_i\cap X_j\cong I((C_r\times K_n)[S_i]\cap (C_r\times K_n)[S_j])$, with $i\neq j$, 
in $(C_r\times K_n)[S_i]\cap (C_r\times K_n)[S_j]$ the vertex 
$(u_2,1)$ is an isolated vertex, therefore $X_i\cap X_j\simeq*$. For $Y_i's$ is analogous, with 
$(u_r,1)$ being the isolated vertex. Now, for the intersection of more than $2$ complexes, one or 
both of these vertices are isolated.

Now, taking $S_i$ as before and 
$$R_j=V(C_r\times K_n)-\left(N(v)\cup N((u_r,j+1))\right)$$
taking $t=(u_r,i+1)$ 
$$N(v)\cup N(t)=\{(u_2,l):\:l>1\}\cup\{(u_n,l):\:l>1\}\cup\{(u_1,l):\:l\neq j+1\}\cup\{(u_{r-1},l):\:l\neq j+1\}$$
Then $X_i\cap Y_j\cong I((C_r\times K_n)[S_i\cap R_j])$ and 
\begin{itemize}
\item If $i=j$, then $(C_r\times K_n)[S_i\cap R_j]\cong\mathring{W}_{r-5,n}$.
\item If $i\neq j$, then $(C_r\times K_n)[S_i\cap R_j]\cong\mathring{H}_{r-5,n}$.
\end{itemize}
\end{proof}

\begin{figure}
\centering
\subfigure[$K_i\cap L_i$]{\begin{tikzpicture}[line cap=round,line join=round,>=triangle 45,x=1cm,y=1cm]
\clip(-1.5,-1.5) rectangle (1,2);
\draw (-0.5,1.5)-- (-1,1);
\draw (-1,1)-- (-0.5,0.5);
\draw (-0.5,0.5)-- (-1,0);
\draw (-0.5,0.5)-- (0,0);
\draw (-0.5,0.5)-- (0.5,0);
\draw (-0.5,-0.5)-- (-1,0);
\draw (-0.5,-0.5)-- (0,0);
\draw (-0.5,-0.5)-- (0.5,0);
\draw (-0.5,-0.5)-- (-1,-1);
\draw (-1,-1)-- (-1.25,0.5);
\draw (-1.25,0.5) -- (-1.25,1);
\draw (-1.25,1)-- (-1,1.25);
\draw (-1,1.25)-- (-0.5,1.5);
\begin{scriptsize}
\fill [color=black] (-1,1) circle (1.5pt);
\fill [color=black] (-0.5,0.5) circle (1.5pt);
\fill [color=black] (-0.5,-0.5) circle (1.5pt);
\fill [color=black] (-1,0) circle (1.5pt);
\fill [color=black] (-0.5,0) circle (1.5pt);
\fill [color=black] (0,0) circle (1.5pt);
\fill [color=black] (0.5,0) circle (1.5pt);
\fill [color=black] (-0.5,1.5) circle (1.5pt);
\fill [color=black] (-1,-1) circle (1.5pt);
\draw[color=black] (0.25,0) node {$\cdots$};
\end{scriptsize}
\end{tikzpicture}\label{kili}}
\subfigure[$K_i\cap L_j$]{\begin{tikzpicture}[line cap=round,line join=round,>=triangle 45,x=1cm,y=1cm]
\clip(-1.2,-1.5) rectangle (1,2);
\draw (-0.5,0.5)-- (-1,0);
\draw (-0.5,0.5)-- (0,0);
\draw (-0.5,0.5)-- (0.5,0);
\draw (0,-0.5)-- (-1,0);
\draw (0,-0.5)-- (-0.5,0);
\draw (0,-0.5)-- (0.5,0);
\draw (-1,1)-- (-0.5,0.5);
\draw (-1,-1)-- (0,-0.5);
\begin{scriptsize}
\fill [color=black] (-1,1) circle (1.5pt);
\fill [color=black] (-0.5,0.5) circle (1.5pt);
\fill [color=black] (0,-0.5) circle (1.5pt);
\fill [color=black] (-1,0) circle (1.5pt);
\fill [color=black] (-0.5,0) circle (1.5pt);
\fill [color=black] (0,0) circle (1.5pt);
\fill [color=black] (0.5,0) circle (1.5pt);
\fill [color=black] (-1,-1) circle (1.5pt);
\draw[color=black] (0.25,0) node {$\cdots$};
\end{scriptsize}
\end{tikzpicture}\label{kilj}}
\caption{}
\end{figure}

\begin{theorem}\label{teomod3}
$$I(C_k\times K_n)\simeq\left\lbrace
\begin{array}{cc}
\displaystyle\bigvee_{2(n-1)}\mathbb{S}^1 & \mbox{if } k=3\\
\displaystyle\bigvee_{(n-1)}\mathbb{S}^1 & \mbox{if } k=4\\
\displaystyle\bigvee_{n}\mathbb{S}^2 & \mbox{if } k=5\\
\displaystyle\bigvee_{(n-1)(3n-2)}\mathbb{S}^3 & \mbox{if } k=6\\
\displaystyle\bigvee_{n(n-1)h(r-3,n)+2(n-1)^r}\mathbb{S}^{2r-1} & \mbox{if } k=3r \mbox{ and } r\geq3
\end{array}
\right.$$
\end{theorem}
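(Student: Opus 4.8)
The cases $k=3,4,5$ are already recorded above, so the plan is to treat $k=3r$ with $r\ge 2$; throughout I assume $n\ge 3$, the case $n=2$ being covered by Proposition \ref{propCnK2} (where one checks the displayed count agrees as well). Since $C_{3r}$ is triangle-free for $r\ge 2$, so is $C_{3r}\times K_n$, hence every vertex lies in no triangle; taking the non-isolated vertex $v=(u_1,1)$, Theorem \ref{barmak} gives
$$I(C_{3r}\times K_n)\simeq \Sigma\bigl(st(v)\cap SC(v)\bigr).$$
The whole computation then reduces to the right-hand side, and the engine is Lemma \ref{descompcompl}, which writes $st(v)\cap SC(v)=\bigcup_{i=1}^{n-1}X_i\cup\bigcup_{i=1}^{n-1}Y_i$ with each single piece isomorphic to $I(G_{3r-4,n})$ and with the pairwise and higher intersections identified with independence complexes of the auxiliary families $\mathring{W}$ and $\mathring{H}$ (or contractible). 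The strategy is to feed these homotopy types into Proposition \ref{homocolimpegadoesferas} and suspend.

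For the general case $r\ge 3$ I would first read the homotopy types off the earlier lemmas. Writing $3r-4=3(r-2)+2$, Lemma \ref{lemgkn} gives $X_i\cong Y_i\cong I(G_{3r-4,n})\simeq\bigvee_{h(r-3,n)+(n-1)^{r-1}}\mathbb{S}^{2r-2}$, so the $2(n-1)$ single pieces are wedges of $(2r-2)$-spheres. For the intersections, $X_i\cap Y_i\cong I(\mathring{W}_{3r-5,n})$ is contractible since $3r-5=3(r-2)+1$ (Lemma \ref{homowcirc}), the intersections $X_i\cap X_j$ and $Y_i\cap Y_j$ are contractible by Lemma \ref{descompcompl}(4), and all triple and higher intersections are contractible by part (5). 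The only surviving intersections are $X_i\cap Y_j$ with $i\ne j$: using $3r-5=3(r-2)+1\ge 4$ (this is where $r\ge 3$ enters), Lemma \ref{homoHcirc} gives $X_i\cap Y_j\cong I(\mathring{H}_{3r-5,n})\simeq\Sigma^2 I(H_{3r-7,n})$, and since $3r-7=3(r-3)+2$, Lemma \ref{lemhomohkn} turns this into $\bigvee_{h(r-3,n)}\mathbb{S}^{2r-3}$. Thus the single pieces are $(2r-2)$-dimensional and the surviving pairwise intersections are $(2r-3)$-dimensional, so with $d=2r-2$ and $d_2=2r-3=d-1$ (all deeper intersections contractible) the hypotheses of Proposition \ref{homocolimpegadoesferas} are met. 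Applying it, only the first two layers contribute: the $i=1$ layer is the wedge of the single pieces, giving $2(n-1)\bigl(h(r-3,n)+(n-1)^{r-1}\bigr)$ copies of $\mathbb{S}^{2r-2}$, and the $i=2$ layer runs over the $(n-1)(n-2)$ pairs $\{X_i,Y_j\}$ with $i\ne j$, contributing $\Sigma\mathbb{S}^{2r-3}=\mathbb{S}^{2r-2}$ each, i.e. $(n-1)(n-2)h(r-3,n)$ copies. Summing and factoring $(n-1)\bigl[2+(n-2)\bigr]=n(n-1)$ collapses the $h$-terms to $n(n-1)h(r-3,n)+2(n-1)^r$ copies of $\mathbb{S}^{2r-2}$, and one suspension yields the claimed $\bigvee\mathbb{S}^{2r-1}$.

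The case $r=2$ (that is $k=6$) must be handled by hand, because $\mathring{W}_{k-5,n}$ and $\mathring{H}_{k-5,n}$ are undefined at index $1$; this is where Figures \ref{kili} and \ref{kilj} enter. Here $X_i\cong Y_i\cong I(G_{2,n})\simeq\bigvee_{n}\mathbb{S}^2$ by Lemma \ref{lemgkn}; the intersection graph for $X_i\cap Y_i$ carries an isolated vertex and so is contractible; and the intersection graph for $X_i\cap Y_j$ with $i\ne j$ reduces — after deleting the two pendant-dominated rows via Lemma \ref{vecindad} and one application of Proposition \ref{cofseq} with a contractible deletion — to $\mathbb{S}^1$. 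Proposition \ref{homocolimpegadoesferas} with $d=2$ and $d_2=1$ then produces $2n(n-1)+(n-1)(n-2)=(n-1)(3n-2)$ copies of $\mathbb{S}^2$, and suspension gives $\bigvee_{(n-1)(3n-2)}\mathbb{S}^3$, which is the conjectured value for $C_6\times K_n\cong K_2\times K_3\times K_n$.

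I expect the genuine difficulty to have been front-loaded into Lemma \ref{descompcompl} and the intersection lemmas; given those, the theorem is essentially their synthesis. The remaining delicate points are purely the dimension bookkeeping needed to guarantee the hypothesis $d_2\le d-1$ of Proposition \ref{homocolimpegadoesferas} (one must confirm that every surviving intersection sits exactly one dimension below the single pieces and that everything deeper is contractible), together with the degenerate boundary case $r=2$, where the auxiliary families break down and the two intersection types must be analyzed directly rather than through $\mathring{W}$ and $\mathring{H}$.
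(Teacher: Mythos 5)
Your proposal is correct and follows essentially the same route as the paper: apply Theorem \ref{barmak} at $v=(u_1,1)$, decompose $st(v)\cap SC(v)$ via Lemma \ref{descompcompl}, identify the pieces and intersections through Lemmas \ref{lemgkn}, \ref{homowcirc}, \ref{homoHcirc} and \ref{lemhomohkn}, assemble with Proposition \ref{homocolimpegadoesferas}, and suspend, with $k=6$ treated separately by direct inspection of the intersection graphs exactly as the paper does. Your dimension bookkeeping and the count $2(n-1)\bigl(h(r-3,n)+(n-1)^{r-1}\bigr)+(n-1)(n-2)h(r-3,n)=n(n-1)h(r-3,n)+2(n-1)^r$ match the paper's computation (and your remark that each $X_i\cap Y_j$ contributes $\bigvee_{h(r-3,n)}\mathbb{S}^{2r-3}$ is in fact cleaner than the paper's displayed subscript at that step).
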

\begin{proof}
For $k\leq5$, we have already see it. For $k=6$, we now that 
$$I(C_6\times K_n)\simeq\Sigma st((u_1,1))\cap SC((u_1,1))$$
where 
$$SC((u_1,1))=\left(\bigcup_{(u_i,n-1)\atop i>1}st((u_i,n-1))\right)\cup\left(\bigcup_{(u_i,2)\atop i>1}st((u_i,2))\right)$$
Making the intersecction we get $K_1,\dots,K_{n-1},L_1,\dots,L_{n-1}$ complexes which are isomorphic to 
$I(G_{2,n})$, which has the homotopy type of an wedge of $n$ $2$-dimentional spheres. The intersection of 
any $K_i$ and $L_i$ is ismomorphic to the independence complex of the graph in Figure \ref{kili}, which has an 
isolated vertex, so $K_i\cap L_i\simeq*$. For $i\neq j$, the complex $K_i\cap L_j$ is isomorphic to the 
independence complex of the graph in Figure \ref{kilj}, which is homotopy equivalent to $\mathbb{S}^1$. The intersection of three or more of 
these complexes is always contractible. Therefore, $st((u_1,1))\cap SC((u_1,1))$ has the hompotopy type of the 
wedge of $2(n-1)$ copies of $\displaystyle\bigvee_{n}\mathbb{S}^2$ with $(n-1)(n-2)$ copies of 
$\mathbb{S}^2$. Therefore 
$$I(C_6\times K_n)\simeq\bigvee_{(n-1)(3n-2)}\mathbb{S}^3$$
For $k=3r$ with $r\geq3$, by the Lemma \ref{descompcompl}, 
$st(v)\cap SC(v)$ is the union of complexes $K_1,\dots,K_{n-1},L_1,\dots,L_{n-1}$, where 
\begin{enumerate}
\item For any $i$, $K_i\cong L_i\cong I(G_{k-4,n})$.
\item For any $i$, $K_i\cap L_i\cong I(\mathring{W}_{k-5,n})$.
\item For any $i\neq j$, $K_i\cap L_j\cong I(\mathring{H}_{k-5,n})$.
\item For any $i\neq j$, $K_i\cap K_j\simeq*\simeq L_i\cap L_j$.
\item For any $X_1,\dots,X_l$, with $l\geq3$ and 
$X_i\in\{K_1,\dots,K_{n-1},L_1,\dots,L_{n-1}\}$, we have
$$\bigcap_{j=1}^lX_j\simeq*$$
\end{enumerate}
So we have $2(n-1)$ copies of $I(G_{3(r-3)+2,n})$, which has the homotopy type of the wedge of 
$h(r-3,n)+(n-1)^{r-1}$ copies of $\mathbb{S}^{2r-2}$, $(n-1)(n-2)$ copies of 
$I(\mathring{H}_{3(r-2)+1,n})$ which has the homotopy type of 
$$\Sigma^2I(H_{3(r-3)+2,n})\simeq\bigvee_{h(r-3,n)(n-2)}\mathbb{S}^{2r-3}$$
and $n-1$ copies of $I(\mathring{W}_{3(r-2)+1,n})\simeq*$. By Proposition \ref{homocolimpegadoesferas} and  
taking its suspension we get that 
$$I(C_{3r}\times K_n)\simeq\bigvee_{n(n-1)h(r-3,n)+2(n-1)^r}\mathbb{S}^{2r-1}$$
\end{proof}

\begin{theorem}\label{teonotmod3}
For $r\geq2$: 
\begin{itemize}
\item[(a)] $\pi_1(I(C_{3r+1}\times K_n))\cong0\cong \pi_1(I(C_{3r+2}\times K_n))$.
\item[(b)] $\tilde{H}_{q}(I(C_{3r+1}\times K_n))\cong0$ for all $q\neq 2r,2r-1$.
\item[(c)] $\tilde{H}_{q}(I(C_{3r+2}\times K_n))\cong0$ for all $q\neq 2r+1,2r$.
\item[(d)] $I(C_{3r+1}\times K_n)$ has the homotopy type of a wedge of $2r$-spheres, $2r+1$-spheres and moore spaces of the type
$M(\mathbb{Z}_m,2r)$.
\item[(e)] $I(C_{3r+2}\times K_n)$ has the homotopy type of a wedge of $2r+1$-spheres, $2r+2$-spheres and moore spaces of the type
$M(\mathbb{Z}_m,2r+1)$.
\end{itemize}
\end{theorem}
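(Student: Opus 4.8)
Fix $v=(u_1,1)$. Since $C_k\times K_n$ is triangle-free for $k\geq4$ and $v$ is non-isolated, Theorem \ref{barmak} gives $I(C_k\times K_n)\simeq\Sigma X$ with $X=st(v)\cap SC(v)$, and for $k\geq7$ Lemma \ref{descompcompl} presents $X$ as the union of the $2(n-1)$ subcomplexes $K_1,\dots,K_{n-1},L_1,\dots,L_{n-1}$. The first step is to feed the relevant indices into Lemmas \ref{lemgkn}, \ref{homowcirc}, \ref{homoHcirc} and \ref{lemhomohkn}: for $k=3r+1$ (resp. $k=3r+2$) each piece $K_i\cong L_i\cong I(G_{k-4,n})$, each $K_i\cap L_i\cong I(\mathring W_{k-5,n})$ and each $K_i\cap L_j\cong I(\mathring H_{k-5,n})$ with $i\neq j$ works out to be a wedge of spheres of one and the same dimension $d=2r-2$ (resp. $d=2r-1$), while $K_i\cap K_j\simeq L_i\cap L_j\simeq\ast$ and every intersection of three or more pieces is contractible. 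The point to stress is that here the double intersections do \emph{not} drop in dimension, so Proposition \ref{homocolimpegadoesferas} no longer applies; this is precisely the feature that lets torsion appear, and it is why the conclusion is only structural.

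\textbf{Homology via the Mayer--Vietoris spectral sequence.}
I would compute $\tilde H_\ast(X)$ with the Mayer--Vietoris (nerve) spectral sequence of the cover $\{K_i,L_j\}$, namely $\tilde E^1_{p,q}=\bigoplus_{|S|=p+1}\tilde H_q\bigl(\bigcap_{s\in S}A_s\bigr)\Rightarrow\tilde H_{p+q}(X)$. Two facts collapse it. First, every intersection over every nonempty subset is nonempty (the triple-and-higher ones are contractible, the relevant doubles are wedges of spheres), so the nerve is the full simplex on $2(n-1)$ vertices and the row $q=0$ is acyclic in reduced homology. Second, the only intersections with nonzero reduced homology are the single pieces and the noncontractible pairs, all concentrated in degree $q=d$. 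Since $d\geq2$, the surviving reduced terms lie only at $(p,q)\in\{(0,d),(1,d)\}$, whence $\tilde E^2=\tilde E^\infty$ and $\tilde H_d(X)=\operatorname{coker}(d^1)$, $\tilde H_{d+1}(X)=\ker(d^1)$, all other reduced groups vanishing, where $d^1$ is the alternating sum of the inclusion-induced maps from the pair-spheres to the piece-spheres. As $\ker(d^1)$ is a subgroup of a free abelian group it is free, while $\operatorname{coker}(d^1)$ may carry torsion. Suspending shifts these into degrees $d+1$ and $d+2$: degrees $2r-1,2r$ for $k=3r+1$ and degrees $2r,2r+1$ for $k=3r+2$, with the possible torsion landing in the lower degree. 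This gives parts (b) and (c).

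\textbf{Simple connectivity and realization of the homotopy type.}
Part (a) is immediate, since both spaces are suspensions of the connected complex $X$ and hence simply connected. For (d) and (e) I would invoke the classification of simply connected CW complexes whose reduced homology is concentrated in two consecutive degrees $m,m+1$ (here $m=d+1\geq3$) with $\tilde H_{m+1}$ \emph{free}: such a space is $(m-1)$-connected, so by the Hurewicz theorem it admits a minimal CW model with $m$-skeleton $\bigvee\mathbb S^m$ and cells only in dimensions $m$ and $m+1$; because $\pi_m(\bigvee\mathbb S^m)$ is free abelian for $m\geq2$, the attaching maps are recorded exactly by the cellular boundary $\partial\colon C_{m+1}\to C_m$. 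Putting $\partial$ into Smith normal form, and realizing the required basis changes by self-homotopy-equivalences of the two wedges, splits the space as $\bigl(\bigvee\mathbb S^m\bigr)\vee\bigl(\bigvee\mathbb S^{m+1}\bigr)\vee\bigl(\bigvee_j M(\mathbb Z_{m_j},m)\bigr)$, with the Moore summands in the lower dimension $m=2r-1$ (resp. $2r$). This is exactly the asserted form.

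\textbf{Main obstacle.}
The genuinely hard issue is not the qualitative shape but the arithmetic behind it: deciding whether Moore summands actually occur amounts to computing $\operatorname{coker}(d^1)$, i.e.\ the Smith normal form of the boundary $d^1$, and I expect no closed form in general, which is why the statement is deliberately left qualitative (``maybe some Moore spaces''). A secondary delicate point is the realization argument in (d) and (e): one must use the freeness of $\tilde H_{m+1}(X)=\ker(d^1)$ to guarantee that the top cells split off as honest spheres rather than being entangled with the torsion of $\operatorname{coker}(d^1)$, and this is exactly where the Smith-normal-form/self-equivalence argument and the hypothesis $m\geq2$ (equivalently $r\geq2$) are essential.
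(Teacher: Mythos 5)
Your proposal is correct, and its skeleton coincides with the paper's: Theorem \ref{barmak} plus the decomposition of Lemma \ref{descompcompl}, with Lemmas \ref{lemgkn}, \ref{homowcirc}, \ref{homoHcirc} and \ref{lemhomohkn} showing that the pieces and the noncontractible double intersections are all wedges of spheres in one and the same dimension $d$ ($d=2r-2$ for $k=3r+1$, $d=2r-1$ for $k=3r+2$), so that everything reduces to the kernel and cokernel of a single map of free abelian groups. The packaging differs mildly: the paper first groups the cover into $X=\bigcup_i X_i$ and $Y=\bigcup_i Y_i$, identifies $X$, $Y$ and $X\cap Y$ as wedges of $d$-spheres via Lemma \ref{lempegcontra}/Proposition \ref{homocolimpegadoesferas}, and then runs one Mayer--Vietoris sequence $0\to\tilde H_{d+1}(X\cup Y)\to\mathbb{Z}^{l}\to\mathbb{Z}^{c}\oplus\mathbb{Z}^{c}\to\tilde H_{d}(X\cup Y)\to 0$; you instead run the Mayer--Vietoris spectral sequence of the full $2(n-1)$-fold cover at once. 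Since triple and higher intersections are contractible and every intersection is connected (so the $q=0$ row is the chain complex of the full simplex on the nerve, acyclic in reduced homology), your $E^1$ page carries exactly the same data --- the doubles contribute $(n-1)^{r}+(n-1)(n-2)h(r-3,n)$ sphere classes and the singles $2(n-1)h(r-2,n)$, matching the paper's $l$ and $2c$ --- and collapses to the same four-term sequence, so the two computations are equivalent. Your handling of (a) is actually simpler than the paper's Seifert--van Kampen argument: the suspension of a connected complex is simply connected, and connectivity of $st(v)\cap SC(v)$ follows since all pieces are connected and pairwise intersections nonempty (state this explicitly). For (d) and (e) you spell out the minimal-cell/Smith-normal-form argument that the paper delegates to Hatcher's Example 4C.2; both are standard, and you correctly isolate where freeness of $\ker(d^1)$ and $m\geq3$ (i.e., $r\geq2$) are used.

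One discrepancy should be flagged, and it is in your favor. Your conclusion puts the spheres in dimensions $2r-1,2r$ with Moore spaces $M(\mathbb{Z}_m,2r-1)$ for $k=3r+1$ (resp.\ $2r,2r+1$ and $M(\mathbb{Z}_m,2r)$ for $k=3r+2$), whereas the statement's (d) and (e) name $2r$- and $(2r+1)$-spheres with $M(\mathbb{Z}_m,2r)$ (resp.\ one dimension higher). As printed, (d)/(e) are inconsistent with the statement's own (b)/(c) --- a wedge containing $(2r+1)$-spheres would have $\tilde H_{2r+1}\neq0$, contradicting (b) --- and with the Mayer--Vietoris output of the paper's proof, which places the possibly-torsion group in the lower of the two surviving degrees. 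So (d)/(e) carry an off-by-one misprint, and your dimensions are the ones that actually follow from the argument.
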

\begin{proof}
From Lemma \ref{descompcompl} and Theorem \ref{barmak}, for $s=1,2$, 
$I(C_{3r+s}\times K_n)\simeq\Sigma(X\cup Y)$, where 
$$X\cong\bigcup_{i=1}^{n-1}X_i,\; \; Y\cong\bigcup_{i=1}^{n-1}Y_i,\; \;X_i\cong I(G_{3r+s-4,n})\cong Y_i$$
and 
$$\bigcap_{i\in S}X_i\simeq*\simeq\bigcap_{i\in S}Y_i$$
for any $S\subset\{1,\dots,n-1\}$ and $|S|\geq2$. 
By Proposition \ref{homocolimpegadoesferas}, Lemmas \ref{descompcompl} and \ref{lemgkn}, 
$$X\simeq \bigvee_{_{(n-1)h(r-2,n)}}\mathbb{S}^{2r-3+s}\simeq Y$$
By the Seifert–van Kampen Theorem (see \cite{rotmantop} Theorem 7.40), $\pi_1(X\cup Y)\cong0$.

Now, for $C_{3r+1}\times K_n$, by Lemma \ref{descompcompl}, 
$$X\cap Y=\bigcup_{1\leq i,j\leq n-1}X_i\cap Y_j$$
where
$$X_i\cap Y_i\cong I(\mathring{W}_{3(r-2)+2,n})$$ 
$$X_i\cap Y_j\cong I(\mathring{H}_{3(r-2)+2,n}) \mbox{ for } i\neq j$$ 
and $(X_i\cap Y_j)\cap(X_r\cap Y_s)\simeq*$. By Proposition \ref{homocolimpegadoesferas}
$$X\cap Y\simeq\left(\bigvee_{(n-2)(n-1)}I\left(\mathring{H}_{3(r-2)+2,n}\right)\right)\vee\left(\bigvee_{n-1}I\left(\mathring{W}_{3(r-2)+2,n}\right)\right)$$
By  Lemmas \ref{lemhomohkn}, \ref{homowcirc} and \ref{homoHcirc}a,
$$X\cap Y\simeq\bigvee_{_{(n-1)^r+(n-2)(n-1)h(r-3,n)}}\mathbb{S}^{2r-2}.$$
Then, by the Mayer-Vietoris sequence, taking $K=X\cup Y$,
\begin{equation*}
\xymatrix{
0 \ar@{->}[r] &\tilde{H}_{2r-1}(K) \ar@{->}[r] & \mathbb{Z}^{l} \ar@{->}[r] &\mathbb{Z}^{d}\oplus\mathbb{Z}^{d} \ar@{->}[r] & \tilde{H}_{2r-2}(K) \ar@{->}[r] & 0
}
\end{equation*}
where $l=(n-1)^r+(n-2)(n-1)h(r-3,n)$ and $d=(n-1)h(r-2,n)$. Therefore 
$\tilde{H}_{q}(K)\cong0$ for $q\neq 2r-1,2r-2$ and taking the suspension we get the result.
For $C_{3r+2}\times K_n$ is analogous.

Parts (d) and (e) follow form the previous parts (see example 4C.2 \citep{hatcher}).
\end{proof}

Proposition \ref{propCnK2} tell us that for any $k$ $I(C_k\times K_2)$ has the homotopy type of a wedge of spheres of the same 
dimension and Theorem 
\ref{teomod3} tell us that for $k=3r$ and any $n$ this is also true, so one can ask what happen for the other $k$'s. For 
$k\not\equiv 0 \pmod{3}$, the last Theorem tell us that the complex may have nontrivial homology only in two consecutive dimensions; and 
by calculations done with Sage we know that for $C_7\times K_3,C_7\times K_4,C_7\times K_5,C_8\times K_3,C_{10}\times K_3,C_{10}\times K_3$, 
their independence complexes have non-trivial free homology groups in these two dimensions. From all this we can ask the following question:

\begin{que}
    Are the homology groups of $I(C_k\times K_n)$ always torsion-free?
\end{que}

An afirmative answer would tell us that $I(C_k\times K_n)$ always has the homotopy type of a wedge of spheres.

\section{Independence Complex of $K_2\times K_n\times K_m$}
In this section we will calculate the homotopy type of $I(K_2\times K_n\times K_m)$. We take the following polynomial.
$$f(l,n,m)=\frac{(l-1)(n-1)(m-1)(lnm-4)}{4}$$

\begin{prop}
$$I(K_2\times K_n\times K_m)\simeq\bigvee_{f(2,n,m)}\mathbb{S}^3$$
\end{prop}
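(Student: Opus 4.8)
The plan is to imitate the star-cluster strategy used for $C_{3r}\times K_n$, exploiting that $G:=K_2\times K_n\times K_m$ is triangle-free: three mutually adjacent vertices would need pairwise distinct first coordinates, impossible in $K_2$. Thus every non-isolated vertex lies in no triangle, and Theorem \ref{barmak} gives $I(G)\simeq\Sigma\bigl(st(v)\cap SC(v)\bigr)$ for $v=(1,1,1)$. The open neighborhood is $N(v)=\{(2,p,q):2\le p\le n,\ 2\le q\le m\}$, of size $(n-1)(m-1)$, so
$$st(v)\cap SC(v)=\bigcup_{(p,q)}K_{p,q},\qquad K_{p,q}:=st(v)\cap st\bigl((2,p,q)\bigr)=I\bigl(G-N[v]-N[(2,p,q)]\bigr).$$
Each graph $G-N[v]-N[(2,p,q)]$ is a bipartite "double cross": in the layer $\{1\}\times[n]\times[m]$ it is the union of row $p$ and column $q$, in the layer $\{2\}\times\cdots$ the union of row $1$ and column $1$, with $(1,i,j)\sim(2,i',j')$ iff $i\neq i'$ and $j\neq j'$. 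The goal is to feed the pieces $K_{p,q}$ into Proposition \ref{homocolimpegadoesferas} with $r=2$.

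First I would establish the homotopy type of a single piece: $K_{p,q}\simeq\bigvee_{n+m-3}\mathbb{S}^2$. By the row/column symmetry of $K_n\times K_m$ all pieces are isomorphic, so it suffices to treat one. I expect this to follow by induction on $n$ (or $m$) via Proposition \ref{cofseq}, peeling off a single $\mathbb{S}^2$ at each step exactly as in the proof that $I(C_5\times K_{n+1})\simeq I(C_5\times K_n)\vee\mathbb{S}^2$; the increment matches $(n+m-3)-\bigl((n-1)+m-3\bigr)=1$. The base cases are explicit: for $(n,m)=(2,2)$ the double cross is $3K_2$ with $I\simeq\mathbb{S}^2$, for $(n,m)=(3,2)$ it is $K_2\sqcup C_6$ with $I\simeq\mathbb{S}^2\vee\mathbb{S}^2$, and for $m=3$ the piece is the graph $G_{2,n}$ of Lemma \ref{lemgkn}, so $K_{p,q}\simeq\bigvee_n\mathbb{S}^2=\bigvee_{n+m-3}\mathbb{S}^2$.

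Next I would compute the intersections and assemble. For $(p,q)\neq(p',q')$ the complex $K_{p,q}\cap K_{p',q'}$ is the independence complex of a further deletion of the double cross, and I claim it is contractible when the two neighbors share a coordinate ($p=p'$ or $q=q'$), since then a vertex of one of the remaining crosses becomes isolated, and is homotopy equivalent to $\mathbb{S}^1$ when $p\neq p'$ and $q\neq q'$; likewise every intersection of three or more pieces is contractible by exhibiting an isolated vertex. These facts are checked by repeated use of Lemma \ref{vecindad} together with the cone argument of Lemma \ref{lempegcontra}, and they match the case $m=3$, where $K_i\cap L_i$ is contractible while $K_i\cap L_j\simeq\mathbb{S}^1$ for $i\neq j$. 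Now the pieces are $\bigvee_{n+m-3}\mathbb{S}^2$ (dimension $r=2$), the pairwise intersections are contractible or $\mathbb{S}^1$ (so $r_2=1\le r-1$), and higher intersections are contractible, so Proposition \ref{homocolimpegadoesferas} shows $st(v)\cap SC(v)$ is a wedge of $2$-spheres: the $(n-1)(m-1)$ pieces give $(n-1)(m-1)(n+m-3)$ copies through $X_1$, the $\tfrac{(n-1)(m-1)(n-2)(m-2)}{2}$ doubly-generic pairs give one suspended $\mathbb{S}^1$ each through $X_2$, and all higher $X_i$ vanish. Using $2(n+m-3)+(n-2)(m-2)=nm-2$, the total is
$$(n-1)(m-1)(n+m-3)+\frac{(n-1)(m-1)(n-2)(m-2)}{2}=\frac{(n-1)(m-1)(nm-2)}{2}=f(2,n,m),$$
and a final suspension yields $I(G)\simeq\bigvee_{f(2,n,m)}\mathbb{S}^3$.

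The main obstacle I anticipate is the single-piece computation $K_{p,q}\simeq\bigvee_{n+m-3}\mathbb{S}^2$: the double-cross graph does not obviously contain a dominated vertex, so setting up the right inductive deletion (or the right auxiliary sub-family, in the spirit of $H_{k,n}$ and $W_{k,n}$) to run Proposition \ref{cofseq} cleanly is the delicate step. By comparison, the contractibility bookkeeping for the multiple intersections should be routine, resting only on locating an isolated vertex in each relevant induced subgraph.
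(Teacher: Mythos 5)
Your outline coincides step for step with the paper's own proof: the same vertex $(1,1,1)$, the same triangle-free observation and application of Theorem \ref{barmak}, the same identification of each piece $st(v)\cap st((2,p,q))$ with the ``double cross'' graph (which the paper calls $Q_{n,m}$), the same classification of pairwise intersections (contractible when the two neighbours agree in the second or third coordinate, $\mathbb{S}^1$ when they differ in both), contractibility of all triple intersections, assembly via Proposition \ref{homocolimpegadoesferas}, and the identical sphere count $(n-1)(m-1)(n+m-3)+\tfrac{(n-1)(m-1)(n-2)(m-2)}{2}=f(2,n,m)$. So the only thing separating your proposal from a complete proof is precisely the step you flag yourself: the claim $I(Q_{n,m})\simeq\bigvee_{n+m-3}\mathbb{S}^2$ is asserted with base cases and a guessed induction pattern, but not proved. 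As written this is a genuine gap, since the whole count rests on it.

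The paper closes it exactly along the lines you anticipated, and the trick you were missing is that although $Q_{n,m}$ has no dominated vertex, one appears after a single deletion. For $m\geq4$, apply Proposition \ref{cofseq} at the vertex $(1,2,3)$ (a generic vertex of the layer-one cross). In $F=Q_{n,m}-N_{Q_{n,m}}[(1,2,3)]$ the vertex $(1,2,1)$ has unique neighbour $(2,1,3)$, so Lemma \ref{vecindad} strips $F$ down to $M_2$ and $I(F)\simeq\mathbb{S}^1$; in $T=Q_{n,m}-(1,2,3)$ one has $N_T((2,1,4))\subseteq N_T((2,1,3))$, so deleting $(2,1,3)$ by Lemma \ref{vecindad} yields $T-(2,1,3)\cong Q_{n,m-1}$. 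Since $I(T)$ is by induction a simply connected wedge of $2$-spheres, the inclusion $I(F)\hookrightarrow I(T)$ is null-homotopic and Proposition \ref{cofseq}(b) gives
$$I(Q_{n,m})\simeq I(Q_{n,m-1})\vee\Sigma\mathbb{S}^1,$$
which is exactly your ``peel off one $\mathbb{S}^2$ per step'' with increment $1$; the induction starts at $Q_{3,3}$, handled by two explicit applications of Proposition \ref{cofseq}. One further minor divergence: the paper invokes Theorem \ref{barmak} only for $n,m\geq3$, disposing of $n\leq2$ or $m\leq2$ directly ($n=2$ gives a join of two copies of $I(K_2\times K_m)$, and $n=3$ reduces to $C_6\times K_m$), whereas you run the star-cluster argument uniformly with small base cases; both work, since for $n=2$ all pairwise intersections share the second coordinate and the union degenerates to the situation of Lemma \ref{lempegcontra}.
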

\begin{proof}
We take $G=K_2\times K_n\times K_m$, then: 
\begin{itemize}
\item If either $n$ or $m$ is equal to $1$, then $f(2,n,m)=0$.
\item If $n=2$ then 
$$I(G)=I(K_2\times K_m)*I(K_2\times K_m)\simeq\bigvee_{(m-1)^2}\mathbb{S}^3$$
and the formula holds. The same is true for $m=2$.
\item If $n=3$, then $f(2,3,m)$ is the formula for $C_6\times K_m$. The same for $m=3$.
\end{itemize} 
Assume that $n,m\geq3$. Because $K_2$ is one of 
the factors in the product, $G$ has no $K_3$ and by Theorem \ref{barmak}
$$I(G)\simeq\Sigma(st((1,1,1))\cap SC((1,1,1)))$$
As before, $\displaystyle st((1,1,1))\cap SC((1,1,1))=\bigcup_{v\in N_G((1,1,1))}(st((1,1,1))\cap st(v))$

\begin{figure}
\centering
\begin{tikzpicture}[line cap=round,line join=round,>=triangle 45,x=2cm,y=2cm]
\clip(-2.5,-1.7) rectangle (2.5,1.7);
\draw (-0.75,1)-- (0.6,0.2);
\draw (0.75,1)-- (-0.6,0.2);
\draw (-0.75,1)-- (0.75,-0.2);
\draw (0.75,1)-- (-0.75,-0.2);
\draw (-0.75,1)-- (0.6,-1);
\draw (0.75,1)-- (-0.6,-1);
\draw (0.6,0.2)-- (-0.75,-0.2);
\draw (-0.6,0.2)-- (0.75,-0.2);
\draw (0.6,0.2)-- (-0.6,-1);
\draw (-0.6,0.2)-- (0.6,-1);
\draw (-0.75,-0.2)-- (0.6,-1);
\draw (0.75,-0.2)-- (-0.6,-1);
\draw (-1.75,0.6)-- (-1.25,1);
\draw (1.75,0.6)-- (1.25,1);
\draw (-1.75,-0.6)-- (-1.25,-0.2);
\draw (1.75,-0.6)-- (1.25,-0.2);
\draw (-1.75,0.6)-- (-1.4,0.2);
\draw (1.75,0.6)-- (1.4,0.2);
\draw (-1.75,-0.6)-- (-1.4,-1);
\draw (1.75,-0.6)-- (1.4,-1);
\draw (-1.75,0)-- (-1.25,1);
\draw (-1.75,0)-- (-1.25,-0.2);
\draw (-1.75,0)-- (-1.4,0.2);
\draw (-1.75,0)-- (-1.4,-1);
\draw (1.75,0)-- (1.25,1);
\draw (1.75,0)-- (1.25,-0.2);
\draw (1.75,0)-- (1.4,0.2);
\draw (1.75,0)-- (1.4,-1);
\draw (-2,0.67) .. controls (-1,2) and (1,2) .. (2,0.67);
\draw (-2,-0.67) .. controls (-1,-2) and (1,-2) .. (2,-0.67);
\draw (-1.75,0)-- (1.75,0);
\begin{scriptsize}
\draw[color=black] (-1,1) node {$(1,2,3)$};
\draw[color=black] (1,1) node {$(2,1,3)$};
\draw[color=black] (-1,0.2) node {$(1,2,m)$};
\draw[color=black] (1,0.2) node {$(2,1,m)$};
\draw[color=black] (-1,-0.2) node {$(1,3,2)$};
\draw[color=black] (1,-0.2) node {$(2,3,1)$};
\draw[color=black] (-1,-1) node {$(1,n,2)$};
\draw[color=black] (1,-1) node {$(2,n,1)$};
\draw[color=black] (1,0.5) node {$\vdots$};
\draw[color=black] (1,0.7) node {$\vdots$};
\draw[color=black] (-1,0.5) node {$\vdots$};
\draw[color=black] (-1,0.7) node {$\vdots$};
\draw[color=black] (1,-0.5) node {$\vdots$};
\draw[color=black] (1,-0.7) node {$\vdots$};
\draw[color=black] (-1,-0.5) node {$\vdots$};
\draw[color=black] (-1,-0.7) node {$\vdots$};
\draw[color=black] (-2,0.6) node {$(2,1,2)$};
\draw[color=black] (-2,-0.6) node {$(2,2,1)$};
\draw[color=black] (2,0.6) node {$(1,2,1)$};
\draw[color=black] (2,-0.6) node {$(1,1,2)$};
\draw[color=black] (-2,0) node {$(2,1,1)$};
\draw[color=black] (2,0) node {$(1,2,2)$};
\end{scriptsize}
\end{tikzpicture}
\caption{$G[S]=Q_{n,m}$}\label{st000st111}
\end{figure}
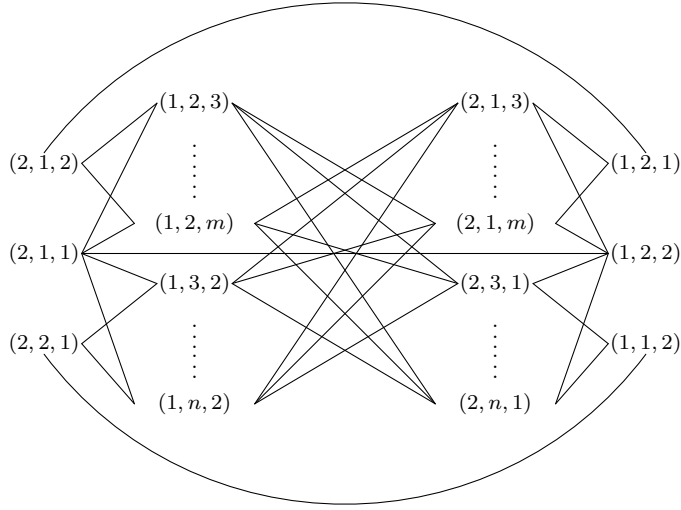

We will first see that each of the complexes of this union is homotopy equivalent to the wedge of 
$n+m-3$ spheres. For $(2,2,2)$, $st((1,1,1))\cap st((2,2,2))=I(G[S])$ with 
$S=V(G)-N_G((1,1,1))\cup N_G((2,2,2))$. Now 
$S=\sigma\cup\tau$, with
$$\sigma=\{(1,2,1),(1,2,2),\dots,(1,2,m),(1,1,2),(1,2,2),\dots,(1,n,2)\}$$
and
$$\tau=\{(2,1,1),(2,1,2),\dots,(2,1,m),(2,1,1),(2,2,1),\dots,(2,n,1)\}$$
therefore, $G[S]$ is the graph in Figure \ref{st000st111}, which we will 
call $Q_{n,m}$. Beacuse $n,m\geq3$, $m+n\geq6$. 
If $m+n=6$, then $m,n=3$ and if we remove the vertex $(2,1,1)$ and its neighbors, we get the disjoint 
union of two copies of $P_3$, therefore $I(Q_{3,3}-N_{Q_{3,3}}((2,1,1))\simeq\mathbb{S}^1$. Now, 
$Q_{3,3}-(2,1,1)$ is isomorphic to $C_8$ plus a vertex $v$ adjacent to two vertices in $C_8$ 
which are at distance $4$; if we remove this vertex and its neighbors we get two disjoint copies of 
$P_3$, therefore, by Proposition \ref{cofseq}, $I(Q_{3,3}-(2,1,1))\simeq\mathbb{S}^2\vee\mathbb{S}^2$. 
Then, again by Proposition \ref{cofseq}, $\displaystyle I(Q_{3,3})\simeq\bigvee_{3}\mathbb{S}^2$. 
Assume that for all $6\leq n+m\leq k$, $\displaystyle I(Q_{n,m})\simeq\bigvee_{n+m-3}\mathbb{S}^2$ and 
take $Q_{n,m}$ such that $n+m=k+1$, without loss of generality assume that $m\geq4$. Now, in 
$F=Q_{n,m}-N_{Q_{n,m}}[(1,2,3)]$ the only neighbor of $(1,2,1)$ is $(2,1,3)$, therefore 
$I(F)\simeq I(F-R)$ with 
$$R=\{(1,2,2),(1,2,4),\dots,(1,2,m),(1,3,1),\dots,(1,n,2)\}$$
and $F-R\cong M_2$, and thus $I(F)\simeq\mathbb{S}^1$. Now, in $T=Q_{n,m}-(1,2,3)$ 
$N_T((2,1,4))\subseteq N_T((2,1,3))$, by Lemma \ref{vecindad}, 
$I(T)\simeq I(T-(2,1,3))$. Because $T-(2,1,3)\cong Q_{n,m-1}$, by the inductive hypothesis, 
$$I(T)\simeq\bigvee_{n+m-4}\mathbb{S}^2,$$
and by Proposition \ref{cofseq}, 
$$I(Q_{n,m})\simeq\bigvee_{n+m-3}\mathbb{S}^2.$$

Now, 
$$(st((1,1,1))\cap st(v))\cap(st((1,1,1))\cap st(u))=st((1,1,1))\cap st(u)\cap st(v)=I(G[A]),$$
with $A=V(G)-N_G((1,1,1))\cup N_G(u)\cap N_G(v)$. There are two possibilities
\begin{itemize}
\item $u$ and $v$ have two coordinates equal. Assume that $u=(2,a,b)$ and $v=(2,a,c)$, with 
$b,c>1$ and $b\neq c$. Take $(2,a,1),(x,y,z)\in A$. If $x=1$, then $y=a$ because $b\neq c$. 
Therefore $(2,a,1)(x,y,z)\notin E(G)$ for all $(x,y,z)\in A$ and $I(G[A])\simeq*$.
\item $u$ and $v$ have only on coordinate equal. Assume $u=(2,a,b)$ and $v=(2,c,d)$, with 
$a\neq c$, $b\neq d$ and $a,b,c,d>1$. Then 
$$A=\{(1,a,d),(1,c,b),(2,1,1),(2,1,2),\dots,(2,1,m),(2,2,1),\dots,(2,n,1)\}$$
In $G[A]$, the only neighborhood of $(2,1,d)$ is $(1,c,b)$ and only one of $(2,c,1)$ is 
$(1,a,d)$, so we can erase all other vertices without changing the homotopy type, and therefore 
$I(G[A])\simeq I(M_2)\cong\mathbb{S}^1$.
\end{itemize}
Therefore, the inclusion of the intersection of two complexes of the union 
$st((1,1,1))\cap SC((1,1,1))$ is null-homotopic. Now, the intersection of three complexes is equal 
to $I(G[D])$ with $D=V(G)-N_g((1,1,1))\cup N_G(u_1)\cap N_G(u_2)\cap N_G(u_3)$. There are two 
possibilities
\begin{itemize}
\item If three vertices have only the first coordinate equal, $(2,a,b),(2,c,d),(2,e,f)$, then
the only vertices with the first coordinate equal to $1$ that are not neighbors of $(2,a,b)$ or 
$(2,c,d)$ are $(1,a,d)$ and $(1,c,b)$, which are neighbors of $(2,e,f)$, therefore 
$$D=\{2\}\times V(K_n)\times V(K_m)$$
and
$$I(G[D])\simeq*.$$
\item If two vertices have two coordinates equal, the intersection is a cone as with only two vertices.
\end{itemize}
Then the union $st((1,1,1))\cap SC((1,1,1))$ achieve the hypothesis of Proposition \ref{homocolimpegadoesferas}. Now 
$(1,1,1)$ has $(n-1)(m-1)$ neighbors and for each neighbor there are another $(n-2)(m-2)$ neighbors 
differing in two coordinates, these pairs are counted twice, therefore 
$st((1,1,1))\cap SC((1,1,1))$ is homotopy equivalent to the wedge of 
$$(n-1)(m-1)(n+m-3)+\frac{(n-1)(m-1)(n-2)(m-2)}{2}=\frac{(n-1)(m-1)(2mn-4)}{4}$$ 
spheres, and taking the suspension we arrive at the result.
\end{proof}
We finish with the following conjecture.
\begin{conj}
$$I(K_n\times K_m \times K_l)\simeq\bigvee_{f(n,m,l)}\mathbb{S}^3$$
\end{conj}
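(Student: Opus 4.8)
The plan is to induct, keeping two of the three factors complete of fixed size and reducing the third. Since $f$ is symmetric in its three arguments, I would fix $l,n\ge 3$ and induct on $m$, with base case $m=2$: here $K_l\times K_n\times K_2\cong K_2\times K_l\times K_n$, whose independence complex is $\bigvee_{f(2,l,n)}\mathbb{S}^3=\bigvee_{f(l,n,2)}\mathbb{S}^3$ by the already proven proposition together with the symmetry of $f$. The cases $l=2$ or $n=2$ (for arbitrary $m$) are likewise exactly the proven proposition. Note that Theorem \ref{barmak} is \emph{not} available here: as soon as $l,n,m\ge 3$ every vertex lies in a triangle (choose three triples that pairwise differ in all coordinates), so the suspension trick that drove the $K_2\times K_n\times K_m$ argument must be replaced by vertex-deletion methods.

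For the inductive step $m-1\to m$, write $G=K_l\times K_n\times K_m$ and let $W=V(K_l)\times V(K_n)\times\{m\}$ be the top sheet. Any two vertices of $W$ share their last coordinate, so $W$ is an independent set, and the induced subgraph $G-W$ is exactly $K_l\times K_n\times K_{m-1}$, whose homotopy type is known by induction. I would rebuild $I(G)$ from $I(G-W)$ by reintroducing the $ln$ vertices of $W$ one at a time, $G_0=G-W$, $G_i=G_{i-1}+w_i$, applying the cofibre sequence of Proposition \ref{cofseq} at each step. The crucial local computation is the homotopy type of $I(G_i-N_{G_i}[w_i])$: if $w_i=(a,b,m)$, then its closed neighbourhood removes a full copy of $K_{l-1}\times K_{n-1}\times K_{m-1}$ from the lower layers and nothing from the sheet (sheet vertices are pairwise non-adjacent), so the deletion complex is the independence complex of the ``cross'' of lower-layer vertices with first coordinate $a$ or second coordinate $b$, together with the previously added sheet vertices. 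The first task is to show, using repeated folding (Lemma \ref{vecindad}) to discard the dominated vertices that appear in this smaller graph, that each such deletion complex is contractible or a wedge of $2$-spheres, and to verify via Proposition \ref{cofseq}(b) that the inclusion $I(G_i-N_{G_i}[w_i])\hookrightarrow I(G_{i-1})$ is null-homotopic, so that the sequence splits as $I(G_i)\simeq I(G_{i-1})\vee\Sigma I(G_i-N_{G_i}[w_i])$.

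Telescoping these splittings gives $I(G)\simeq I(G-W)\vee\bigvee_i\Sigma I(G_i-N_{G_i}[w_i])$, and it then remains to count. The target number of new $3$-spheres is $f(l,n,m)-f(l,n,m-1)=\tfrac12(l-1)(n-1)\bigl(ln(m-1)-2\bigr)$; the appearance of the factor $(l-1)(n-1)$ is the structural clue that, of the $ln$ sheet vertices, essentially those with $a\ne 1$ and $b\ne 1$ (relative to a fixed base layer) should carry the non-contractible deletion complexes, whose sphere counts must sum to $\tfrac12(l-1)(n-1)\bigl(ln(m-1)-2\bigr)$. Choosing the order in which the $w_i$ are added so that the already–present sheet vertices are isolated or dominated in the relevant deletion subgraph — and can therefore be folded away — is what must make the bookkeeping uniform. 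When each deletion complex is itself a union of pieces that are wedges of spheres in consecutive dimensions with contractible mutual intersections, its homotopy type is computed by Proposition \ref{homocolimpegadoesferas}, exactly as the star cluster was assembled in the $K_2\times K_n\times K_m$ proof.

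The hard part, and the reason the three–factor case is genuinely harder than the cases already settled, is the local analysis of $I(G_i-N_{G_i}[w_i])$ in the presence of the accumulating sheet vertices, together with the verification that the gluing data satisfy the hypotheses of Proposition \ref{homocolimpegadoesferas}. Concretely, independent triples such as $\{(1,1,2),(1,2,1),(2,1,1)\}$ — pairwise coordinate-sharing but contained in no single coordinate sheet — are precisely what prevent covering $I(G)$ by sheets and what create the honest $3$–dimensional classes; controlling their contribution is where a clean domination/folding argument is least obvious. A successful execution would also show that the homology is free and concentrated in degree $3$, so that simple connectivity (which I would prove by van Kampen on the sheet cover) upgrades the homology count to the stated wedge of $3$-spheres; this is consistent with, and would settle affirmatively in this case, the torsion-freeness raised in the preceding Question.
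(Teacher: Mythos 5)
First, a point of order: this statement is left as an open conjecture in the paper --- the authors prove only the case $l=2$ --- so there is no proof of record to compare against, and the question is whether your argument settles the conjecture on its own. It does not: what you have written is a program, not a proof. The entire weight of the induction rests on the local computation of $I(G_i-N_{G_i}[w_i])$ for each of the $ln$ sheet vertices, and this is never carried out; you state it as a task (``the first task is to show\dots'') and later concede it is ``the hard part.'' Note moreover that the strategy is rigid: since telescoping Proposition \ref{cofseq}(b) contributes $\Sigma I(G_i-N_{G_i}[w_i])$ as a wedge summand, every deletion complex must be, up to homotopy, contractible or a wedge of $\mathbb{S}^2$'s \emph{exactly} --- any class in degree $3$ or higher would produce spheres of dimension $\geq 4$ and refute, rather than prove, the conjecture --- and the sphere counts must sum to $\tfrac12(l-1)(n-1)\bigl(ln(m-1)-2\bigr)$ under an ordering of the $w_i$ that you never specify. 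None of these three things (dimension bound, ordering, count) is established, and the numerological reading of the factor $(l-1)(n-1)$ is not an argument.

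Some concrete checks on what you do say. The base case and the arithmetic of $f(l,n,m)-f(l,n,m-1)$ are correct, and the first step of your induction does work: for the first sheet vertex $w_1=(a,b,m)$ one has $N_{G_1}((a,b,z))\subseteq N_{G_1}(w_1)$ for any lower layer $z$, so Lemma \ref{vecindad} deletes $w_1$ and $I(G_1)\simeq I(G_0)$. But this domination is destroyed as soon as sheet vertices $(x,y,m)$ with $x\neq a$, $y\neq b$ have been added (they are adjacent to $(a,b,z)$), and from then on the deletion graph is the ``cross'' together with a partial sheet, for which you offer no folding scheme; this is precisely where the proof would have to live. Two further remarks: had you established that every deletion complex is contractible or a wedge of $2$-spheres, the null-homotopy hypothesis of Proposition \ref{cofseq}(b) would be automatic, since inductively $I(G_{i-1})$ would be a wedge of $\mathbb{S}^3$'s and hence $2$-connected --- so that verification is not where the difficulty lies. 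And your closing plan to prove simple connectivity ``by van Kampen on the sheet cover'' contradicts your own (correct) observation that the sheets do not cover $I(G)$: the face $\{(1,1,2),(1,2,1),(2,1,1)\}$ lies in no sheet, so any van Kampen argument would need a genuinely different cover, e.g.\ by stars or star clusters. As it stands the proposal identifies the right obstacles but resolves none of them, so the conjecture remains open under your approach.
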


\bibliographystyle{acm}
\bibliography{tipohomotopias}

\vspace{1cm}

Omar Antol\'in Camarena \hspace{3cm}Andr\'es Carnero Bravo

Instituto de Matem\'aticas,  \hspace{2.75cm}Instituto de Matem\'aticas,

UNAM, Mexico City, Mexico    \hspace{2.25cm}UNAM, Mexico City, Mexico

\hspace{7.15cm}\textit{E-mail address:} \href{mailto:acarnerobravo@gmail.com}{acarnerobravo@gmail.com}
\end{document}